\renewcommand{\O}{\Omega}
\renewcommand{\div}{\normalfont{\text{div}}}
\newcommand{\pr}{\mathbb{P}}
\newcommand{\Ls}{\mathbb{L}}
\newcommand{\Hs}{\mathbb{H}}
\newcommand{\wt}{\widetilde}
\newcommand{\Dom}{\mathcal{O}}
\newcommand{\lb}{\langle}
\newcommand{\rb}{\rangle}
\newcommand{\calL}{{\mathcal L}}
\newcommand{\dd}{\,\mathrm{d}}
\renewcommand{\MR}{\mathrm{MR}}
\newcommand{\UH}{\mathcal{L}_2(U,H)}
\newcommand{\one}{\mathbbm{1}}
\newcommand{\<}{\langle}
\newcommand{\?}{\rangle}
\newcommand{\nn}{|\!|\!|}
\newcommand{\into}{\hookrightarrow}
\DeclareMathOperator*{\esssup}{\textup{ess\,sup}}
\newcommand{\supp}{\mathrm{supp}}
\theoremstyle{plain}
\newtheorem{theorem}{Theorem}[section]
\theoremstyle{remark}
\newtheorem{remark}[theorem]{Remark}
\theoremstyle{plain}
\newtheorem{corollary}[theorem]{Corollary}
\newtheorem{lemma}[theorem]{Lemma}
\newtheorem{proposition}[theorem]{Proposition}
\newtheorem{definition}[theorem]{Definition}
\newtheorem{assumption}[theorem]{Assumption}
\numberwithin{equation}{section}
\def\N{{\mathbb N}}
\def\R{{\mathbb R}}
\newcommand{\E}{\mathbb{E}}
\newcommand{\BB}{\mathcal{B}}
\newcommand{\F}{\mathcal{F}}
\newcommand{\G}{\mathcal{G}}
\renewcommand{\P}{\mathbb{P}}
\newcommand{\PP}{\mathcal{P}}
\newcommand{\om}{\omega}
\newcommand{\Om}{\Omega}
\newcommand{\eps}{\varepsilon}
\begin{document}

\author[E. S. Theewis]{Esm\'ee Theewis}
\address[E. S. Theewis]{Delft Institute of Applied Mathematics\\
Delft University of Technology \\ P.O. Box 5031\\ 2600 GA Delft\\The
Netherlands}
\email{E.S.Theewis@tudelft.nl}

\author[M. C. Veraar]{Mark Veraar}
\address[M. C. Veraar]{Delft Institute of Applied Mathematics\\
Delft University of Technology \\ P.O. Box 5031\\ 2600 GA Delft\\The
Netherlands}
\email{M.C.Veraar@tudelft.nl}

\thanks{The authors are supported by the VICI subsidy VI.C.212.027 of the Netherlands Organisation for Scientific Research (NWO)}

\date{February 20, 2026}

\title[Large Deviations for Stochastic Evolution Equations]{Large Deviations for Stochastic Evolution Equations in the Critical Variational Setting}

\keywords{Large deviation principle, variational methods, stochastic evolution equations, stochastic partial differential equations, quasi- and semilinear, critical nonlinearities}

\subjclass[2020]{Primary: 60H15, Secondary: 60F10, 35K59, 35K90, 35R60, 47J35}

\begin{abstract}
Using the weak convergence approach, we prove the large deviation principle (LDP) for solutions to quasilinear stochastic evolution equations with small Gaussian noise in the critical variational setting, a recently developed general variational framework. No additional assumptions are made apart from those required for well-posedness. In particular, no monotonicity is required, nor a compact embedding in the Gelfand triple. Moreover, we allow for flexible growth of the diffusion coefficient, including gradient noise. This leads to numerous applications for which the LDP was not established yet, in particular equations on unbounded domains with gradient noise. 
Since our framework includes the 2D Navier--Stokes and Boussinesq equations with gradient noise and unbounded domains, our results resolve an open problem that has remained unsolved for over 15 years.  
\end{abstract}

\maketitle
\addtocontents{toc}{\protect\setcounter{tocdepth}{2}}

\tableofcontents

\section{Introduction}
In this paper we study large deviations for solutions to small-noise stochastic evolution equations of the form
\begin{equation}\label{eq: stoch ev eq}
  \dd Y^\eps(t)=-A(t,Y^\eps(t))\dd t+\sqrt{\eps}B(t,Y^\eps(t))\dd W(t)
\end{equation}
in the new variational framework of \cite{AV22variational} by Agresti and the second author. This framework, the \emph{critical variational setting}, has been developed to extend the classical variational approach to stochastic evolution equations originating from \cite{bensoussan72}, \cite{pardoux},  \cite{krylov81}. In the classical variational approach, the drift and diffusion coefficients $A$ and $B$ need to satisfy several conditions to ensure well-posedness of \eqref{eq: stoch ev eq}. The usual weak monotonicity condition is especially restrictive. It is therefore no surprise that efforts have been made to weaken the monotonicity condition, e.g.\ in \cite[\S 5.2]{liurockner15}  and very recently \cite{rockner22} with a much weaker local monotonicity condition. One of the advantages of the critical variational setting of \cite{AV22variational} used in this paper, is that no form of monotonicity is assumed. In return, $A$ and $B$ are of a slightly less (but still very) general form:
\[
A(t,v)=A_0(t,v)v-F(t,v)+f(t), \quad B(t,v)=B_0(t,v)v+G(t,v)+g(t)
\]
for $t\in\R_+$ and $v\in V$, where $(V,H,V^*)$ is a Gelfand triple belonging to the stochastic evolution equation.
That is, $(A,B)$ contains a quasilinear part  $(A_0,B_0)$ and a semilinear part $(F,G)$ and it is assumed that both parts satisfy certain critical local Lipschitz conditions, where the Lipschitz constant may depend arbitrarily on $\|v\|_H$ and, allowing even more flexibility,  polynomially on interpolation norms $\|v\|_{V_\beta}$, where $V_\beta=[V^*,V]_\beta$ denotes the complex interpolation space. Besides the absence of any  monotonicity assumption, another major improvement of the critical variational setting is the weakening of the usual growth conditions on the diffusion coefficient $B$, allowing e.g.\ for gradient noise. Lastly, a special feature is that critical nonlinearities are allowed (see \eqref{eq: critical}), which is not the case in other settings. The critical variational  setting covers many semilinear and some quasilinear equations that were not covered by more classical variational settings. In particular, this holds for many equations that require an (analytically) strong setting, in which monotonicity often fails, for example the Cahn--Hilliard equation, the tamed Navier--Stokes equations and the Allen--Cahn equation. See \cite[\S 5]{AV22variational} for details. The exact assumptions in the critical variational setting can be found in Section \ref{sec: Main result}. Finally, it should be stressed that unlike the settings in \cite{rockner22}, the critical variational setting does not require a compact (Sobolev) embedding $V\into H$ and is thus suited to treat equations on unbounded spatial domains.

The goal of this paper is to establish the LDP for solutions to small-noise stochastic evolution equations in the critical variational setting. Large deviations have been studied for SPDEs in many different frameworks. The first results for SPDEs were inspired by the pioneering paper for SDEs by Freidlin and Wentzell \cite{freidlin12} (see also \cite[\S 5.6]{dembo10}), relying on discretizations and the contraction principle. These techniques were extended to several SPDE settings  with Gaussian noise, notably in \cite{daprato14}, \cite{peszat94}, \cite{cerrai04} (stochastic
reaction-diffusion equations), \cite{chow92} (semilinear parabolic equations) and \cite{rockner06} (stochastic porous media equations). However, for less regular $A$ and $B$ such techniques are difficult to use in general settings.
In 2001, Budhiraja and Dupuis proved a substantially generalized contraction principle, the so-called weak convergence approach to large deviations \cite{budhidupuis01}.  This approach turned out to be extremely powerful for SPDEs and subsequently, it was applied to many SPDEs with less regular coefficients, e.g.\ in \cite{sritharansundar06}, \cite{chueshovmillet10} (2D Navier--Stokes and hydrodynamical models), \cite{duanmillet09} (Boussinesq equations), as well as \cite{renzhang08} and \cite{liu09} (general classical variational settings). A more detailed discussion on applications to fluid dynamics can be found below. 

Also for the recent variational settings with even weaker conditions on the coefficients $A$ and $B$,  the weak convergence approach has led to new LDP proofs. In \cite{hongliliu21} the LDP is obtained for McKean-Vlasov quasilinear stochastic evolution equations, in \cite{pan22} for a setting from \cite{rockner22}, in \cite{kumarmohan22} for the same setting extended to L\'evy noise, and most recently, \cite{pan24} obtained the LDP for the strongest setting of \cite{rockner22}. The latter allows flexible growth bounds on $B$,   including gradient noise. Still, the combination of flexible growth of $B$ and unbounded spatial domains (that is, no compact embedding $V\into H$) has not been covered in any of the papers so far. The main improvement of our work is that we allow for both. In fact, no additional bounds on $A$ and $B$ are assumed for the LDP apart from those in \cite{AV22variational} required for well-posedness, nor do we assume a compact embedding in the Gelfand triple. New techniques are used to replace the usual  compactness arguments. The paper contains new approaches for
\begin{itemize}[label=$-$]
  \item well-posedness of the skeleton equation and  compact sublevel sets of the rate function in the LDP, by means of maximal regularity theory and a strong approximation argument,
  \item the stochastic continuity criterion from the weak convergence approach, using critical estimates for the nonlinearities and an effective combination of deterministic and  stochastic Gronwall inequalities.
\end{itemize}

The LDP result in this paper opens up many new applications.
In particular, the following examples are included on bounded and unbounded domains in $\R^d$ and with gradient noise:
\begin{itemize}[label=$-$]
  \item Navier--Stokes equations for $d=2$ \cite[App.\ A]{AV24SNS},
  \item tamed Navier--Stokes equations for $d=3$ \cite[\S 5.2]{AV22variational},
  \item Cahn--Hilliard equation for $d=1,2$ \cite[\S 5.1]{AV22variational},
  \item Swift--Hohenberg equations for $d=1,2,3$  \cite[\S 5.6]{AV22variational},
 \item many reaction-diffusion equations, e.g.\ for $d\leq 4$:
 \begin{itemize}[label=--]
   \item Allen--Cahn equations \cite[\S 5.4]{AV22variational},
   \item symbiotic Lotka--Volterra equations \cite[Th.\ 3.11]{AV24reacdiff-arXiv},
   \item coagulation equations \cite[Th.\ 3.9]{AV24reacdiff-arXiv}.
 \end{itemize}
 \end{itemize}
 This list is far from extensive.
 
To make our results concrete for some of the models discussed above, we present an application to a general fluid dynamics model in Section \ref{ss:fluid}. 
Specifying further, in Subsection \ref{ss:NS}, we derive the LDP for the 2D Navier--Stokes equations with gradient noise and do not assume that the underlying domain is bounded.  

The LDP for the 2D Navier--Stokes equations with gradient noise and unbounded domains was already considered in the pioneering work \cite{sritharansundar06}. However, the proof of \cite[Lem.\ 4.5]{sritharansundar06} is incomplete -- specifically, the argument in the final line. 
The gap was already indicated in \cite[p.\ 2054]{duanmillet09} and concerns the compactness of the sublevel sets of the rate function. 
In \cite{chueshovmillet10, duanmillet09}, the gap is resolved for the 2D Navier--Stokes and Boussinesq equations under the assumption that the noise is gradient-free (see \cite[Th.\  3.2]{chueshovmillet10}, \cite[Ass.\ A Bis, p.\ 2072]{duanmillet09}), but the gradient noise case has remained open since then.  
The possibly unbounded spatial domains rule out alternative arguments based on compact Sobolev embeddings. 
We have now  covered the gradient noise case in Theorems \ref{thm:fluidabstract} and \ref{thm:SNS} -- extending the results of \cite{chueshovmillet10, duanmillet09} and completing the proof of \cite{sritharansundar06}. 

Another application that we would like to highlight are the 3D tamed Navier--Stokes equations, for which a large deviation principle was established in \cite{RoZhZh}. However, gradient noise was not considered in that work, and it is far from straightforward to extend their approach to settings where such noise is present. 
Our main result, Theorem \ref{th: main LDP theorem},  now includes the gradient noise case and also applies to a broad class of other models (see the list above). 

Closing the above indicated gap requires several intricate approximation techniques, which are detailed in Subsection \ref{ss:weakskeleton}. Furthermore, in the full abstract setting, the stochastic continuity criterion also necessitates new ideas, which we develop in Subsection \ref{ss:stochcont}.  

\subsubsection*{Acknowledgement}
The authors thank Antonio Agresti and Sebastian Bechtel for their helpful comments.

\subsection*{Notation}
We let $\R_+\coloneqq [0,\infty)$.
For $T>0$ and a normed space $X$ we let $C([0,T];X)$ denote the space of continuous functions from $[0,T]$ to $X$ equipped with supremum norm $\|f\|_{C([0,T];X)}\coloneqq \sup_{t\in[0,T]}\|f(t)\|_X$. For $(S,\mathcal{A},\mu)$ a measure space, we denote by $L^0(S;X)$ the space of strongly measurable functions $f\colon S\to X$, with identification of a.e.\ equal functions. For $p\in (0,\infty]$, we let $L^p(S;X)$ denote the subset consisting of all  $f\in L^0(S;X)$ for which $\|f\|_{L^p(S;X)}<\infty$, where
\[
\|f\|_{L^p(S;X)}\coloneqq
\begin{cases}
\left(\int_S\|f(s)\|_X^p \dd \mu(s)\right)^{\frac{1}{p}}, & p<\infty,\\
\esssup_{s\in S}\|f(s)\|_X, & p=\infty.
\end{cases}
\]
We write $L^p(S)\coloneqq L^p(S;\R)$ and if $S=[0,T]\subset \R$, we write $L^p(0,T;X)\coloneqq L^p(S;X)$. Moreover, we let $L^p_{\mathrm{loc}}(\R_+;X)\coloneqq \{u\colon \R_+\to X : u|_{[0,T]}\in L^2(0,T;X) \text{ for all } T\in \R_+\}$.

For Hilbert spaces $U$ and $H$ we let $\mathcal{L}(U,H)$ and $\UH$ denote the continuous linear operators and Hilbert-Schmidt operators from $U$ to $H$, respectively.
For brevity, we write
\[
\nn \cdot\nn_H\coloneqq \|\cdot\|_{\UH}.
\]
Furthermore, we denote the dual of a Hilbert space $V$ by $V^*$ and for $\beta\in(0,1)$, we denote the complex interpolation space at $\beta$ by
\[
V_\beta\coloneqq [V^*,V]_\beta, \quad \|\cdot\|_\beta\coloneqq \|\cdot\|_{V_{\beta}}.
\]

For a metric space $M$ we denote its Borel $\sigma$-algebra by $\BB(M)$. The unique product measure space of two $\sigma$-finite measure spaces $(S_1,\mathcal{A}_1,\mu_1)$ and $(S_2,\mathcal{A}_2,\mu_2)$ is denoted by $(S_1\times S_2,\mathcal{A}_1\otimes \mathcal{A}_2,\mu_1\otimes\mu_2)$.
Let $I=[0,T]$ or $I=\R_+$ and let $X$ be a Banach space. A process $(\Phi(t))_{t\in I}$ is a strongly measurable function $\Phi\colon I\times\Om\to X$. It is called strongly
progressively measurable if for every $t \in I$, $\Phi|_{[0,t]\times \Om}$ is strongly $\mathcal{B}([0, t]) \otimes \F_t$-measurable.  For  $I=\R_+$, we denote the
$\sigma$-algebra generated by the strongly progressively measurable processes by $\PP$.

We write $a\vee b\coloneqq \max(a,b)$ and $a\wedge b\coloneqq \min(a,b)$ for $a,b\in\R$.

\section{Main result}\label{sec: Main result}

We specify our setting for stochastic evolution equations  and recall the definition of the large deviation principle before we state our main result, Theorem \ref{th: main LDP theorem}.

\subsection{The critical variational setting}

We let $(V,H,V^*)$ be a Gelfand triple of real Hilbert spaces. That is, $(V,(\cdot,\cdot)_V)$ and $(H,(\cdot,\cdot)_H)$ are real Hilbert spaces such that there exists a continuous and dense embedding $\iota\colon V\into H$. Then, $j\colon H\into V^*\colon x\mapsto (x, \iota(\cdot))_H$ is a continuous embedding and $j(H)$ is dense in $V^*$ by reflexivity of $V$ ($j=\iota^*$ under Riesz' identification $H\cong H^*$). From now on we identify  $x\in V$ with $\iota(x)\in H$ and $x\in H$ with $j(x)\in V^*$. Then, if $\<\cdot,\cdot\?$ denotes the duality pairing between the abstract dual $V^*$ and $V$, one has
\[
\<x,v\?=(v,x)_H \quad\text{for all } x\in H, v\in V.
\]

For convenience of the reader, we recall that in applications, one does not work with the abstract dual $V^*$ but with a space $V'$ which, under some assumptions and with the correct duality pairing, is isomorphic to $V^*$, see also \cite[p.\ 1244]{krylov81}. One starts with reflexive Banach (or Hilbert) (sub)spaces $V\subset H\subset V'$, where each inclusion is dense and continuous and one defines $j\colon H\into V^*\colon x\mapsto (x, \cdot)_H$. Then, provided that
\begin{equation}\label{eq: j continuous}
|(x,v)_H|\leq \|x\|_{V'}\|v\|_V \quad\text{for all }x\in H, v\in V,
\end{equation}
there exists a unique continuous extension to a map $j_1\colon V'\to V^*$.  Furthermore, if $j_1$ is bijective, then it follows that $j_1\colon V'\cong V^*$ as normed spaces, although not necessarily isometrically. The duality pairing is then given by $\<v',v\?\coloneqq j_1(v')(v)$ and for $x\in H,v\in V$ we have $\<x,v\?=(x,v)_H$ since $j_1$ is the extension of $j$. The triple $(V,H,V')$ is also called a Gelfand triple and simply denoted by $(V,H,V^*)$, where as explained, the correct duality pairing $\<\cdot,\cdot\?\colon V'\times V\to \R$ is given by $\<v',v\?\coloneqq j_1(v')(v)$.

In fact, bijectivity of $j_1$ holds if and only if there exists $\alpha>0$ such that
\begin{equation}\label{eq: bijective criterion}
\alpha\|x\|_{V'}\leq\sup_{v\in V, \|v\|_V\leq 1}|(x,v)_H|\eqqcolon \|j_1(x)\|_{V^*} \quad \text{for all }x\in H.
\end{equation}
The equivalence follows from \cite[Th.\ 4.48]{rynne08}, density of $H\subset V'$ and continuity of $j_1$, and density of $\mathrm{Im}(j_1)\subset V^*$. The latter holds since $j_1(V')\supset j_1(H)=j(H)$ and one can verify that $j(H)$ is dense in $V^*$ using reflexivity of $V$. In conclusion, provided that $V\subset H\subset V'$ continuously and densely, one only has to verify \eqref{eq: j continuous} and \eqref{eq: bijective criterion} to have $j_1\colon V'\cong V^*$.

Popular choices for the Gelfand triple are the weak and strong setting for a given differential operator. For example, if $A(t,u)\coloneqq\Delta u$ on $\R^d$, then one can use
\begin{align*}
&V=H^1(\R^d),\; H=L^2(\R^d),\; V'=H^{-1}(\R^d)= V^*  &\text{(weak setting)},\\
  & V=H^2(\R^d),\; H=H^1(\R^d),\; V'=L^2(\R^d)\cong V^* &\text{(strong setting)}.
\end{align*}
See also \cite[Ex.\ 2.1, Ex.\ 2.2]{AV22variational}.

Recall that $H=[V^*,V]_{\frac{1}{2}}$ \cite[\S 5.5.2]{arendt02} and the following interpolation estimate holds for $\beta\in(\frac{1}{2},1)$:
\begin{equation}\label{eq:interpolation estimate}
\|v\|_\beta\leq K\|v\|_H^{2-2\beta}\|v\|_V^{2\beta-1}, \qquad v\in V.
\end{equation}

Since strong solutions are required to be strongly measurable, see Definition \ref{def:strong sol} below, one can assume without loss of generality that $V$ and $H$ are separable, see also \cite[p.\ 1244]{krylov81}.
Thus, from now on we assume that $V$ and $H$ are separable.

As mentioned in the introduction, we work with the critical variational setting from \cite{AV22variational}.
We consider stochastic evolution equations of the form
\begin{equation}\label{eq: original stoch ev eq}
  \begin{cases}
  &\dd u(t)=-A(t,u(t))\dd t+B(t,u(t))\dd W(t), \quad t\in[0,T], \\
  &u(0)=x,
\end{cases}
\end{equation}
where $x\in H$, $T>0$ and $W$ is a $U$-cylindrical Brownian motion (see Definition \ref{def: cylindrical BM}).

If $\Phi:[0,T]\times \Om\to \UH$ is strongly progressively measurable and $\Phi\in L^2(0,T;\UH)$ a.s., then one can define the stochastic integral $\int_0^t \Phi(s)\dd W(s)$ for $t\in[0,T]$, see \cite[\S 5.4 $(p=0)$]{NVW15}.

We now specify what we mean by a strong solution to \eqref{eq: original stoch ev eq}. In our definition we also  allow for $L^1(0,T;H)$-valued integrands, which is only needed to treat the skeleton equation associated to \eqref{eq: original stoch ev eq}, see Definition \ref{def: skeleton} below.

\begin{definition}\label{def:strong sol}
For $T>0$, we define the \emph{maximal regularity space} by
\[
\MR(0,T)\coloneqq C([0,T];H)\cap L^2(0,T;V), \quad \|\cdot\|_{\MR(0,T)}\coloneqq \|\cdot\|_{C([0,T];H)}+\|\cdot\|_{L^2(0,T;V)}.
\]
Let $A\colon\R_+\times V\to V^*$, $B\colon\R_+\times V\to \UH$ and let $x\in H$. Let $W$ be a $U$-cylindrical Brownian motion on a filtered probability space $(\Om,\mathcal{F},(\mathcal{F}_t)_{t\geq 0},\P)$ and let $T>0$. We say that a strongly progressively measurable process $u\colon [0,T]\times \Om\to V$ is a \emph{strong solution} to \eqref{eq: original stoch ev eq} if a.s.
\[
u\in\MR(0,T),\; A(\cdot,u(\cdot))\in L^2(0,T;V^*)+L^1(0,T;H),\; B(\cdot,u(\cdot))\in L^2(0,T;\UH)
\]
and a.s.
\begin{align}\label{eq:strong sol}
  u(t)=x-\int_0^t A(s,u(s))\dd s+\int_0^t B(s,u(s))\dd W(s) \: \text{ in }V^* \text{ for all }t\in[0,T].
\end{align}
A strong solution $u$ is \emph{unique} if for any other strong solution $v$ we  have a.s. $u=v$ in $\MR(0,T)$.

If $B=0$, we write $u'(t)=-A(t,u(t))$ instead of $\dd u(t)=-A(t,u(t))\dd t$ in \eqref{eq: original stoch ev eq} and we call $u\in \MR(0,T)$ a strong solution  if $A(\cdot,u(\cdot))\in L^2(0,T;V^*)+L^1(0,T;H)$ and \eqref{eq:strong sol} holds.
\end{definition}

For the weak convergence approach to large deviations it is necessary to let $A$ and $B$ be defined on $\R_+\times V$ rather than $\R_+\times\Om\times V$, meaning that stochasticity enters $A$ and $B$  through the solution $u$ in \eqref{eq: original stoch ev eq} and not separately. Also, the initial value $x$ in \eqref{eq: stoch ev eq} has to be deterministic. Other than that, we make exactly the same assumptions as those required for global well-posedness \cite[Th.\ 3.5]{AV22variational}. Let us introduce these assumptions.

\begin{assumption}\label{ass: crit var set}
We assume that:
\begin{enumerate}[label=\emph{(\arabic*)},ref=\ref{ass: crit var set}\textup{(\arabic*)}]
\item \label{it:ass1} $A(t,v)=A_0(t,v)v-F(t,v)-f$ and  $B(t,v)=B_0(t,v)v+G(t,v)+g$, where
\[
A_0\colon \R_+ \times H\to\mathcal{L}(V,V^*) \text{ and } B_0\colon \R_+ \times H \to \mathcal{L}(V,\UH)),
\]
are $\BB(\R_+)\otimes \mathcal{B}(H)$-measurable, and
\[
F\colon \R_+\times V\to V^* \text{ and } G\colon \R_+\times V\to\UH
\]
are $\BB(\R_+)\otimes \mathcal{B}(V)$-measurable, and $f\colon \R_+\to V^*$ and $g\colon \R_+\to \UH$ are $\BB(\R_+)$-measurable maps with
\[
f\in L_{\mathrm{loc}}^2(\R_+;V^*) \text{ and } g\in L_{\mathrm{loc}}^2(\R_+;\UH).
\]

\item \label{it:ass2} For all $T>0$ and $n\in\R_+$, there exist $\theta_{n,T}, M_{n,T}>0$ such that for any $t\in[0,T]$,
$u\in H$, $v\in V$ with $\|u\|_H\leq n$, we have
\[
\<A_0(t,u)v,v\?-\frac{1}{2}\nn B_0(t,u)v\nn_H^2\geq \theta_{n,T}\|v\|_V^2-M_{n,T}\|v\|_H^2.
\]

\item \label{it:ass3} There exist $\rho_j\geq 0$ and $\beta_j\in(\frac{1}{2},1)$ such that 
\begin{flalign}\label{eq: critical}
&\qquad\qquad2\beta_j\leq 1+\frac{1}{1+\rho_j},\quad j\in\{1,\ldots, m_F+m_G\},& \text{\emph{((sub)criticality)}}&
\end{flalign}
for some $m_F, m_G\in\N$ and for all $T>0$, $n\in\R_+$ there exists a constant $C_{n,T}$ such that for all $t\in[0,T]$ and $u,v,w\in V$ with $\|u\|_H,\|v\|_H\leq n$, we have
\begin{align*}
\|A_0(t,u)w\|_{V^*}&\leq C_{n,T}\|w\|_V,\\
\|A_0(t,u)w-A_0(t,v)w\|_{V^*}&\leq C_{n,T}\|u-v\|_H\|w\|_V,\\
\nn B_0(t,u)w\nn_{H}&\leq C_{n,T}\|w\|_V,\\
\nn B_0(t,u)w-B_0(t,v)w\nn_{H}&\leq C_{n,T}\|u-v\|_H\|w\|_V,\\
\|F(t,u)\|_{V^*}&\leq C_{n,T}\sum_{j=1}^{m_F} (1+\|u\|_{\beta_j}^{\rho_j+1}),\\
\|F(t,u)-F(t,v)\|_{V^*}&\leq C_{n,T}\sum_{j=1}^{m_F} (1+\|u\|_{\beta_j}^{\rho_j}+\|v\|_{\beta_j}^{\rho_j})\|u-v\|_{\beta_j},\\
\nn G(t,u)\nn_{H}&\leq C_{n,T}\sum_{j=m_F+1}^{m_F+m_G} (1+\|u\|_{\beta_j}^{\rho_j+1}),\\
\nn G(t,u)-G(t,v)\nn_{H}&\leq C_{n,T}\sum_{j=m_F+1}^{m_F+m_G} (1+\|u\|_{\beta_j}^{\rho_j}+\|v\|_{\beta_j}^{\rho_j})\|u-v\|_{\beta_j}.
\end{align*}
\end{enumerate}
Without loss of generality, we assume that the constants $C_{n,T}$ are non-decreasing in $n$ and $T$.
\end{assumption}

Because the coefficients are defined on $\R_+\times V$ instead of $\R_+\times\Om\times V$, the measurability in Assumption \ref{it:ass1} is different than in \cite[Ass.\ 3.1]{AV22variational}. However, $(A,B)$ satisfies our assumption if and only if $(\bar{A},\bar{B})$ satisfies \cite[Ass.\  3.1]{AV22variational}, where  $\bar{A}(t,\om,v)\coloneqq A(t,v)$ and $\bar{B}(t,\om,v)\coloneqq B(t,v)$ are trivial extensions.

$A_0$ and $B_0$  determine the leading differential order of the drift and the noise, respectively, and constitute the principal part of the quasilinear equation. In the semilinear case, $A_0(t,u)$ and $B_0(t,u)$  do not depend on $u$, rendering the principal part linear.

Condition \eqref{eq: critical} describes a balance between the growth rate $\rho_j+1$ of the nonlinearities $F$ and $G$
and the regularity coefficient $\beta_j$ (whose value is usually determined by Sobolev embeddings). In case of equality in \eqref{eq: critical} for some $j$, the nonlinearity is called \emph{critical}.

From \eqref{eq:interpolation estimate} and Assumption \ref{it:ass3}, it is clear that  $\|F(t,v)\|_{V^*}+\nn G(t,v)\nn_H\leq \tilde{C}_{\|v\|_H,T}(1+\|v\|_V)$ for all $t\in[0,T]$ if $\|v\|_H\leq n$, where $\tilde{C}_{\|v\|_H,T}$ is a constant. Thus we have integrability of $F(\cdot,u(\cdot))$ and $G(\cdot,u(\cdot))$ if $u\in\MR(0,T)$.

In \cite[Th.\ 3.3]{AV22variational} it is shown that under Assumption \ref{ass: crit var set}, there exists a unique local solution to \eqref{eq: original stoch ev eq}. In \cite[Th.\ 3.5]{AV22variational}, this is extended to a global well-posedness result under a  coercivity condition on $(A,B)$.
The next result follows from \cite[Th.\ 3.5]{AV22variational}.
\begin{theorem}\label{th: original stoch ev eq global well-posedness}
Let $(A,B)$ satisfy Assumption \ref{ass: crit var set} and suppose that $(A,B)$ is coercive in the following sense: for all $T>0$, there exist $\theta, M>0$ and $\phi\in L^2(0,T)$ such that for all $v\in V$ and $t\in[0,T]$,
\begin{equation}\label{eq: coercivity condition (A,B)}
\<A(t,v),v\?-\frac{1}{2}\nn B(t,v)\nn_H^2 \geq \theta\|v\|_V^2-M\|v\|_H^2-|\phi(t)|^2.
\end{equation}
Then, for any $x\in H$ and $T>0$, there exists a unique strong solution $u$ to \eqref{eq: original stoch ev eq} on $[0,T]$.
\end{theorem}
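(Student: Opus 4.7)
The plan is to reduce the statement directly to \cite[Th.~3.5]{AV22variational}, since the only difference between the two setups is that here $(A,B)$ is deterministic in $\omega$, i.e.\ defined on $\R_+\times V$ rather than $\R_+\times\Omega\times V$. This is a restriction, not a generalization, so the cited well-posedness result should apply essentially verbatim.

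First, I would introduce the trivial extensions
\[
\bar{A}(t,\omega,v)\coloneqq A(t,v), \qquad \bar{B}(t,\omega,v)\coloneqq B(t,v),
\]
and similarly $\bar{A}_0,\bar{B}_0,\bar{F},\bar{G},\bar{f},\bar{g}$. The point (already observed in the remark immediately following Assumption~\ref{ass: crit var set}) is that these inherit the appropriate progressive/joint measurability from the $\BB(\R_+)\otimes\BB(H)$- and $\BB(\R_+)\otimes\BB(V)$-measurability imposed in \ref{it:ass1}, because a function depending only on $(t,v)$ is automatically progressively measurable in $(t,\omega)$ for any filtration. Thus Assumption~\ref{it:ass1} for $(A,B)$ implies the corresponding measurability hypothesis in \cite[Assumption~3.1]{AV22variational} for $(\bar A,\bar B)$.

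Next I would note that the pointwise conditions \ref{it:ass2}, \ref{it:ass3} and the coercivity bound \eqref{eq: coercivity condition (A,B)} are $\omega$-independent and read off exactly as in \cite[Assumption~3.1 and the coercivity hypothesis of Th.~3.5]{AV22variational} once one replaces $(A,B)$ by $(\bar A,\bar B)$: the constants $\theta_{n,T},M_{n,T},C_{n,T},\theta,M$ and the function $\phi\in L^2(0,T)$ are taken independent of $\omega$, so the quantified estimates transfer trivially (they hold for every $\omega\in\Omega$). Similarly the subcriticality relation \eqref{eq: critical} and the choice of exponents $\rho_j,\beta_j$ are unchanged.

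With all hypotheses of \cite[Th.~3.5]{AV22variational} verified for $(\bar A,\bar B)$ and the deterministic initial datum $x\in H$ (which is a fortiori $\F_0$-measurable), the cited theorem yields for every $T>0$ a unique global strong solution $u\colon[0,T]\times\Omega\to V$, strongly progressively measurable, lying in $\MR(0,T)$ a.s., with $\bar A(\cdot,u(\cdot))\in L^2(0,T;V^*)+L^1(0,T;H)$ and $\bar B(\cdot,u(\cdot))\in L^2(0,T;\UH)$ a.s., and satisfying the integral identity \eqref{eq:strong sol} in $V^*$ for all $t\in[0,T]$. Since $\bar A(s,\omega,u(s,\omega))=A(s,u(s,\omega))$ and analogously for $\bar B$, this is precisely the strong solution in the sense of Definition~\ref{def:strong sol}. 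The one minor point worth checking is that \cite[Th.~3.5]{AV22variational} yields the integrability decomposition $A(\cdot,u(\cdot))\in L^2(0,T;V^*)+L^1(0,T;H)$ used here; but this is exactly the integrability built into its statement (with the $L^1(0,T;H)$ slot absorbing the semilinear part via \eqref{eq:interpolation estimate} as remarked just before the theorem). Thus no genuine new argument is required, only a bookkeeping check of the translation between the two measurability frameworks; I do not expect a substantive obstacle.
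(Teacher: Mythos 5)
Your proposal is correct and matches the paper's (essentially unstated) argument: the paper itself observes, in the paragraph preceding the theorem, that $(A,B)$ satisfies Assumption~\ref{ass: crit var set} if and only if the trivial extensions $\bar A(t,\omega,v)\coloneqq A(t,v)$, $\bar B(t,\omega,v)\coloneqq B(t,v)$ satisfy \cite[Assumption~3.1]{AV22variational}, and then states Theorem~\ref{th: original stoch ev eq global well-posedness} as a direct consequence of \cite[Th.~3.5]{AV22variational}. Your write-up just spells out that bookkeeping.
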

Energy estimates can also be found in \cite[Th.\ 3.5]{AV22variational}, but these will not be used. More general theory in an $L^p$-setting was developed in \cite{AV22nonlinear1,AV22nonlinear2}.

\subsection{Statement of the main result}

\begin{definition}\label{def: LDP}
Let $\mathcal{E}$ be a Polish space, let $(\Om,\F,\P)$ be a probability space and let $(Y^\eps)_{\eps>0}$ be a collection of $\mathcal{E}$-valued random variables on $(\Om,\F,\P)$. Let $I\colon \mathcal{E}\to[0,\infty]$ be a function. Then $(Y^{\eps})$ \emph{satisfies the large deviation principle (LDP) on $\mathcal{E}$} with rate function $I\colon S\to[0,\infty]$ if
\begin{enumerate}[label=\emph{(\roman*)},ref=\textup{(\roman*)}]
    \item $I$ has compact sublevel sets,
    \item for all open $E\subset \mathcal{E}$: $\liminf_{\eps\downarrow0}\eps\log \P(Y^{\eps}\in E)\geq -\inf_{z\in E}I(z)$,
    \item for all closed $E\subset \mathcal{E}$: $\limsup_{\eps\downarrow0}\eps\log \P(Y^{\eps}\in E)\leq -\inf_{z\in E}I(z)$.
\end{enumerate}
\end{definition}

Before we formulate our LDP result, we define  the skeleton equation, which appears in the rate function of our LDP.

\begin{definition}\label{def: skeleton}
Let $x\in H$ be fixed.
For $\psi\in L^2(0,T;U)$, the \emph{skeleton equation} associated to the stochastic evolution equation \eqref{eq: original stoch ev eq} is given by
\begin{equation}\label{eq: skeleton eq}
  \begin{cases}
  &(u^\psi)'(t)=-A(t,u^\psi(t))+B(t,u^\psi(t))\psi(t), \quad t\in[0,T],\\
  &u^\psi(0)=x.
\end{cases}
\end{equation}
\end{definition}

The main theorem of this paper is as follows.

\begin{theorem}\label{th: main LDP theorem}
  Suppose that $(A,B)$ satisfies Assumption \ref{ass: crit var set} and coercivity \eqref{eq: coercivity condition (A,B)}. Let $x\in H$. For $\eps\in(0,1]$, let $Y^\eps$ be the strong solution to
  \begin{equation*}
  \begin{cases}
  &\dd Y^\eps(t)=-A(t,Y^\eps(t))\dd t
  +\sqrt{\eps}B(t,Y^\eps(t))\dd W(t), \quad t\in[0,T], \\
  &u(0)=x,
\end{cases}
\end{equation*}
Then $(Y^\eps)$ satisfies the LDP on $\MR(0,T)$ with rate function $I\colon \MR(0,T)\to [0,+\infty]$ given by
\begin{equation}\label{eq: def rate I}
I(z)=\frac{1}{2}\inf\Big\{\int_0^T\|\psi(s)\|_U^2\dd s : \psi\in L^2(0,T;U), z=u^{\psi}\Big\},
\end{equation}
where $\inf\varnothing\coloneqq +\infty$ and $u^\psi$ is the strong solution to \eqref{eq: skeleton eq}.
\end{theorem}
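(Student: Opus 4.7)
The strategy is the Budhiraja–Dupuis weak convergence approach \cite{budhidupuis01}, which reduces the LDP for $(Y^\eps)$ in $\MR(0,T)$ with rate function $I$ given by \eqref{eq: def rate I} to two sufficient conditions. Setting $S^N\coloneqq \{\psi\in L^2(0,T;U): \|\psi\|_{L^2(0,T;U)}^2\leq N\}$ equipped with the (compact Polish) weak $L^2$-topology, these are: (i) for every $N\in\N$, the level set $K_N\coloneqq\{u^\psi:\psi\in S^N\}$ is compact in $\MR(0,T)$; and (ii) whenever $(\psi^\eps)$ is a family of predictable $S^N$-valued controls with $\psi^\eps\to\psi$ in distribution in $S^N$, the controlled processes
\[
\dd Y^{\eps,\psi^\eps}=-A(t,Y^{\eps,\psi^\eps})\dd t+B(t,Y^{\eps,\psi^\eps})\psi^\eps(t)\dd t+\sqrt{\eps}\,B(t,Y^{\eps,\psi^\eps})\dd W(t),\quad Y^{\eps,\psi^\eps}(0)=x,
\]
converge to $u^{\psi}$ in distribution in $\MR(0,T)$. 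I would carry out these two steps after first constructing $u^\psi$.

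First I would establish well-posedness of the skeleton equation \eqref{eq: skeleton eq} in $\MR(0,T)$ together with a uniform bound $\sup_{\psi\in S^N}\|u^\psi\|_{\MR(0,T)}<\infty$. Observe that $t\mapsto B(t,v(t))\psi(t)$ lies in $L^1(0,T;H)$ for $v\in\MR(0,T)$, which is precisely why Definition \ref{def:strong sol} admits $L^1(0,T;H)$ inhomogeneities. Local existence and uniqueness follow from a deterministic analogue of \cite[Th. 3.3]{AV22variational}, using maximal regularity for $A_0$ and a contraction argument that treats $F$, $f$, $g$, and the controlled drift $B(\cdot,u^\psi)\psi$ as perturbations via Assumption \ref{it:ass3} and interpolation \eqref{eq:interpolation estimate}. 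Global existence and the uniform bound follow from testing with $u^\psi$ in $H$, invoking coercivity \eqref{eq: coercivity condition (A,B)}, applying Young's inequality to $\<B(t,u^\psi)\psi(t),u^\psi\?$ to trade the $\|\psi\|_U$-weight against the $V$-norm, and closing by Gronwall. For compactness of $K_N$, given $\psi_n\rightharpoonup\psi$ weakly in $L^2(0,T;U)$, I would apply the energy identity to $\|u^{\psi_n}-u^\psi\|_H^2$ and use \ref{it:ass2}--\ref{it:ass3} together with \eqref{eq:interpolation estimate} to absorb critical nonlinear differences into the $V$-coercivity. Since $V\into H$ is not assumed compact, the usual Aubin–Lions route is blocked; the key term $\int_0^t(B(s,u^\psi)(\psi_n-\psi),u^{\psi_n}-u^\psi)_H\dd s$ is split into $(B(s,u^\psi)(\psi_n-\psi),u^\psi)_H$, which vanishes by weak convergence of $\psi_n$ and the fact that $B(\cdot,u^\psi)^*u^\psi\in L^2(0,T;U)$, and a remainder absorbed by Gronwall. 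A strong approximation of $A_0,B_0$ (e.g.\ by Yosida-type regularisations preserving \ref{it:ass2}) will be used to justify the It\^o/energy identity rigorously at the level of the limit equation.

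The principal obstacle is condition (ii). Decomposing $Y^{\eps,\psi^\eps}-u^\psi=(Y^{\eps,\psi^\eps}-u^{\psi^\eps})+(u^{\psi^\eps}-u^\psi)$, the second summand vanishes in distribution by step (i) combined with a Skorohod–Jakubowski representation yielding almost-sure weak convergence of $\psi^\eps$ in $S^N$. For the first summand I would apply It\^o's formula to $\|Y^{\eps,\psi^\eps}-u^{\psi^\eps}\|_H^2$, bound the quasilinear and semilinear drift differences by \ref{it:ass2}--\ref{it:ass3} and \eqref{eq:interpolation estimate}, and handle the controlled cross term $(B(s,Y^{\eps,\psi^\eps})-B(s,u^{\psi^\eps}))\psi^\eps(s)$ by factoring out $\|\psi^\eps(s)\|_U$ and using $\psi^\eps\in S^N$ so that $\|\psi^\eps(\cdot)\|_U^2$ provides an integrable pathwise weight. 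The stochastic integral $\sqrt{\eps}\int_0^\cdot B(\cdot,Y^{\eps,\psi^\eps})\dd W$ contributes a martingale part that vanishes as $\eps\downarrow 0$ by BDG. The crucial device, as emphasised in the introduction, is to combine a deterministic Gronwall inequality (with random but pathwise $L^1$-bounded weight $1+\|\psi^\eps(s)\|_U^2$) with a stochastic Gronwall inequality for the martingale contribution, since under critical growth neither alone closes the estimate. Once (i) and (ii) are verified, Theorem \ref{th: main LDP theorem} follows from the general Budhiraja–Dupuis criterion.
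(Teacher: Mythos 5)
Your overall architecture matches the paper: reduction via the weak convergence (Budhiraja--Dupuis) criterion, well-posedness of the skeleton equation via maximal regularity and a contraction argument, and a combination of deterministic and stochastic Gronwall inequalities for the stochastic continuity step. (The paper uses the Matoussi variant, which asks for convergence in probability of $\G^\eps(\tilde W_1+\eps^{-1/2}\int\Psi^\eps)-\G^0(\int\Psi^\eps)$ and thereby avoids the Skorohod--Jakubowski representation you invoke; this is a minor simplification.)

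There is, however, a genuine gap in your compactness step. You split
\begin{equation*}
\int_0^t\bigl(B(s,u^\psi)(\psi_n-\psi),\,u^{\psi_n}-u^\psi\bigr)_H\dd s
=\int_0^t\bigl(B(s,u^\psi)(\psi_n-\psi),\,u^{\psi_n}\bigr)_H\dd s
-\int_0^t\bigl(B(s,u^\psi)(\psi_n-\psi),\,u^{\psi}\bigr)_H\dd s,
\end{equation*}
and only the second integral vanishes, since there $\psi_n-\psi$ is tested against the \emph{fixed} function $s\mapsto B(s,u^\psi(s))^*u^\psi(s)\in L^2(0,T;U)$. The first integral cannot be ``absorbed by Gronwall'': it carries no factor of $\|u^{\psi_n}-u^\psi\|_H$, and it does not go to zero from weak convergence alone because $u^{\psi_n}$ varies with $n$. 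Without a compact embedding $V\into H$ (which is deliberately not assumed), there is no Aubin--Lions route to upgrade the uniform $\MR(0,T)$ bound on $u^{\psi_n}$ into strong $L^2(0,T;H)$ convergence, so the argument does not close.

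The paper resolves exactly this in Lemma~\ref{lem: I_3 to zero}: keep the cross term paired against $w_n\coloneqq u^{\psi_n}-u^\psi$ intact, approximate $b=B(\cdot,u^\psi)$ by Bochner-simple functions to reduce to scalar coefficients, and then use a uniform time-discretization: replace $w_n(s)$ by the piecewise-constant $w_n(\lfloor s/\delta\rfloor\delta)$, so that on each subinterval the weak convergence of $\psi_n$ is tested against a fixed vector, while the discretization error $\|w_n(\cdot)-w_n(\cdot_\delta)\|_{L^2(0,T;H)}$ is shown to be $O(\delta^{1/2})$ \emph{uniformly in $n$} via the chain rule and the uniform $L^1(0,T;V^*)$ bound on $w_n'$. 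This is the ``strong approximation argument'' the paper highlights as a substitute for the usual compactness. Your proposed Yosida regularization of $A_0,B_0$ serves a different purpose (justifying the energy identity, which is actually handled directly by Pardoux's chain rule for $L^2(0,T;V^*)+L^1(0,T;H)$-valued integrands) and does not repair this gap.
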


We have taken $\eps\in (0,1]$ to ensure that $(A,\sqrt{\eps}B)$ satisfies coercivity \eqref{eq: coercivity condition (A,B)}, so that the equation for $Y^\eps$ is well-posed by Theorem \ref{th: original stoch ev eq global well-posedness}.

To have $u^\psi$ appearing in \eqref{eq: def rate I} well-defined, \eqref{eq: skeleton eq} needs to be (globally) well-posed. In Section \ref{sec: skeleton}, we prove that this is the case.
Finally, we recall that the LDP is equivalent to the \emph{Laplace principle} \cite[Def. 1.2.2, Th.\ 1.2.1, Th.\ 1.2.3]{dupuis97}. The weak convergence approach from \cite{budhidupuis01}  offers sufficient conditions for the latter, hence for the LDP. The approach is stated in Subsection \ref{sec: weak convergence approach}, after which we apply it to prove Theorem \ref{th: main LDP theorem} in the remainder of Section \ref{sec: proof LDP}.

\section{Well-posedness of the skeleton equation}\label{sec: skeleton}

Before we turn to large deviations, we prove global well-posedness of the skeleton equation \eqref{eq: skeleton eq} under Assumption \ref{ass: crit var set} and coercivity \eqref{eq: coercivity condition (A,B)}. This is needed, since the solution to \eqref{eq: skeleton eq} appears in the rate function \eqref{eq: def rate I} of the LDP.

Unfortunately, well-posedness cannot be proved at once.
Instead, we first achieve well-posedness of an appropriate linearized version of the skeleton equation in Corollary \ref{cor: skeleton MR linearized crit var setting}, together with a maximal regularity estimate.
Then, we can borrow the strategies from \cite[Chap.\ 18]{HNVWvolume3}, \cite{pruss17addendum}, \cite{pruss18JDE}, \cite{pruss16}.
That is, we use the maximal regularity estimate of Corollary \ref{cor: skeleton MR linearized crit var setting} for the linearized equation in a fixed point argument, yielding existence of a local solution to the skeleton equation in Theorem \ref{th: local well posedness skeleton}. Finally, we extend to a global solution in Theorem \ref{th: global well posedness skeleton}, making use of a blow-up criterion. Uniqueness will be obtained along the way.

\subsection{Linearized skeleton equation}

We consider the following linearization of \eqref{eq: skeleton eq}. We discard the non-linearities $F$ and $G$ and for fixed $w\in L^\infty(0,T;H)$, we consider
\begin{equation}\label{eq: linear skeleton}
\begin{cases}
    &u'(t)+A_0(t,w(t))u(t)-B_0(t,w(t))u(t)\psi(t)=\bar{f}(t)+\bar{g}(t)\psi(t), \\
    &u(0)=x,
\end{cases}
\end{equation}
where $A_0$ and $B_0$ are as in Assumption \ref{ass: crit var set} and $\bar{f}\in L^2(0,T;V^*)$, $\bar{g}\in L^2(0,T;\UH)$.
In this subsection we prove well-posedness of \eqref{eq: linear skeleton} using the method of continuity \cite[Lem.\ 16.2.2]{HNVWvolume3}, together with a suitable maximal regularity estimate. We prove it for more general equations as this does not require any more effort and makes the exposition more transparent. Let us introduce spaces $S$ and $E$ that will be used in the method of continuity.

\begin{definition}\label{def: spaces for method of continuity}
For $T>0$, we let
\[
S\coloneqq L^2(0,T;V^*)+L^1(0,T;H)
\]
be the sum space of the interpolation couple $(L^2(0,T;V^*),L^1(0,T;H))$, where we note that both components embed continuously into the Hausdorff topological vector space $L^1(0,T;V^*)$. The norm on $S$ is given by
\[
\|h\|_S\coloneqq \inf\big\{\|f\|_{L^2(0,T;V^*)}+\|g\|_{L^1(0,T;H)}: h=f+g, f\in L^2(0,T;V^*), g\in L^1(0,T;H \}\big\}.
\]
Note that $S$ is a Banach space \cite[Prop.\ C.1.3]{HNVWvolume1} and $S\into L^1(0,T;V^*)$.
Moreover, we define
\begin{align*}
&E\coloneqq \{u\in \MR(0,T):u \text{ is weakly differentiable, } u'\in S\},\quad \|u\|_E\coloneqq \|u\|_{\MR(0,T)}+\|u'\|_{S}.
\end{align*}
Note that trivially, $E\into \MR(0,T)$.
\end{definition}
Dealing with the sum space $S$ is not standard in part of the literature. However, it is covered excellently in Pardoux' thesis \cite{pardoux}.

The following proposition is a direct consequence of \cite[Th.\ 2.1]{pardoux}.

\begin{proposition}\label{prop: surjectivity L_0}
  Let $\bar{A}\colon [0,T]\to\mathcal{L}(V,V^*)$ be such that
   $[0,T]\to V^*\colon t\mapsto \bar{A}(t)v$ is strongly Borel measurable for all $v\in V$
  and suppose that $a_T\coloneqq \sup_{t\in[0,T]}\|\bar{A}(t)\|_{\mathcal{L}(V,V^*)}<\infty$.
  Suppose that there exists $\theta>0$ such that for all $t\in[0,T]$ and $v\in V$:
  \begin{equation*}
   \<\bar{A}(t)v,v\?\geq \theta \|v\|_V^2.
  \end{equation*}
  Then, for any $h\in S$ and $x\in H$, there exists a unique $u\in E$ satisfying
  \begin{equation}\label{eq: eq bar{A}}
    \begin{cases}
    u'(t)+\bar{A}(t)u(t)=h(t), \quad t\in[0,T],\\
    u(0)=x.
  \end{cases}
  \end{equation}
\end{proposition}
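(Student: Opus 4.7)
As the statement indicates, this proposition is essentially a direct application of \cite[Th. 2.1]{pardoux}, so my plan is to match the two setups and, for completeness, to sketch how the argument goes. The measurability of $t\mapsto \bar A(t)v$, the uniform norm bound $a_T$, and the strict coercivity $\<\bar A(t)v,v\?\geq \theta\|v\|_V^2$ are exactly Pardoux's hypotheses; in fact his coercivity condition is slightly weaker, allowing an extra $-M\|v\|_H^2$ term which we take to be zero here. The solution class $E$ also matches his: any $u\in L^2(0,T;V)$ with $u'\in S$ is automatically in $C([0,T];H)$ by the chain rule in the sum-space setting, so that the trace $u(0)\in H$ makes sense.

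\emph{Uniqueness.} For two solutions $u_1,u_2\in E$ with identical data, $w:=u_1-u_2$ satisfies $w(0)=0$ and $w'+\bar A(t)w=0$. The chain rule in the $S$-setting --- available precisely because $S$ is the sum space of the interpolation couple $(L^2(0,T;V^*),L^1(0,T;H))$ and $V\into H\into V^*$ --- gives $\tfrac{d}{dt}\|w(t)\|_H^2=2\<w',w\?=-2\<\bar A(t)w,w\?\leq -2\theta\|w\|_V^2$, and integration from $0$ forces $w\equiv 0$.

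\emph{Existence.} I would decompose $h=h_1+h_2$ with $h_1\in L^2(0,T;V^*)$ and $h_2\in L^1(0,T;H)$. The $h_1$-piece with initial value $x\in H$ is the classical Lions--Pardoux variational problem and yields $v\in E$ directly. For the $h_2$-piece with zero initial value, I would truncate, setting $h_2^{(n)}:=h_2\one_{\{\|h_2(\cdot)\|_H\leq n\}}\in L^\infty(0,T;H)\subset L^2(0,T;V^*)$, solve for $w_n\in E$ by the classical theory, and pass to $n\to\infty$ using the a priori bound obtained by testing against $w_n$,
\[
\|w_n(t)\|_H^2+2\theta\int_0^t\|w_n(s)\|_V^2\,\dd s\leq 2\int_0^t \|h_2^{(n)}(s)\|_H\|w_n(s)\|_H\,\dd s,
\]
which by a Gronwall-type argument yields $\|w_n\|_{L^\infty(0,T;H)}\lesssim \|h_2\|_{L^1(0,T;H)}$ and, via coercivity, a uniform bound on $\|w_n\|_{\MR(0,T)}$; the bound on $\|w_n'\|_S$ then follows from reading off the equation. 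Completeness of $E$ together with a weak-compactness/diagonal extraction produces the limit $w\in E$, and $u:=v+w$ solves the full problem.

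The only genuine obstacle --- and the reason one must invoke Pardoux's framework rather than the elementary $L^2(0,T;V^*)$ theory --- is precisely the $L^1(0,T;H)$ component of $h$: it cannot be absorbed into the $V^*$--$V$ duality, which forces all energy estimates to be run through the $H$-pairing and closed with Gronwall. Once this step is in place, every remaining ingredient (chain rule, coercive testing, passage to the limit) is standard.
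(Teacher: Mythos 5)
Your proof is correct and matches the paper's approach exactly: the paper proves this proposition by a direct citation of Pardoux's Theorem 2.1, and your reduction to that theorem (strict coercivity as the $M=0$ special case, the sum space $S$ as the correct target for $u'$) is precisely the content of that remark, with your sketch faithfully reproducing the mechanism behind it. One small imprecision in the final step of your sketch: ``weak-compactness/diagonal extraction'' is not the right tool for passing $w_n\to w$, since the sum space $S$ contains the non-reflexive component $L^1(0,T;H)$; the clean argument is to apply your linear a priori estimate to the differences $w_n-w_m$ (whose data $h_2^{(n)}-h_2^{(m)}\to 0$ in $L^1(0,T;H)$), conclude $(w_n)$ is Cauchy in $\MR(0,T)$ and hence, after reading $w_n'$ off the equation, Cauchy in $E$, and invoke completeness of $E$ --- which you already gesture at, so this is a wording issue rather than a gap.
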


We will need an extension of Proposition \ref{prop: surjectivity L_0} with the coercivity condition replaced by the weaker condition \eqref{eq: coercivity L_1} below. As a preparation, we first prove a maximal regularity estimate.

\begin{lemma}\label{lem: a priori}
Let $\bar{A}\colon [0,T]\to\mathcal{L}(V,V^*)$ be such that
$\bar{A}(\cdot)u(\cdot)\in S$ for any $u\in\MR(0,T)$.
Suppose that there exist $\theta>0$ and $M\in L^1(0,T)$, $M\geq 0$ such that for all $t\in[0,T]$ and $v\in V$:
\begin{equation}\label{eq: coercivity L_1}
\<\bar{A}(t)v,v\?\geq \theta \|v\|_V^2-M(t)\|v\|_H^2.
\end{equation}
Let $h\in S$ and $x\in H$ and suppose that $u\in \MR(0,T)$ is a strong solution to \eqref{eq: eq bar{A}}.
Then
\begin{equation}\label{eq: a priori}
\|u\|_{\MR(0,T)}\leq C_{\theta}\exp(2\|M\|_{L^1(0,T)})\big(\|h\|_S+\|x\|_H\big),
\end{equation}
for a constant $C_{\theta}>0$ depending only on $\theta$.
\end{lemma}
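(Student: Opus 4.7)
The plan is to derive an energy identity for $\|u(\cdot)\|_H^2$ using a sum-space chain rule, then apply coercivity and Gronwall's inequality.

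First I would observe that since $u \in \MR(0,T)$ is a strong solution and $\bar A(\cdot)u(\cdot), h \in S$, the equation gives $u' = h - \bar A(\cdot)u(\cdot) \in S$, so in fact $u \in E$. For such $u$ the chain rule in the sum-space framework (covered in Pardoux's thesis, as noted above) applies: for any decomposition $u'=F+G$ with $F\in L^2(0,T;V^*)$ and $G\in L^1(0,T;H)$,
\[
\tfrac{1}{2}\|u(t)\|_H^2 \;=\; \tfrac{1}{2}\|x\|_H^2 + \int_0^t \<F(s),u(s)\?\dd s + \int_0^t (G(s),u(s))_H \dd s, \qquad t\in[0,T].
\]
Next, fix an arbitrary decomposition $h=f+g$ with $f\in L^2(0,T;V^*)$ and $g\in L^1(0,T;H)$, and apply the identity with $F\coloneqq f-\bar A(\cdot)u(\cdot)$ and $G\coloneqq g$. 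Using the weak coercivity \eqref{eq: coercivity L_1}, this yields
\[
\tfrac{1}{2}\|u(t)\|_H^2 + \theta\!\int_0^t\!\|u(s)\|_V^2\dd s
\;\leq\; \tfrac{1}{2}\|x\|_H^2 + \int_0^t\! M(s)\|u(s)\|_H^2\dd s + \int_0^t\!\<f(s),u(s)\?\dd s + \int_0^t\!(g(s),u(s))_H\dd s.
\]

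I would then estimate the $f$-term by Cauchy--Schwarz and Young's inequality, producing $\tfrac{1}{2\theta}\|f\|_{L^2(0,T;V^*)}^2 + \tfrac{\theta}{2}\int_0^t\|u(s)\|_V^2\dd s$ (the second part gets absorbed on the left). For the $g$-term I would bound the inner product pointwise by $\|g(s)\|_H \sup_{r\leq t}\|u(r)\|_H$, obtaining $\|g\|_{L^1(0,T;H)}\sup_{r\leq t}\|u(r)\|_H$, and then split this using $ab\leq \tfrac{1}{4}a^2+b^2$ so that a factor of $\tfrac{1}{4}\sup_{r\leq t}\|u(r)\|_H^2$ can also be absorbed after taking the supremum on the left. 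Taking $\sup_{r\in[0,t]}$ of both sides gives
\[
\tfrac{1}{4}\sup_{r\leq t}\|u(r)\|_H^2 + \tfrac{\theta}{2}\int_0^t\|u(s)\|_V^2\dd s \;\leq\; C_\theta\bigl(\|x\|_H^2 + \|f\|_{L^2(0,T;V^*)}^2 + \|g\|_{L^1(0,T;H)}^2\bigr) + \int_0^t M(s)\sup_{r\leq s}\|u(r)\|_H^2\dd s,
\]
and Gronwall's lemma applied to $\phi(t)\coloneqq \sup_{r\leq t}\|u(r)\|_H^2$ yields
\[
\sup_{r\leq T}\|u(r)\|_H^2 + \int_0^T\|u(s)\|_V^2\dd s \;\leq\; C_\theta'\exp\bigl(4\|M\|_{L^1(0,T)}\bigr)\bigl(\|x\|_H^2 + \|f\|_{L^2(0,T;V^*)}^2 + \|g\|_{L^1(0,T;H)}^2\bigr).
\]
Taking square roots, using $\sqrt{a^2+b^2+c^2}\lesssim a+b+c$, and taking the infimum over decompositions $h=f+g$ replaces $\|f\|_{L^2(0,T;V^*)}+\|g\|_{L^1(0,T;H)}$ by $\|h\|_S$, giving \eqref{eq: a priori}.

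The main obstacle is the chain rule for $u\in E$ when $u'$ lies in the sum space $S$ rather than just $L^2(0,T;V^*)$; once this is available (by citation to Pardoux), the remainder is a routine energy--Gronwall argument, with the only delicate bookkeeping being the splitting of $\|g\|_{L^1(0,T;H)}\sup_{r\leq t}\|u(r)\|_H$ so that the $\sup$-quantity on the left can be isolated before invoking Gronwall.
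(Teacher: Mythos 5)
Your argument is essentially the paper's own proof: apply the Pardoux sum-space chain rule to $u' = h - \bar A(\cdot)u(\cdot)$, split $h = f + g$, estimate the $f$-term by Young and the $g$-term by $\|g\|_{L^1(0,T;H)}\sup_{r\le t}\|u(r)\|_H$, absorb the $\sup$-term, and close with Gronwall followed by an infimum over decompositions. One minor imprecision worth noting: since $\bar A(\cdot)u(\cdot)$ is only in $S$, your $F := f - \bar A(\cdot)u(\cdot)$ need not lie in $L^2(0,T;V^*)$; the correct invocation (as in the paper) is the chain rule for $u' = v \in S$ giving $\int_0^t\langle v,u\rangle$, after which one regroups $v = (f+g) - \bar A(\cdot)u(\cdot)$ inside the duality pairing — the ensuing estimates are unchanged.
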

\begin{proof}
Write $h=f+g$ with $f\in L^2(0,T;V^*)$, $g\in L^1(0,T;H)$.
We apply \cite[Th.\ 2.2]{pardoux}. Since $u\in\MR(0,T)$ is a strong solution, we have $u(t)=x+\int_0^t v(s)\dd s$ with $v\coloneqq h(\cdot)-\bar{A}(\cdot)u(\cdot)\in S=L^2(0,T;V^*)+L^1(0,T;H)$.
Hence, the chain rule \eqref{eq: Ito pardoux deterministic} and \eqref{eq: coercivity L_1} yield for all $t\in[0,T]$:
\begin{align}
\|u(t)\|_H^2 &=\|x\|_H^2+2\int_0^t \<h(s),u(s)\?\dd s-2\int_{0}^{t} \<\bar{A}(s)u(s),u(s)\?\dd s\notag\\
&\leq \|x\|_H^2+2\int_0^t \<h(s),u(s)\?\dd s-2\theta\|u\|_{L^2(0,t;V)}^2+2\int_0^t M(s)\|u(s)\|_H^2\dd s.\label{eq: gronwall prep bar{A}}
\end{align}
Note that by Young's inequality, we have for all $s\in[0,t]$:
\begin{align*}
\<h(s),u(s)\?=\<f(s),u(s)\?+\<g(s),u(s)\?
&\leq \|u(s)\|_{V}\|f(s)\|_{V^*}+\|u(s)\|_{H}\|g(s)\|_{H}\\
&\leq \frac{\theta}{2}\|u(s)\|_V^2+\frac{1}{2\theta}\|f(s)\|_{V^*}^2+\sup_{r\in[0,t]}\|u(r)\|_H\|g(s)\|_H.
\end{align*}
Entering this into \eqref{eq: gronwall prep bar{A}} we obtain for all $0\leq t\leq t_1\leq T$:
\begin{align*}
\|u(t)\|_H^2+\theta\|u\|_{L^2(0,t;V)}^2&\leq \|x\|_H^2+\frac{1}{\theta}\|f\|_{L^2(0,T;V^*)}^2 +2\sup_{r\in[0,t_1]}\|u(r)\|_H\|g\|_{L^1(0,T;H)}\\
&\qquad+\int_0^{t_1} 2M(s)\|u(s)\|_H^2\dd s\\
\leq \|x\|_H^2+\frac{1}{\theta}\|f\|_{L^2(0,T;V^*)}^2 &+\frac{1}{2}\sup_{r\in[0,t_1]}\|u(r)\|_H^2+2\|g\|_{L^1(0,T;H)}^2 +\int_0^t 2M(s) \|u(s)\|_H^2  \dd s.
\end{align*}
Hence, taking $\sup_{t\in[0,t_1]}$ in the above and writing $F(t_1)\coloneqq \frac{1}{2}\sup_{t\in[0,t_1]}\big(\|u(t)\|_H^2+\theta\|u\|_{L^2(0,t;V)}\big)$ gives for all $0\leq t_1\leq T$:
\begin{align*}
F(t_1)&\leq 2F(t_1)-\frac{1}{2}\sup_{r\in[0,t_1]}\|u(r)\|_H^2\\
&\leq \|x\|_H^2+\frac{1}{\theta}\|f\|_{L^2(0,T;V^*)}^2 +2\|g\|_{L^1(0,T;H)}^2 +\int_0^{t_1} 2M(s) \|u(s)\|_H^2  \dd s\\
&\leq \|x\|_H^2+\frac{1}{\theta}\|f\|_{L^2(0,T;V^*)}^2 +2\|g\|_{L^1(0,T;H)}^2 +\int_0^{t_1} 4M(s) F(s)  \dd s,
\end{align*}
so by Gronwall's inequality, we obtain
\[
\|u\|_{C([0,T];H)}^2+\theta\|u\|_{L^2(0,T;V)}^2\leq 4F(T)\leq 4\big(\|x\|_H^2+\frac{1}{\theta}\|f\|_{L^2(0,T;V^*)}^2 +2\|g\|_{L^1(0,T;H)}^2\big)\exp(4\|M\|_{L^1(0,T)}).
\]
Thus
\begin{align*}
  \|u\|_{\MR(0,T)}^2 &\leq (1\vee\theta^{-1})4\big(\|x\|_H^2+(\theta^{-1}\vee2)(\|f\|_{L^2(0,T;V^*)} + \|g\|_{L^1(0,T;H)})^2\big)\exp(4\|M\|_{L^1(0,T)}).
\end{align*}
Since $f$ and $g$ with $h=f+g$ were arbitrary, taking the infimum over $\{(f,g)\in L^2(0,T;V^*)\times L^1(0,T;H):h=f+g\}$ gives
\begin{align*}
  \|u\|_{\MR(0,T)}^2 &\leq C_{\theta}^2  \big(\|x\|_H^2+\|h\|_{S}^2\big)\exp(4\|M\|_{L^1(0,T)}).
\end{align*}
where $C_\theta\coloneqq \big(4(1\vee\theta^{-1})(\theta^{-1}\vee2)\big)^{\frac{1}{2}}$. Taking square roots on both sides yields \eqref{eq: a priori}.
\end{proof}

We now prove Proposition \ref{prop: surjectivity L_0} under the weaker coercivity \eqref{eq: coercivity L_1}.

\begin{theorem}\label{th: well-posed with M in L^1}
Let $\bar{A}\colon [0,T]\to\mathcal{L}(V,V^*)$ and suppose that
for all $u\in\MR(0,T)$:
  \begin{align}\label{eq: bar{A} bdd MR(0,T) to S}
  \bar{A}(\cdot)u(\cdot)\in S, \qquad \|\bar{A}(\cdot)u(\cdot)\|_S\leq \alpha \|u\|_{\MR(0,T)},
  \end{align}
  for some constant $\alpha>0$ independent of $u$.
  Suppose that coercivity \eqref{eq: coercivity L_1} is satisfied for some $\theta>0$ and $M\in L^1(0,T)$. Then for any $h\in S$, there exists a unique strong solution  $u\in \MR(0,T)$ to \eqref{eq: eq bar{A}}. Moreover, the estimate  \eqref{eq: a priori} holds.
\end{theorem}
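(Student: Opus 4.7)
The plan is to combine the a priori estimate already in hand with the method of continuity from \cite[Lem.~16.2.2]{HNVWvolume3}. Uniqueness and the estimate \eqref{eq: a priori} come essentially for free: if $u_1,u_2 \in \MR(0,T)$ are two strong solutions to \eqref{eq: eq bar{A}} with the same data $(h,x)$, then their difference is a strong solution corresponding to $(0,0)$, so Lemma \ref{lem: a priori} forces $u_1 = u_2$ in $\MR(0,T)$. The same lemma, applied to $u$ itself, yields \eqref{eq: a priori}.

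For existence I would set up the method of continuity between $\bar A$ and a reference operator that falls under Proposition \ref{prop: surjectivity L_0}. Let $J \colon V \to V^*$ be the canonical isomorphism given by $\<Jv, w\? = (v,w)_V$, so that $\|J\|_{\mathcal{L}(V,V^*)}=1$ and $\<Jv,v\? = \|v\|_V^2$, and define
\begin{equation*}
\bar A_\lambda(t) \coloneqq (1-\lambda) J + \lambda \bar A(t), \qquad \lambda \in [0,1].
\end{equation*}
A short check shows that each $\bar A_\lambda$ satisfies the hypotheses of Lemma \ref{lem: a priori} with constants independent of $\lambda$: convexity and \eqref{eq: coercivity L_1} give the lower bound $\<\bar A_\lambda(t) v, v\? \geq (1\wedge \theta) \|v\|_V^2 - M(t)\|v\|_H^2$, and \eqref{eq: bar{A} bdd MR(0,T) to S} together with $\|Ju(\cdot)\|_{L^2(0,T;V^*)} \leq \|u\|_{\MR(0,T)}$ gives $\bar A_\lambda(\cdot)u(\cdot) \in S$ with $\|\bar A_\lambda(\cdot)u(\cdot)\|_S \leq (1+\alpha)\|u\|_{\MR(0,T)}$.

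With this in place I introduce the bounded linear maps
\begin{equation*}
L_\lambda \colon E \to S \times H, \qquad L_\lambda u \coloneqq (u' + \bar A_\lambda(\cdot) u(\cdot),\, u(0)),
\end{equation*}
where boundedness uses the preceding $S$-bound and the continuous embedding $E \into \MR(0,T) \into C([0,T];H)$. Lemma \ref{lem: a priori} applied to $\bar A_\lambda$ gives $\|u\|_{\MR(0,T)} \leq C(\|h\|_S + \|x\|_H)$ with $C$ independent of $\lambda$; and since $u' = h - \bar A_\lambda(\cdot) u(\cdot)$, the same bound controls $\|u'\|_S$, yielding a uniform two-sided estimate $\|u\|_E \leq C'\|L_\lambda u\|_{S\times H}$. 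For $\lambda = 0$ the operator $\bar A_0 = J$ is constant, coercive with constant $1$, and $\sup_{t \in [0,T]} \|J\|_{\mathcal{L}(V,V^*)} = 1 < \infty$, so Proposition \ref{prop: surjectivity L_0} provides bijectivity of $L_0$. The method of continuity then promotes bijectivity to all $\lambda \in [0,1]$, in particular to $L_1$, which is exactly existence for $\bar A$.

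The main obstacle I anticipate is bookkeeping rather than a genuine difficulty: one has to be careful to verify that the $S$-bound on $\bar A_\lambda(\cdot) u(\cdot)$ transfers to a uniform estimate in the full $E$-norm (and not merely the $\MR(0,T)$-norm) in order to run the method of continuity with $L_\lambda$ valued in $S \times H$. This is handled exactly by reading off $\|u'\|_S$ from the equation after bounding $\|u\|_{\MR(0,T)}$ by Lemma \ref{lem: a priori}, using that the constants there depend only on $\theta$ and $\|M\|_{L^1(0,T)}$, neither of which varies with $\lambda$.
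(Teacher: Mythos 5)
Your proposal is correct and follows essentially the same route as the paper: method of continuity between $\bar A$ and a coercive Riesz-type reference operator, with the a priori estimate of Lemma \ref{lem: a priori} supplying both the $\lambda$-uniform bound $\|u\|_E\lesssim\|L_\lambda u\|_{S\times H}$ and uniqueness. The only (cosmetic) differences are that the paper uses $\theta J$ rather than $J$ as the reference operator so the coercivity constant stays $\theta$ rather than $1\wedge\theta$, and the paper spells out the Lipschitz continuity of $\lambda\mapsto L_\lambda$, which you leave implicit.
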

\begin{proof}
We use the method of continuity \cite[Lem.\ 16.2.2]{HNVWvolume3}.
Define $A_0\in\mathcal{L}(V,V^*)$ by $A_0v\coloneqq \theta(\cdot,v)_V$.
For $\lambda\in[0,1]$, put
\begin{align*}
&A_\lambda\colon [0,T]\to\mathcal{L}(V,V^*)\colon t\mapsto(1-\lambda){A}_0+\lambda\bar{A}(t), \\
& L_\lambda\colon E\to S\times H\colon \big(L_\lambda u\big)\coloneqq  \big(u'(\cdot)+A_\lambda(\cdot)(u(\cdot)),u(0)\big).
\end{align*}
Clearly, $L_\lambda$ is linear. We show that $L_\lambda\in \mathcal{L}(E,S\times H)$ and that
$[0,1]\to \mathcal{L}(E,S\times H)\colon \lambda\mapsto L_\lambda$ is continuous.

Let $u\in E$ be arbitrary. For all $t\in[0,T]$ we have $A_0u(t)=\theta\<\cdot,u(t)\?\in V^*$, so by the Riesz isomorphism, $\|A_0u(t)\|_{V^*}=\theta\|u(t)\|_V$. Since $u\in L^2(0,T;V)$, it follows that $A_0u(\cdot)\in L^2(0,T;V^*)\subset S$ and
\begin{align*}
\|{A}_0 u(\cdot)\|_S&\leq \|{A}_0 u(\cdot)\|_{L^2(0,T;V^*)}
=\theta\| u\|_{L^2(0,T;V)}\leq \theta\| u\|_{\MR(0,T)}
\end{align*}
Combining with \eqref{eq: bar{A} bdd MR(0,T) to S} gives ${A}_\lambda(\cdot)u(\cdot)\in S$ and
\begin{equation}\label{eq: est A_lambda}
\|{A}_\lambda(\cdot)u(\cdot)\|_S\leq (1-\lambda)\|A_0u(\cdot)\|_S+\lambda\|\bar{A}(\cdot)u(\cdot)\|_S\leq ( \theta+ \alpha)\|u\|_{\MR(0,T)}.
\end{equation}
Note that $u'\in S$ and $\|u'\|_S\leq\|u\|_E$ by definition of $E$. Moreover, $E\into \MR(0,T)$, thus
\begin{align*}
\|L_\lambda u\|_{S\times H}&\leq \|u'\|_S+\|{A}_\lambda(\cdot)u(\cdot)\|_S+\|u(0)\|_H\\
&\leq \|u\|_E+( \theta+ \alpha)\|u\|_{\MR(0,T)}+\|u\|_{C([0,T];H)}\\
&\leq (2+\theta+\alpha)\|u\|_E,
\end{align*}
proving $L_\lambda\in \mathcal{L}(E,S\times H)$. Moreover, we have for any $\lambda,\mu\in [0,1]$ and $u\in E$:
\begin{align*}
\|(L_\lambda-L_\mu) u\|_{S\times H}&=\|\big((\mu-\lambda)A_0u(\cdot)+(\lambda-\mu)\bar{A}(\cdot)u(\cdot),0\big)\|_{S\times H}\\
&\leq |\mu-\lambda|\|A_0u(\cdot)\|_S+|\lambda-\mu|\|\bar{A}(\cdot)u(\cdot)\|_S\\
&\leq |\mu-\lambda|(\theta+\alpha)\|u\|_{\MR(0,T)}\\
&\leq |\mu-\lambda|(\theta+\alpha)\|u\|_E,
\end{align*}
i.e. $\|L_\lambda-L_\mu\|_{\mathcal{L}(E,S\times H)}\leq |\mu-\lambda|(\theta+\alpha)$. Thus $\lambda\mapsto L_\lambda$ is (Lipschitz) continuous.

Next, we verify that $\|u\|_E\leq K\|L_\lambda u\|_{S\times H}$ for some $K>0$ independent of $\lambda$.
Note that $A_\lambda$ satisfies all conditions of Lemma \ref{lem: a priori}. Coercivity \eqref{eq: coercivity L_1} holds since
\begin{align*}
\<v,A_\lambda(t)v\?=(1-\lambda)\<v,{A}_0v\?+\lambda\<v,\bar{A}(t)v\?&\geq (1-\lambda)\theta\|v\|_V^2+\lambda (\theta \|v\|_V^2-|M(t)|\|v\|_H^2)\\
&\geq\theta\|v\|_V^2-|M(t)|\|v\|_H^2.
\end{align*}
Thus, by \eqref{eq: a priori} applied to $h\coloneqq u'+A_\lambda u\in S$ and $x\coloneqq u(0)$:
\[
\|u\|_{\MR(0,T)}\leq C_\theta\exp(2\|M\|_{L^1(0,T)})\big(\|h\|_S+\|u(0)\|_H\big)=C\|L_\lambda u\|_{S\times H},
\]
with $C\coloneqq C_\theta\exp(2\|M\|_{L^1(0,T)})$. Together with \eqref{eq: est A_lambda} this gives for all $u\in E$:
\begin{align*}
  \|u\|_E =\|u\|_{\MR(0,T)}+\|u'\|_S &=\|u\|_{\MR(0,T)}+\|h(\cdot)-A_\lambda(\cdot)u(\cdot)\|_S\\
  &\leq \|u\|_{\MR(0,T)}+\|L_\lambda u\|_{S\times H}+\|A_\lambda(\cdot)u(\cdot)\|_S\\
   &\leq (1+\theta+\alpha)\|u\|_{\MR(0,T)}+\|L_\lambda u\|_{S\times H}\\
   &\leq (1+C(1+\theta+\alpha))\|L_\lambda u\|_{S\times H}.
\end{align*}

Finally, note that $L_0\colon E\to S\times H: \big(L_0u\big)=\big(u'(\cdot)+A_0u(\cdot),u(0)\big)$ is surjective. This follows from Proposition \ref{prop: surjectivity L_0} applied to $\bar{A}_0\colon [0,T]\to \mathcal{L}(V,V^*)$ given by $\bar{A}_0(t)v\coloneqq A_0v= \theta(\cdot,v)_{V}$.

All requirements for the method of continuity are fulfilled and we conclude that $L_1$ is surjective, giving existence of strong solutions. The a priori estimate \eqref{eq: a priori} now follows from Lemma \ref{lem: a priori} and proves uniqueness of strong solutions at once, since $\bar{A}(t)$ is linear.
\end{proof}

As promised, a mere application of Theorem \ref{th: well-posed with M in L^1} now gives us the desired well-posedness and maximal regularity estimate for \eqref{eq: linear skeleton}.

\begin{corollary}\label{cor: skeleton MR linearized crit var setting}
Let $A_0$ and $B_0$ satisfy the conditions concerning $A_0,B_0$ in Assumption \ref{ass: crit var set} and let $\psi\in L^2(0,T;U)$.
Let $T>0$ and $w\in L^\infty(0,T;H)$.
Define $\bar{A}\colon [0,T]\to\mathcal{L}(V,V^*)$ by $\bar{A}(t)v\coloneqq  A_0(t,w(t))v-B_0(t,w(t))v\psi(t)$.
Then $\bar{A}$ satisfies all conditions of Theorem \ref{th: well-posed with M in L^1}. Consequently, for any $\bar{f}\in L^2(0,T;V^*)$ and $\bar{g}\in L^2(0,T;\UH)$, there exists a unique strong solution  $u\in \MR(0,T)$ to
\begin{equation*}
\begin{cases}
    &u'(t)+A_0(t,w(t))u(t)-B_0(t,w(t))u(t)\psi(t)=\bar{f}(t)+\bar{g}(t)\psi(t), \\
    &u(0)=x,
\end{cases}
\end{equation*}
Moreover, for any $\tilde{T}\in[0,T]$ there exists a constant $K_{\tilde{T}}>0$ such that
 \begin{equation}\label{def: maxreg}
 \|u\|_{\MR(0,\tilde{T})}\leq K_{\tilde{T}}\left(\|x\|_H+\|\bar{f}\|_{L^2(0,\tilde{T};V^*)} +\|\bar{g}\|_{L^2(0,\tilde{T};\UH)}\right),
 \end{equation}
and $K_{\tilde{T}}$ is non-decreasing in $\tilde{T}$ and depends further only on $T$, $\|w\|_{L^\infty(0,T;H)}$ and $\|\psi\|_{L^2(0,\tilde{T};U)}$.
\end{corollary}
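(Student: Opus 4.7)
The plan is to verify the two hypotheses of Theorem \ref{th: well-posed with M in L^1} for the operator $\bar A$ and then read off both well-posedness and the maximal regularity estimate \eqref{def: maxreg} from that theorem together with Lemma \ref{lem: a priori}. Set $n\coloneqq \|w\|_{L^\infty(0,T;H)}$, so $\|w(t)\|_H\le n$ for a.e.\ $t\in[0,T]$, and let $\theta_{n,T}, M_{n,T}, C_{n,T}$ denote the constants from Assumption \ref{ass: crit var set} at level $n$.

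For the mapping property \eqref{eq: bar{A} bdd MR(0,T) to S}, split $\bar A(t)u(t)=A_0(t,w(t))u(t)-B_0(t,w(t))u(t)\psi(t)$. The bound on $A_0$ in Assumption \ref{it:ass3} places the first summand in $L^2(0,T;V^*)$ with norm at most $C_{n,T}(1+n)\|u\|_{L^2(0,T;V)}$. For the second summand, Cauchy--Schwarz together with the bound on $B_0$ yields $\|B_0(t,w(t))u(t)\psi(t)\|_H\le C_{n,T}(1+n)\|u(t)\|_V\|\psi(t)\|_U$, and a further Cauchy--Schwarz in time puts this in $L^1(0,T;H)$ with norm at most $C_{n,T}(1+n)\|\psi\|_{L^2(0,T;U)}\|u\|_{L^2(0,T;V)}$. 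Hence $\bar A(\cdot)u(\cdot)\in S$ with $\|\bar A(\cdot)u(\cdot)\|_S\le C_{n,T}(1+n)(1+\|\psi\|_{L^2(0,T;U)})\|u\|_{\MR(0,T)}$, giving \eqref{eq: bar{A} bdd MR(0,T) to S}.

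For coercivity \eqref{eq: coercivity L_1}, Cauchy--Schwarz and Young's inequality yield
\[
|(B_0(t,w(t))v\,\psi(t),v)_H|\le \nn B_0(t,w(t))v\nn_H\|\psi(t)\|_U\|v\|_H\le \tfrac{1}{2}\nn B_0(t,w(t))v\nn_H^2+\tfrac{1}{2}\|\psi(t)\|_U^2\|v\|_H^2.
\]
Combined with Assumption \ref{it:ass2} at level $n$ this gives $\<\bar A(t)v,v\?\ge \theta_{n,T}\|v\|_V^2-M(t)\|v\|_H^2$ with $M(t)\coloneqq M_{n,T}+\tfrac{1}{2}\|\psi(t)\|_U^2\in L^1(0,T)$, which is exactly \eqref{eq: coercivity L_1}. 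Theorem \ref{th: well-posed with M in L^1} therefore delivers a unique strong solution $u\in\MR(0,T)$, and the right-hand side $\bar f+\bar g\psi$ of our equation belongs to $S$ because $\|\bar g\psi\|_{L^1(0,T;H)}\le \|\bar g\|_{L^2(0,T;\UH)}\|\psi\|_{L^2(0,T;U)}$.

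For \eqref{def: maxreg}, apply Lemma \ref{lem: a priori} on the sub-interval $[0,\tilde T]$: the resulting prefactor $C_{\theta_{n,T}}\exp(2\|M\|_{L^1(0,\tilde T)})$ depends on $T$ and $n=\|w\|_{L^\infty(0,T;H)}$ via $\theta_{n,T}, M_{n,T}$ and on $\|\psi\|_{L^2(0,\tilde T;U)}$ via the integral of $\|\psi(\cdot)\|_U^2$, and is manifestly non-decreasing in $\tilde T$. Combining with the analogous localized bound $\|\bar f+\bar g\psi\|_S\le \|\bar f\|_{L^2(0,\tilde T;V^*)}+\|\bar g\|_{L^2(0,\tilde T;\UH)}\|\psi\|_{L^2(0,\tilde T;U)}$ and absorbing the $\|\psi\|_{L^2(0,\tilde T;U)}$ factor into the prefactor gives \eqref{def: maxreg}. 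The only mild obstacle here is bookkeeping: the $B_0\psi$ contribution must be assigned to the $L^1(H)$-component of $S$ rather than to $L^2(V^*)$, and Young's inequality has to be used in the coercivity step to absorb the bilinear $B_0$-term into the quadratic one from Assumption \ref{it:ass2}. Everything else is a direct unwinding of definitions.
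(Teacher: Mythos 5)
Your proof is correct and follows essentially the same route as the paper: decompose $\bar A(\cdot)u(\cdot)$ into the $L^2(V^*)$ piece from $A_0$ and the $L^1(H)$ piece from $B_0(\cdot)u(\cdot)\psi(\cdot)$, use Cauchy--Schwarz twice to bound the latter, and absorb the $B_0\psi$ cross term in the coercivity check via Young's inequality into the $\tfrac12\nn B_0 v\nn_H^2$ term from Assumption~\ref{it:ass2}, then invoke Theorem~\ref{th: well-posed with M in L^1} and localize via Lemma~\ref{lem: a priori}. The only thing the paper handles more explicitly is the measurability bookkeeping (it fixes a strongly measurable representative of $w$ with $\|w(t)\|_H\le n$ pointwise, so that Assumption~\ref{it:ass3} can be applied everywhere rather than a.e., and refers to Remark~\ref{rem: mble A_0 B_0 F G} for strong measurability of $\bar A(\cdot)u(\cdot)$), but this does not change the substance of the argument.
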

\begin{proof}
Put $n\coloneqq \|w\|_{L^\infty(0,T;H)}$.
Since strong solutions only depend on $\bar{A}$ through an integral, we can fix a strongly measurable, pointwise defined measurable version of $w$ which satisfies $\|w(t)\|_H\leq n$ for all $t\in[0,T]$.
Strong measurability of $\bar{A}(\cdot)u(\cdot)$ is then satisfied if $u\in\MR(0,T)$, see Remark \ref{rem: mble A_0 B_0 F G}. Moreover, Assumption \ref{it:ass3} gives for all $u\in \MR(0,T)$:
\[
\|A_0(\cdot,w(\cdot))u(\cdot)\|_{L^2(0,T;V^*)} \leq C_{n,T}\|u\|_{L^2(0,T;V)}<\infty
\]
and by the Cauchy--Schwarz inequality,
\begin{align*}
\|B_0(\cdot,w(\cdot))u(\cdot)\psi(\cdot)\|_{L^1(0,T;H)}
&\leq  \|B_0(\cdot,w(\cdot))u(\cdot)\|_{L^2(0,T;\UH)}\|\psi\|_{L^2(0,T;U)}  \\
& \leq   C_{n,T}\|u \|_{L^2(0,T;V)} \|\psi\|_{L^2(0,T;U)}<\infty.
\end{align*}
Hence, $\bar{A}(\cdot)u(\cdot)\in S$ and we have
\begin{align*}
 \|\bar{A}(\cdot)u(\cdot)\|_S&\leq  \|A_0(\cdot,w(\cdot))u(\cdot)\|_{L^2(0,T;V^*)}+\|B_0(\cdot,w(\cdot))u(\cdot)\psi(\cdot)\|_{L^1(0,T;H)}\\
 &\leq C_{n,T}(1+\|\psi\|_{L^2(0,T;U))})\|u\|_{L^2(0,T;V)}\\
  &\leq \alpha \|u\|_{\MR(0,T)},
\end{align*}
where $\alpha\coloneqq C_{n,T}(1+\|\psi\|_{L^2(0,T;U))})$.

Furthermore, by Assumption \ref{it:ass2}, we have for all $v\in V$ and $t\in[0,T]$:
    \begin{align*}
    \<\bar{A}(t)v,v\?&=\<A_0(t,w(t))v,v\?-\<B_0(t,w(t))\psi(t)v,v\?\\
    &\geq \<A_0(t,w(t))v,v\?- \frac{1}{2}\nn B_0(t,w(t))v \nn_H^2-\frac{1}{2}\|\psi(t)\|_U^2\|v\|_H^2\\
    &\geq \theta_{n,T}\|v\|_V^2-({M}_{n,T}+\frac{1}{2}\|\psi(t)\|_U^2)\|v\|_H^2,
  \end{align*}
  so coercivity \eqref{eq: coercivity L_1} is satisfied with ${\theta}\coloneqq \theta_{n,T}$ and  ${M}(\cdot)\coloneqq {M}_{n,T}+\frac{1}{2}\|\psi(\cdot)\|_U^2\in L^1(0,T)$.

As before by the Cauchy--Schwarz inequality, $h\coloneqq \bar{f}+\bar{g}\psi \in S$.
Now Theorem \ref{th: well-posed with M in L^1} yields existence of a unique strong solution $u\in\MR(0,T)$ to \eqref{eq: linear skeleton}.
Finally, let $\tilde{T}\in(0,T]$ be arbitrary and
put $\tilde{S}\coloneqq L^2(0,\tilde{T};V^*)+L^1(0,\tilde{T};H)$. We have
\begin{align}\label{eq: est F-norm f}
\|h\|_{\tilde{S}}\leq \|\bar{f}\|_{L^2(0,\tilde{T};V^*)}+\|\bar{g}\psi\|_{L^1(0,\tilde{T};H)}
\leq \|\bar{f}\|_{L^2(0,\tilde{T};V^*)}+\|\bar{g}\|_{L^2(0,\tilde{T};\UH)}\|\psi\|_{L^2(0,\tilde{T};U)}.
\end{align}
As $u|_{[0,\tilde{T}]}$ is a strong solution to \eqref{eq: linear skeleton} on $[0,\tilde{T}]$, \eqref{eq: a priori} and \eqref{eq: est F-norm f} yield
\begin{align*}
 \|u\|_{\MR(0,\tilde{T})}&\leq  C_{\theta}\exp(2\|M\|_{L^1(0,\tilde{T})})\big(\|h\|_{\tilde{S}}+\|x\|_H\big)\\
 &\leq  K_{\tilde{T}}\left(\|x\|_H+\|\bar{f}\|_{L^2(0,\tilde{T};V^*)} +\|\bar{g}\|_{L^2(0,\tilde{T};\UH)}\right),
\end{align*}
where $K_{\tilde{T}}\coloneqq C_{\theta}\exp(2\|M\|_{L^1(0,\tilde{T})})(1\vee\|\psi\|_{L^2(0,\tilde{T};U)})$ is non-decreasing in $\tilde{T}$. Note that apart from $\tilde{T}$, $K_{\tilde{T}}$ only depends on $T$, $n$ and $\|\psi\|_{L^2(0,\tilde{T};U)}$, since these determine $\theta$ and $\|M\|_{L^1(0,\tilde{T})}$.
\end{proof}

\begin{remark}\label{rem: mble A_0 B_0 F G}
In Corollary \ref{cor: skeleton MR linearized crit var setting}, the map $t\mapsto \bar{A}(t)v$ is strongly Borel measurable for any $v\in V$, and even more is true. Assumption \ref{ass: crit var set} assures that we have strong Borel measurability of $A_0(\cdot,w(\cdot))u(\cdot), F(\cdot,u(\cdot))\colon [0,T]\to V^*$ and $B_0(\cdot,w(\cdot))u(\cdot), G(\cdot,u(\cdot))\colon [0,T]\to \UH$, for any $u\in L^0(0,T;V)$ and $w\in L^0(0,T;H)$. This follows from strong measurability of $u\colon [0,T]\to V$ and $w:[0,T]\to H$ and the fact that by Assumption \ref{it:ass3}, $F(t,\cdot), G(t,\cdot)$ are continuous on $V$ and $A_0(t,\cdot)\cdot, B_0(t,\cdot)\cdot$ are continuous on $H\times V$. Moreover, one uses the measurability of Assumption \ref{it:ass1}, separability of $V$, $H$, $V^*$, $\UH$ and continuity of $V\into H\into V^*$.
\end{remark}

\subsection{Local well-posedness}\label{sec: local skeleton}

From now on, we let $\psi\in L^2(0,T;U)$ be arbitrary but fixed.
Using Corollary \ref{cor: skeleton MR linearized crit var setting}, we will prove local well-posedness of the actual skeleton equation \eqref{eq: skeleton eq}. Local well-posedness is established in Theorem \ref{th: local well posedness skeleton}. Its proof and preparatory lemma's are analogous to \cite[\S 18.2]{HNVWvolume3}, which was inspired by \cite{pruss17addendum} and \cite{pruss18JDE}.

The skeleton equation does not fit in the setting of \cite{HNVWvolume3}, \cite{pruss17addendum} or \cite{pruss18JDE}, for the reason that we only have $L^1$-( instead of $L^2$-)integrability of the term $B(\cdot,u^\psi(\cdot))\psi(\cdot)$ in \eqref{eq: skeleton eq}. Besides that, our maximal regularity space $\MR(0,T)=C([0,T];H)\cap L^2(0,T;V)$ is different.

When no confusion can arise, we omit the time input in our notations for brevity. For example, for $u,v\in\MR(0,T)$ we denote by $A_0(u)v$ and $B_0(u)v\psi$ the maps $t\mapsto A_0(t,u(t))v(t)$ and $t\mapsto B_0(t,u_0)v(t)\psi(t)$ respectively and similarly for $F(u)$ and $G(u)\psi$. We define the following $V^*$-valued mappings:
\begin{equation*}
  \tilde{A}(u)v\coloneqq A_0(u)v-B_0(u)v\psi,\qquad \tilde{F}(u)=F(u)+f+(G(u)+g)\psi.
\end{equation*}

\begin{theorem}[Local well-posedness of the skeleton equation]\label{th: local well posedness skeleton}
Suppose that $(A,B)$ satisfies Assumption \ref{ass: crit var set}.
Let $u_0\in H$ be fixed. Then there exist $\tilde{T},\eps>0$ such that for each $v_0\in B_H(u_0,\eps)$,
there exists a unique strong solution $u_{v_0}\in \MR(0,\tilde{T})$ to
\begin{equation}\label{eq: local well-posed}
  \begin{cases}
    &u'+\tilde{A}(u)u=\tilde{F}(u)\qquad \text{ on } [0,\tilde{T}] \\
    &u(0)=v_0.
  \end{cases}
\end{equation}
Moreover, there exists a constant $C>0$ such that for all $v_0,w_0\in B_H(u_0,\eps)$:
\begin{equation}\label{eq: ct dependence local well-posed}
  \|u_{v_0}-u_{w_0}\|_{\MR(0,\tilde{T})}\leq C\|v_0-w_0\|_H.
\end{equation}
\end{theorem}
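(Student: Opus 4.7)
My plan is to apply Banach's fixed point theorem on a closed ball in $\MR(0,\tilde{T})$ centered at a purely linear reference solution, with the linear maximal regularity estimate of Corollary \ref{cor: skeleton MR linearized crit var setting} serving as the engine. The quasilinear coefficients $A_0,B_0$ are frozen at the current iterate and the critical nonlinearities $F,G$ are placed on the right-hand side. This adapts the quasilinear strategy of \cite[\S 18.2]{HNVWvolume3} and \cite{pruss17addendum} to the $L^2$-Hilbert setting admitting $L^1(0,T;H)$ sources.

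\textbf{Setup and the map $\Phi$.} Apply Corollary \ref{cor: skeleton MR linearized crit var setting} with $w \equiv u_0 \in L^\infty(0,T;H)$ and sources $\bar{f}=f$, $\bar{g}=g$ to obtain a reference $\bar{u} \in \MR(0,T)$ of the purely linear equation, with $\bar{u}(0)=u_0$. For parameters $R,\eps>0$ and $\tilde{T}\in(0,T]$ to be chosen, and for $v_0\in B_H(u_0,\eps)$, set
\[
Z \coloneqq \{\, u\in\MR(0,\tilde{T}) : u(0)=v_0,\ \|u-\bar{u}\|_{\MR(0,\tilde{T})}\leq R\,\}.
\]
Since $\MR(0,\tilde{T})\into C([0,\tilde{T}];H)$, every $u\in Z$ satisfies $\|u\|_{L^\infty(0,\tilde{T};H)}\leq N\coloneqq \|\bar u\|_{L^\infty(0,T;H)}+R$, so the constants in Assumption \ref{ass: crit var set} are uniform on $Z$. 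For $u\in Z$, define $\Phi(u)\in\MR(0,\tilde{T})$ as the unique solution obtained from Corollary \ref{cor: skeleton MR linearized crit var setting} with $w=u$, $\bar{f}=F(u)+f$, $\bar{g}=G(u)+g$, and initial value $v_0$. That $F(u)\in L^2(0,\tilde{T};V^*)$ and $G(u)\in L^2(0,\tilde{T};\UH)$ follows from the critical bounds in Assumption \ref{it:ass3} combined with \eqref{eq:interpolation estimate}, giving $\|F(t,u)\|_{V^*}^2\lesssim_N 1+\|u\|_V^{2(2\beta_j-1)(\rho_j+1)}$ pointwise; criticality \eqref{eq: critical} forces $2(2\beta_j-1)(\rho_j+1)\leq 2$, yielding $L^1(0,\tilde{T})$-integrability since $u\in L^2(0,\tilde{T};V)$.

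\textbf{Self-map and contraction.} For $v\coloneqq \Phi(u)-\bar{u}$, which solves a linear equation with coefficients $A_0(u),B_0(u)$, initial data $v_0-u_0$, and source
\[
\mathcal{R}(u)=F(u)+G(u)\psi+(A_0(u_0)-A_0(u))\bar{u}+(B_0(u)-B_0(u_0))\bar{u}\,\psi,
\]
the maximal regularity estimate \eqref{def: maxreg} gives $\|v\|_{\MR(0,\tilde{T})}\leq K_{\tilde{T}}(\|v_0-u_0\|_H+\|\mathcal{R}(u)\|_{S})$. The quasilinear difference terms are bounded via Assumption \ref{it:ass3} and Cauchy--Schwarz by $C_N\|u-u_0\|_{L^\infty(0,\tilde{T};H)}(1+\|\psi\|_{L^2(0,\tilde T;U)})\|\bar u\|_{L^2(0,\tilde T;V)}$, which tends to $0$ as $\tilde T\downarrow 0$ by absolute continuity of the integral and continuity of $\bar u$ at $0$. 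For the nonlinear terms the direct estimate of the previous paragraph does not produce smallness in the critical regime, so I split $F(u)=F(\bar u)+(F(u)-F(\bar u))$; the first piece is small in $L^2(0,\tilde T;V^*)$ for small $\tilde T$ again by absolute continuity, and the second is bounded using the Lipschitz estimate of Assumption \ref{it:ass3} together with interpolation \eqref{eq:interpolation estimate} applied to $u-\bar u$, extracting a factor $\|u-\bar u\|_{L^\infty(0,\tilde T;H)}^{2-2\beta_j}\leq R^{2-2\beta_j}$ with \emph{positive} exponent $2-2\beta_j>0$. The term $G(u)\psi$ is handled analogously, with $\psi\in L^2(0,\tilde T;U)$ paired with $G(u)\in L^2(0,\tilde T;\UH)$ via Cauchy--Schwarz to produce an $L^1(0,\tilde T;H)$-bound of the same form. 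Choosing first $R$, then $\tilde T=\tilde T(R)$, then $\eps=\eps(R,\tilde T)$ small enough gives $\Phi(Z)\subset Z$. A parallel computation on $\Phi(u_1)-\Phi(u_2)$, using the Lipschitz parts of Assumption \ref{it:ass3} everywhere, produces a contraction constant $<1$ on $Z$ after possibly further shrinking the parameters.

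\textbf{Conclusion, uniqueness, Lipschitz dependence.} Banach's theorem yields a unique fixed point $u_{v_0}\in Z$ of $\Phi$, which by construction is a strong solution to \eqref{eq: local well-posed}. Uniqueness in all of $\MR(0,\tilde T)$ (not just in $Z$) follows because any strong solution is continuous into $H$ with value $v_0$ at $0$, hence lies in some $Z'$ with smaller radius on a possibly shorter interval, where the fixed-point map is still a contraction. The Lipschitz estimate \eqref{eq: ct dependence local well-posed} follows from applying \eqref{def: maxreg} to the difference equation satisfied by $u_{v_0}-u_{w_0}$ and absorbing the nonlinear source terms into the left-hand side using the contraction constant from the previous paragraph. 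The \emph{main obstacle} throughout is the self-map estimate in the genuinely critical regime $2\beta_j=1+1/(\rho_j+1)$: here $\int_0^{\tilde T}\|u\|_V^2\,\dd t$ cannot be made small uniformly over $Z$ by shrinking $\tilde T$ alone, and smallness must be extracted entirely from the splitting around $\bar u$, combining absolute continuity for the $\bar u$-contribution with the positive interpolation power $2-2\beta_j$ on the $u-\bar u$-contribution.
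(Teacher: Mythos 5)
Your strategy is genuinely a variant of the paper's, and I believe it can be made to work, but it is not the same fixed-point scheme, and there is a gap in your uniqueness argument that matters.

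\textbf{Comparison of the iteration schemes.} The paper defines its map via equation \eqref{eq:contraction map def}: the quasilinear coefficient $\tilde{A}$ is \emph{always frozen at $u_0$}, while the iterate $v$ enters only through the correction term $(\tilde{A}(u_0)-\tilde{A}(v))v$ and $\tilde{F}(v)$ on the right-hand side. You instead freeze $\tilde{A}$ at the \emph{current iterate}: $\Phi(u)$ solves $\tilde u'+\tilde A(u)\tilde u=\tilde F(u)$. Both maps have the same fixed points, so your route is legitimate, and the maximal regularity constant from Corollary \ref{cor: skeleton MR linearized crit var setting} is indeed uniform over the ball because it depends only on $\|w\|_{L^\infty(0,T;H)}\le N$. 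The paper's version has the small advantage that the linear operator in \eqref{eq:contraction map def} is genuinely fixed across iterations, so the bookkeeping for $\Psi_{v_0}(v)-\Psi_{w_0}(w)$ produces no commutator on the left; your version produces a term $(\tilde A(u_1)-\tilde A(u_2))\Phi(u_2)$ in the contraction estimate, which you then need to absorb using $\|\Phi(u_2)\|_{L^2(0,\tilde T;V)}\le R+\|\bar u\|_{L^2(0,\tilde T;V)}$. This works, but it is an extra moving part. Your reference solution $\bar u$ also carries $f,g$ as sources, whereas the paper's $z_{u_0}$ from \eqref{eq:def z_v_0} is the homogeneous solution; this is a cosmetic difference. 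Finally, you locate the small exponent powering the critical-case contraction as $2-2\beta_j$; in the paper's Lemma \ref{lem: lipschitz estimates} the factor that actually becomes small is $(r+\alpha_j(\tilde T))^{\rho_j}$ coming through the embedding $\MR(0,T)\into L^{2(\rho_j+1)}(0,T;X_{\beta_j})$ of Lemma \ref{lem: embeddings rho_j}\ref{it:emb1}, so the exponent is $\rho_j>0$; your intuition (a strictly positive interpolation power on the ball radius) is right even if the exponent you name is not the one that appears.

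\textbf{Gap in the uniqueness argument.} You claim uniqueness in all of $\MR(0,\tilde T)$ on the grounds that any strong solution ``lies in some $Z'$ with smaller radius on a possibly shorter interval.'' This only yields uniqueness on an initial subinterval $[0,\tilde T']$ whose length depends on the specific solution: continuity of $u$ into $H$ gives that $u$ stays close to $v_0$ for small time, but there is no a priori control forcing an arbitrary strong solution on $[0,\tilde T]$ to remain in the ball $Z$ for the \emph{entire} interval. So two solutions could conceivably agree on $[0,\tilde T']$ and separate afterwards, and your argument does not rule this out. The paper closes this by a continuation argument: it considers the first time $s$ at which two solutions differ, restarts the local fixed-point argument at time $s$ with initial value $v(s)=\tilde v(s)$, and derives a contradiction with the definition of $s$. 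You need something of this form; without it the uniqueness claim on $[0,\tilde T]$ is not established.

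Modulo this gap, the self-map and contraction estimates you sketch, together with the Lipschitz dependence obtained by applying \eqref{def: maxreg} to the difference equation and absorbing the contraction factor, parallel the paper's Lemmas \ref{lem:eps T_1}, \ref{lem:tilde f tilde g}, \ref{lem: lipschitz estimates} and the final fixed-point step.
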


Theorem \ref{th: local well posedness skeleton} will be proved using the Banach fixed point theorem, applied to the map $\Psi_{v_0}\colon \MR(0,\tilde{T})\to \MR(0,\tilde{T})$ defined by $\Psi_{v_0}(v)\coloneqq u$, where $u$ is the unique strong solution to
\begin{equation}\label{eq:contraction map def}
  \begin{cases}
    &u'+\tilde{A}(u_0)u=(\tilde{A}(u_0)-\tilde{A}(v))v+\tilde{F}(v)\quad \text{on } [0,\tilde{T}],\\
    &u(0)=v_0.
  \end{cases}
\end{equation}
Note that $u\in \MR(0,\tilde{T})$ is a strong solution to \eqref{eq: local well-posed} if and only if $\Psi_{v_0}(u)=u$.

Our first task is to prove that $\Psi_{v_0}$ is well-defined, i.e. \eqref{eq:contraction map def} is well-posed. By  Corollary \ref{cor: skeleton MR linearized crit var setting} ($w(t)\coloneqq u_0$) it suffices to show that
\begin{equation}\label{eq:def tilde f tilde g}
\tilde{f}\coloneqq (A_0(u_0)-A_0(v))v+F(v)+f,\quad \tilde{g}\coloneqq (B_0(u_0)-B_0(v))v+G(v)+g
\end{equation}
satisfy $\tilde{f}\in L^2(0,\tilde{T};V^*)$ and $\tilde{g}\in L^2(0,\tilde{T};\UH)$. The latter will be ascertained by the following lemma, which will also be used later on in Section \ref{sec: proof LDP}.

\begin{lemma}\label{lem: embeddings rho_j}
  Let $\rho_j\geq 0, \beta_j\in(\frac{1}{2},1)$ be such that $(2\beta_j-1)(\rho_j+1)\leq 1$. Let $V_{\beta_j}=[V^*,V]_{\beta_j}$ be the complex interpolation space with norm $\|\cdot\|_{\beta_j}\coloneqq \|\cdot\|_{V_{\beta_j}}$. Then, for any $T>0$:
  \begin{enumerate}[start, label=\emph{(\roman*)},ref=\textup{(\roman*)}]
    \item \label{it:emb1} $\iota_{\scriptscriptstyle j,T}:{\MR(0,T)}\into {L^{2(\rho_j+1)}(0,T;V_{\beta_j})}$. The embedding satisfies $\|\iota_{\scriptscriptstyle  j,T}\|\leq M_T^j$ with $M_T^j\in\R_+$ non-decreasing in $T$.
  \end{enumerate}
  Suppose that $(A,B)$ satisfies Assumption \ref{ass: crit var set}.
  Let $n\in\R_+$ and $T>0$. For $C_{n,T}$ the constant from Assumption \ref{it:ass3} (non-decreasing in $n$ and $T$), it holds that
  \begin{enumerate}[resume,label=\emph{(\roman*)},ref=\textup{(\roman*)}]
  \item \label{it:emb2} for all $u\in C([0,T];H)$, $w\in L^2(0,T;V)$ with $\|u\|_{C([0,T];H)}\leq n$:
  \begin{equation*}\label{eq: estimate A_0 B_0}
  \|A_0(u)w\|_{L^2(0,T;V^*)}\vee \|B_0(u)w\|_{L^2(0,T;\UH)}\leq C_{n,T}\|w\|_{L^2(0,T;V)},
  \end{equation*}
  \item \label{it:emb3} for all $u,v\in C([0,T];H)$, $w\in L^2(0,T;V)$ with $\|u\|_{C([0,T];H)},\|v\|_{C([0,T];H)}\leq n$:
  \begin{align*}\label{eq: estimate A_0 B_0 difference}
  &\|(A_0(u)-A_0(v))w\|_{L^2(0,T;V^*)}\vee \|(B_0(u)-B_0(v))w\|_{L^2(0,T;\UH)}\\
  &\leq C_{n,T}\Big(\int_0^T\|u(s)-v(s)\|_H^2\|w(s)\|_{V}^{2}\dd s\Big)^{\frac{1}{2}}\leq C_{n,T}\|u-v\|_{C([0,T];H)}\|w\|_{L^2(0,T;V)}.
  \end{align*}
  \end{enumerate}
  Moreover, there exists a constant $\tilde{C}_{n,T}$ non-decreasing in $T$ such that
  \begin{enumerate}[resume,label=\emph{(\roman*)},ref=\textup{(\roman*)}]
    \item \label{it:emb4} for all $u\in\MR(0,T)$ with $\|u\|_{C([0,T];H)}\leq n$:
  \begin{equation*}\label{eq: estimate F G}
  \|F(u)\|_{L^2(0,T;V^*)}\vee \|G(u)\|_{L^2(0,T;\UH)}\leq \tilde{C}_{n,T}(1+\|u\|_{L^2(0,T;V)}).
  \end{equation*}
  \end{enumerate}
  Lastly, for each $\sigma>0$ there exists a constant $C_{n,T,\sigma}$ non-decreasing in $T$ such that
  \begin{enumerate}[resume,label=\emph{(\roman*)},ref=\textup{(\roman*)}]
  \item \label{it:emb5} for all $u,v\in\MR(0,T)$ with $\|u\|_{C([0,T];H)},\|v\|_{C([0,T];H)}\leq n$:
  \begin{align*}\label{eq: estimate F G difference}
  \|&F(u)-F(v)\|_{L^2(0,T;V^*)}^2\vee \|G(u)-G(v)\|_{L^2(0,T;\UH)}^2\\
  &\leq C_{n,T,\sigma}\int_0^t(1+\|u(s)\|_V^2+\|v(s)\|_V^2)\|u(s)-v(s)\|_H^2\dd s+\sigma C_{n,T}^2\|u-v\|_{L^2(0,T;V)}^2.
  \end{align*}
  \end{enumerate}
\end{lemma}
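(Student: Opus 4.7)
The plan is to derive part \ref{it:emb1} from the interpolation estimate \eqref{eq:interpolation estimate} and then use it together with the polynomial bounds in Assumption \ref{ass: crit var set} to establish \ref{it:emb2}--\ref{it:emb5}. For \ref{it:emb1}, start from $\|v(t)\|_{\beta_j} \leq K \|v(t)\|_H^{2-2\beta_j}\|v(t)\|_V^{2\beta_j-1}$, raise to the power $2(\rho_j+1)$, and integrate over $[0,T]$. Setting $\alpha_j \coloneqq (2\beta_j-1)(\rho_j+1) \in (0,1]$ by (sub)criticality, apply H\"older's inequality with exponent $1/\alpha_j$ on the $V$-norm factor to extract $T^{1-\alpha_j}\|v\|_{L^2(0,T;V)}^{2\alpha_j}$, and bound $\|v(t)\|_H$ by $\|v\|_{C([0,T];H)}$. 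After taking the $2(\rho_j+1)$-th root, the exponents on the $C([0,T];H)$- and $L^2(0,T;V)$-norms become $2(1-\beta_j)$ and $2\beta_j-1$, which sum to $1$, so their product is bounded by $\|v\|_{\MR(0,T)}$. One reads off $M_T^j = K\, T^{(1-\alpha_j)/(2(\rho_j+1))}$, non-decreasing in $T$.

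Parts \ref{it:emb2} and \ref{it:emb3} are immediate pointwise applications of Assumption \ref{it:ass3}: using $\|u(t)\|_H \leq n$ a.e.\ one obtains $\|A_0(u(t))w(t)\|_{V^*} \leq C_{n,T}(1+n)\|w(t)\|_V$ and the analogue for $B_0$, as well as the Lipschitz estimates with $\|u(t)-v(t)\|_H$ in place of $1+\|u(t)\|_H$; squaring and integrating yields \ref{it:emb2}, while bounding $\|u-v\|_H \leq \|u-v\|_{C([0,T];H)}$ gives the second form in \ref{it:emb3}. For \ref{it:emb4}, square $\|F(t,u(t))\|_{V^*} \leq C_{n,T}\sum_j(1+\|u(t)\|_{\beta_j}^{\rho_j+1})$, integrate, and invoke \ref{it:emb1} to bound $\int_0^T \|u(t)\|_{\beta_j}^{2(\rho_j+1)}\dd t$ by a constant (depending on $n$ and $T$ via $\|u\|_{C([0,T];H)}\leq n$) times $\|u\|_{L^2(0,T;V)}^{2\alpha_j}$. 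Since $\alpha_j \leq 1$, $\|u\|_{L^2(0,T;V)}^{2\alpha_j} \leq 1 + \|u\|_{L^2(0,T;V)}^2$; taking square roots yields the linear bound in \ref{it:emb4}, and the same argument handles $G$.

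The main obstacle is \ref{it:emb5}. Starting from the Lipschitz bound in Assumption \ref{it:ass3} and applying \eqref{eq:interpolation estimate} to $u-v$, one obtains pointwise
\begin{equation*}
\|F(u(t))-F(v(t))\|_{V^*}^2 \leq C\sum_j (1+\|u(t)\|_{\beta_j}^{2\rho_j}+\|v(t)\|_{\beta_j}^{2\rho_j})\|u(t)-v(t)\|_H^{4-4\beta_j}\|u(t)-v(t)\|_V^{2(2\beta_j-1)}.
\end{equation*}
The delicate step is to split this via Young's inequality with conjugate exponents $p = 1/(2\beta_j-1)$ and $p' = 1/(2-2\beta_j)$ acting on the two $u-v$ factors. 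The $V$-norm factor raised to $p$ becomes exactly $\|u-v\|_V^2$, absorbed into the term multiplied by $\sigma$. The conjugate factor raised to $p'$ produces $\|u-v\|_H^2$ times $(1+\|u\|_{\beta_j}^{2\rho_j}+\|v\|_{\beta_j}^{2\rho_j})^{1/(2-2\beta_j)}$; applying \eqref{eq:interpolation estimate} together with $\|u\|_H,\|v\|_H \leq n$ gives $\|u\|_V^{2\rho_j(2\beta_j-1)/(2-2\beta_j)}$ (and likewise for $v$), and this exponent is at most $2$ precisely by the (sub)criticality condition $(2\beta_j-1)(\rho_j+1) \leq 1$, with equality in the critical case. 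Hence this factor is bounded by a constant times $1+\|u\|_V^2+\|v\|_V^2$, and integration over $[0,T]$ completes the proof of \ref{it:emb5}; the same reasoning applies to $G$.
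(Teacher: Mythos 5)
Your proof is correct and essentially follows the paper's: the same interpolation estimate \eqref{eq:interpolation estimate}, the same H\"older step with exponent $1/\alpha_j$ in \ref{it:emb1}, the same Young exponents $1/(2\beta_j-1)$ and $1/(2-2\beta_j)$ in \ref{it:emb5}, and the subcriticality condition invoked in the identical way to cap the power on $\|\cdot\|_V$. The only deviation is cosmetic: in \ref{it:emb4} and \ref{it:emb5} you square and integrate before applying the interpolation/Young estimates (and in \ref{it:emb4} you route through the $L^{2(\rho_j+1)}$ bound rather than the pointwise bound $\|F(u)\|_{V^*}\le \bar C_{n,T}(1+\|u\|_V)$ used in the paper), but this produces the same constants up to relabeling.
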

\begin{proof}
\ref{it:emb1}: By the interpolation estimate \eqref{eq:interpolation estimate}, we have for any $u\in \MR(0,T)$:
\begin{align*}
\int_0^{T} \|u(t)\|_{\beta_j}^{2(\rho_j+1)}\dd t&\leq K \int_0^{T} \|u(t)\|_{H}^{2(\rho_j+1)(2-2\beta_j)}\|u(t)\|_{V}^{2(\rho_j+1)(2\beta_j-1)}\dd t\\
&\leq K \|u\|_{C([0,T];H)}^{2(\rho_j+1)(2-2\beta_j)}\int_0^{T}\|u(t)\|_{V}^{2(\rho_j+1)(2\beta_j-1)}\dd t\\
&\leq K \|u\|_{C([0,T];H)}^{2(\rho_j+1)(2-2\beta_j)}\|1\|_{L^{p_j'}(0,T)}\|\|u\|_V^{2(\rho_j+1)(2\beta_j-1)}\|_{L^{p_j}(0,T)}\\
&\leq K \|u\|_{C([0,T];H)}^{2(\rho_j+1)(2-2\beta_j)}(1\vee T)^{\frac{p_j-1}{p_j}}\|u\|_{L^2(0,T;V)}^{2(\rho_j+1)(2\beta_j-1)},
\end{align*}
where we applied H\"older's inequality for each $j$ with $p_j\coloneqq \frac{1}{(\rho_j+1)(2\beta_j-1)}\in[1,\infty)$, $p_j'\coloneqq \frac{p_j}{p_j-1}\in[1,\infty]$ and included the maximum with 1 to cover the case $p_j'=\infty$.
We conclude that
\begin{align*}
  \|u\|_{L^{2(\rho_j+1)}(0,T;V_{\beta_j})}&\leq M_{T}^j\|u\|_{C([0,T];H)}^{(2-2\beta_j)}\|u\|_{L^2(0,T;V)}^{(2\beta_j-1)}\\
  &\leq M_{T}^j \left((2-2\beta_j)\|u\|_{C([0,T];H)}+(2\beta_j-1)\|u\|_{L^2(0,T;V)}\right)\\
  &\leq M_{T}^j \|u\|_{\MR(0,T)},
\end{align*}
where $M_{T}^j\in\R_+$ is non-decreasing in $T$. We used  Young's inequality and the fact that $\beta_j\in(\frac{1}{2},1)$.

In \ref{it:emb2}-\ref{it:emb5}, note that strong measurability of $A_0(\cdot,u(\cdot))w(\cdot)$, $F(\cdot,u(\cdot))$, $B_0(\cdot,u(\cdot))w(\cdot)$ and $G(\cdot,u(\cdot))$ holds, as was mentioned in Remark \ref{rem: mble A_0 B_0 F G}.
Moreover, by symmetry in Assumption \ref{ass: crit var set}, $B_0$ and $G$ can be estimated in the same way as $A_0$ and $F$. We provide the estimates for the latter. 

Assumption \ref{it:ass3}  immediately yields \ref{it:emb2} and \ref{it:emb3}.

For
\ref{it:emb4}: by Assumption \ref{it:ass3} and \eqref{eq:interpolation estimate}, we have pointwise in $t\in[0,T]$:
\begin{align*}
\|F(u)\|_{V^*}\leq C_{n,T}\sum_{j=1}^{m_F}(1+\|u\|_{\beta_j}^{\rho_j+1})&\leq C_{n,T}\sum_{j=1}^{m_F}(1+(Kn^{2-2\beta_j})^{\rho_j+1}\|u\|_{V}^{(2\beta_j-1)(\rho_j+1)})\\
&\leq {C}_{n,T}\sum_{j=1}^{m_F}(1+C_n(1+\|u\|_{V})) \\
&\leq \bar{C}_{n,T}(1+\|u\|_{V}),
\end{align*}
where we used that $(2\beta_j-1)(\rho_j+1)\leq 1$ and put $C_n\coloneqq  \max_{j=1,\ldots,m_F} (Kn^{2-2\beta_j})^{\rho_j+1}<\infty$ and $\bar{C}_{n,T}\coloneqq m_F C_{n,T}(1+C_n)$.
Thus
\[
\|F(u)\|_{L^2(0,T;V^*)}\leq \bar{C}_{n,T}(T^{\frac{1}{2}}+\|u\|_{L^2(0,T;V)}) \leq \tilde{C}_{n,T}(1+\|u\|_{L^2(0,T;V)}),
\]
with $\tilde{C}_{n,T}=\bar{C}_{n,T}(T^{\frac{1}{2}}\vee 1)$.
Since ${C}_{n,T}$ is non-decreasing in $T$, the same holds for $\bar{C}_{n,T}$ and $\tilde{C}_{n,T}$.

For \ref{it:emb5}: the following estimates can be found in the proof of \cite[Prop.\ 4.5]{AV22variational}.
By Assumption \ref{it:ass3} we have pointwise in $t\in[0,T]$:
\begin{align}\label{eq: gronwall prep w_n I_2^n f_n}
\|F(u)-F(v) \|_{V^*}\leq C_{n,T}\sum_{j=1}^{m_F}\left(1+\|u\|_{\beta_j}^{\rho_j}+\|v\|_{\beta_j}^{\rho_j}\right)\|u-v\|_{\beta_j}.
\end{align}
By the interpolation estimate \eqref{eq:interpolation estimate} and Young's inequality (with powers $\frac{1}{2-2\beta}$ and $\frac{1}{2\beta-1}$), we have for all $y,z\in V$, $\beta\in(\frac{1}{2},1)$, $\rho\geq0$ with $(2\beta-1)(\rho+1)\leq 1$ and for all $\sigma>0$:
\begin{align}\label{eq: gronwall prep w_n I_2^n interpol}
  \|y\|_{\beta}^{\rho}\|z\|_\beta &\leq \left(K^{\rho+1}\|y\|_H^{(2-2\beta)\rho}\|y\|_V^{(2\beta-1)\rho}\|z\|_H^{2-2\beta}\right)\|z\|_V^{2\beta-1}\notag \\
 &\leq \sigma^{-\frac{2\beta-1}{2-2\beta}}(2-2\beta) K^{\frac{\rho+1}{2-2\beta}}\|y\|_H^{\rho}\|y\|_V^{\frac{(2\beta-1)\rho}{2-2\beta}}\|z\|_H+\sigma(2\beta-1)\|z\|_V \notag\\
 &\leq \sigma^{-\frac{2\beta-1}{2-2\beta}} K^{\frac{\rho+1}{2-2\beta}}\|y\|_H^{\rho}(1+\|y\|_V)\|z\|_H+\sigma\|z\|_V \notag\\
 &\leq M_{\sigma,\beta,\rho} \|y\|_H^{\rho}(1+\|y\|_V)\|z\|_H+\sigma\|z\|_V,
\end{align}
where $M_{\sigma,\beta,\rho}> 0$ is a constant depending only on $\sigma$, $\beta$ and $\rho$ and we let $0^0=1$. In the above we used that $a\coloneqq \frac{(2\beta-1)\rho}{2-2\beta} \in[0,1]$, hence $x^a\leq 1+x$ for $x\geq 0$.
For $j\in\{1,\ldots,m_F\}$, application of \eqref{eq: gronwall prep w_n I_2^n interpol} gives pointwise in $t\in[0,T]$:
\begin{align}\label{eq: gronwall prep w_n I_2^n f_n second ter,eq: gronwall prep w_n I_2^n f_n second term}
\Big(1+\|&u\|_{\beta_j}^{\rho_j}+\|v\|_{\beta_j}^{\rho_j}\Big)\|u-v\|_{\beta_j}\notag\\
\leq &\Big(M_{\sigma,\beta_j,0}+M_{\sigma,\beta_j,\rho_j}\|u\|_{H}^{\rho_j}(1+\|u\|_V) +M_{\sigma,\beta_j,\rho_j}\|v\|_{H}^{\rho_j}(1+\|v\|_V)\Big)\|u-v\|_{H}+3\sigma\|u-v\|_V\notag\\
&\quad\leq M_{\sigma}(1+\|u\|_V+\|v\|_V)\|u-v\|_{H}+3\sigma\|u-v\|_V,
\end{align}
with $M_\sigma\coloneqq \max_{j=1,\ldots, m_F}(M_{\sigma,\beta_j,0}+2M_{\sigma,\beta_j,\rho_j}N^{\rho_j})<\infty$.
Now, \eqref{eq: gronwall prep w_n I_2^n f_n} and \eqref{eq: gronwall prep w_n I_2^n f_n second ter,eq: gronwall prep w_n I_2^n f_n second term} imply
\begin{align*}
\|F(u)-F(v)\|_{V^*}&\leq C_{n,T}m_F M_{\sigma}(1+\|u\|_V+\|v\|_V)\|u-v\|_{H}+3\sigma C_{n,T}m_F \|u-v\|_V
\end{align*}
and hence, applying $(x_1+\ldots+x_d)^2\leq d(x_1^2+\ldots+x_d^2)$ with $d=2,3$:
\begin{align*}
  \|F(u)-F(v)\|_{L^2(0,T;V^*)}^2&\leq \bar{C}_{n,T,\sigma}\int_0^T(1+\|u(t)\|_V^2+\|v(t)\|_V^2)\|u(t)-v(t)\|_{H}^2\dd t\\
  &\qquad\qquad+2(3\sigma C_{n,T}m_F)^2 \int_0^T\|u(t)-v(t)\|_V^2\dd t,
\end{align*}
with $\bar{C}_{N,T,\sigma}=6(C_{n,T}m_F M_{\sigma})^2$. Since $C_{n,T}$ is non-decreasing in $T$, the same holds for $\bar{C}_{n,T,\sigma}$.
Substituting $\sigma=18\bar{\sigma}^2 m_F^2$, ${C}_{n,T,\sigma}\coloneqq \bar{C}_{n,T,\bar{\sigma}}$ now yields \ref{it:emb5}.
\end{proof}

\begin{remark}
Lemma \ref{lem: embeddings rho_j} yields $A(\cdot,u(\cdot))\in L^2(0,T;V^*)$ and $B(\cdot,u(\cdot))\in L^2(0,T;\UH)$  a.s.\ if $u\in \MR(0,T)$ a.s. Hence, under Assumption \ref{ass: crit var set}, this condition is redundant in the definition of a strong solution (Definition \ref{def:strong sol}).
\end{remark}

From Lemma \ref{lem: embeddings rho_j}, we see  that $\tilde{f}$ and $\tilde{g}$ defined by \eqref{eq:def tilde f tilde g} lie in $L^2(0,\tilde{T};V^*)$ and $L^2(0,\tilde{T};\UH)$ respectively,
for any $\tilde{T}>0$ and $v\in\MR(0,\tilde{T})$ (put $n=\|u_0\|_{H}\vee\|v\|_{C([0,\tilde{T}];H)}$ and apply \ref{it:emb3} and \ref{it:emb5}). Thus Corollary \ref{cor: skeleton MR linearized crit var setting} gives that \eqref{eq:contraction map def} is well-posed, i.e. $\Psi_{v_0}$ is well-defined.

Our next concern is to prove that $\Psi_{v_0}$ is contractive on a suitable smaller subspace of $\MR(0,T)$. To define this subspace, let us introduce some notations. For what follows, we fix an arbitrary $u_0\in H$ and $T>0$.
For $v_0\in H$, we let $z_{v_0}\in \MR(0,T)$ be the \emph{reference solution}, defined as the unique strong solution to the linear problem
\begin{equation}\label{eq:def z_v_0}
  \begin{cases}
    &z'+\tilde{A}(u_0)z=0\quad \text{ on } [0,T], \\
    &z(0)=v_0.
  \end{cases}
\end{equation}
Well-posedness holds by Corollary \ref{cor: skeleton MR linearized crit var setting}.
Note that $z_{u_0}(0)=u_0$ and $z_{u_0}\in \MR(0,T)$, so there exists a $T_1\in(0,T]$ such that
\begin{equation}\label{eq: def T_1}
  \|z_{u_0}-u_0\|_{C([0,T_1];H)}\leq \frac{1}{3}.
\end{equation}
We fix such a  $T_1$.
Finally, for $v_0\in H$, $r>0$ and $\tilde{T}\in[0,T]$, we define
\begin{equation}\label{eq:def Z_rT}
Z_{r,\tilde{T}}(v_0)\coloneqq \{v\in \MR(0,\tilde{T}):v(0)=v_0, \|v-z_{u_0}\|_{\MR(0,\tilde{T})}\leq r\}.
\end{equation}
Note that $Z_{r,\tilde{T}}(v_0)$ is closed in $\MR(0,\tilde{T})$, hence complete.
Eventually, we will find that $\Psi_{v_0}$ is contractive on some $Z_{r,\tilde{T}}(v_0)$.
Several crucial estimates will be gathered in the next lemma's.

\begin{lemma}\label{lem:eps T_1}
There exist $\eps_1,r_1>0$ such that for all $\eps\in(0,\eps_1]$, $r\in(0,r_1]$, $\tilde{T}\in(0,T_1]$, $v_0\in B_H(u_0,\eps)$ and $v\in Z_{r,\tilde{T}}(v_0)$ it holds that  $\|v-u_0\|_{C([0,\tilde{T}];H)}\leq 1$.
\end{lemma}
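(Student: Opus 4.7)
The plan is a direct application of the triangle inequality, using the two ingredients baked into the definitions. First I would decompose
\[
\|v - u_0\|_{C([0,\tilde{T}];H)} \;\leq\; \|v - z_{u_0}\|_{C([0,\tilde{T}];H)} + \|z_{u_0} - u_0\|_{C([0,\tilde{T}];H)}.
\]
For the second term, since $\tilde{T}\leq T_1$ and $T_1$ was chosen in \eqref{eq: def T_1} precisely so that $\|z_{u_0}-u_0\|_{C([0,T_1];H)}\leq \tfrac{1}{3}$, I immediately get $\|z_{u_0}-u_0\|_{C([0,\tilde{T}];H)}\leq \tfrac{1}{3}$. For the first term, since $v\in Z_{r,\tilde{T}}(v_0)$ and the $\MR(0,\tilde{T})$-norm contains the $C([0,\tilde{T}];H)$-component as a summand, the definition \eqref{eq:def Z_rT} yields $\|v-z_{u_0}\|_{C([0,\tilde{T}];H)} \leq \|v-z_{u_0}\|_{\MR(0,\tilde{T})} \leq r$.

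Adding the two bounds gives $\|v-u_0\|_{C([0,\tilde{T}];H)} \leq r + \tfrac{1}{3}$, so any choice $r_1 \leq \tfrac{2}{3}$ works, together with any $\eps_1>0$ (for instance $\eps_1 = 1$). Note that the parameter $\eps$ does not enter this particular estimate; it is carried in the statement only because the lemma is a stepping stone for the fixed-point argument on $Z_{r,\tilde{T}}(v_0)$ with $v_0\in B_H(u_0,\eps)$, where $\eps$ will be constrained later.

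There is no real obstacle here: the substantive content was already installed when $T_1$ was defined in \eqref{eq: def T_1} to reserve a $\tfrac{1}{3}$ budget for the error between the reference solution $z_{u_0}$ and the constant path $u_0$, leaving the remaining $\tfrac{2}{3}$ to be absorbed by choosing $r$ small enough. The role of this lemma in what follows is presumably to replace uniform-in-time bounds of the form ``$\|v\|_H\leq \|u_0\|_H+1$'' by a ball in $H$ of fixed radius on which Assumption \ref{ass: crit var set} provides uniform constants $C_{n,T}$, $\theta_{n,T}$, $M_{n,T}$ to run the fixed-point iteration for $\Psi_{v_0}$.
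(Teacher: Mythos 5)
Your proof is correct, and it is actually more economical than the paper's. The paper inserts the reference solution $z_{v_0}$ as an intermediate term and estimates
\[
\|v-u_0\|_{C([0,\tilde{T}];H)}\leq \|v-z_{v_0}\|_{\MR(0,\tilde{T})}+\|z_{v_0}-z_{u_0}\|_{\MR(0,T_1)}+\|z_{u_0}-u_0\|_{C([0,T_1];H)},
\]
then estimates $\|v-z_{v_0}\|$ by a further triangle inequality through $z_{u_0}$, so the term $\|z_{u_0}-z_{v_0}\|=\|z_{u_0-v_0}\|\leq K_{T_1}\|u_0-v_0\|_H$ (controlled via the maximal regularity bound \eqref{def: maxreg} and linearity of $v_0\mapsto z_{v_0}$) appears twice, giving the bound $r+2K_{T_1}\eps+\tfrac13$ and the specific choices $r_1=\tfrac13$, $\eps_1=(6K_{T_1})^{-1}$. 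You bypass $z_{v_0}$ entirely by noting that $Z_{r,\tilde{T}}(v_0)$ is, by construction \eqref{eq:def Z_rT}, a ball around $z_{u_0}$ (not $z_{v_0}$), so the two-term split $\|v-z_{u_0}\|+\|z_{u_0}-u_0\|$ already does the job, the $\MR$-norm dominating the $C([0,\tilde{T}];H)$-norm. The resulting bound $r+\tfrac13$ has no $\eps$-dependence at all, so $\eps_1$ may be chosen freely and $r_1=\tfrac23$ suffices. Both versions yield constants of the right flavor for the fixed-point argument that follows; yours avoids an unnecessary invocation of the maximal regularity estimate and the attendant constant $K_{T_1}$.
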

\begin{proof}
Let $\eps,r>0$, $\tilde{T}\in(0,T_1]$ and let $v\in Z_{r,\tilde{T}}(v_0)$.
We have
  \begin{align*}
     \|v-z_{v_0}\|_{\MR(0,\tilde{T})} &\leq \|v-z_{u_0}\|_{\MR(0,\tilde{T})} + \|z_{u_0}-z_{v_0}\|_{\MR(0,T_1)}\\
    &= \|v-z_{u_0}\|_{\MR(0,\tilde{T})} + \|z_{u_0-v_0}\|_{\MR(0,T_1)}\\
    &\leq r + K_{T_1}\|{u_0-v_0}\|_{H},
  \end{align*}
  where the last inequality follows from the definition of $Z_{r,\tilde{T}}(v_0)$ and \eqref{def: maxreg}.
  Therefore,
  \begin{align*}
    \|v-u_0\|_{C([0,\tilde{T}];H)} &\leq \|v-z_{v_0}\|_{\MR(0,\tilde{T})}+\|z_{v_0}-z_{u_0}\|_{\MR(0,T_1)}+\|z_{u_0}-u_0\|_{C([0,T_1];H)} \\
    &\leq \left(r + K_{T_1}\|{u_0-v_0}\|_{H}\right) +  K_{T_1}\|{u_0-v_0}\|_{H}+\frac{1}{3}\\
    &\leq r+2 K_{T_1}\eps+\frac{1}{3},
  \end{align*}
  whenever $v_0\in B_H(u_0,\eps)$. Taking $r_1= \frac{1}{3}$ and $\eps_1=(6 K_{T_1})^{-1}$, the claim is proved.
\end{proof}

The next lemma is analogous to \cite[Lem.\ 18.2.10]{HNVWvolume3}.

\begin{lemma}\label{lem:tilde f tilde g}
Let $u_0\in H$ and suppose that $(A,B)$ satisfies Assumption \ref{ass: crit var set}.
Let $\tilde{f}\in L^2(0,\tilde{T};V^*)$ and $\tilde{g}\in L^2(0,\tilde{T};\UH)$ be defined by \eqref{eq:def tilde f tilde g}.
For $\eps_1$ and $r_1$ from Lemma \ref{lem:eps T_1},  the following estimates hold for any  $\tilde{T}\in(0,T_1]$, $\eps\in(0,\eps_1]$, $r\in(0,r_1]$, $v_0\in B_H(u_0,\eps)$, $v\in Z_{r,\tilde{T}}(v_0)$  and $\sigma>0$:
\begin{align}
&\| \tilde{f}\|_{L^2(0,\tilde{T};V^*)}\vee \|\tilde{g}\|_{L^2(0,\tilde{T};\UH)}\leq \alpha_{T_1}(\tilde{T})+\beta_{T_1,\sigma}(\tilde{T},r)r+\sigma r, 
\label{eq: estimate tilde f sigma}
\end{align}
  where $\alpha_{T_1}(\tilde{T}),\beta_{T_1,\sigma}(\tilde{T},r)\downarrow 0$ as $\tilde{T},r\downarrow0$ and $\alpha_{T_1}(\tilde{T})$ and $\beta_{T_1,\sigma}(\tilde{T},r)$ are independent of $v_0$ and $v$.
\end{lemma}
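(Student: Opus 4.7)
My plan is to decompose both $\tilde{f} = (A_0(u_0) - A_0(v))v + F(v) + f$ and $\tilde{g} = (B_0(u_0) - B_0(v))v + G(v) + g$ into three pieces each and to match the three target terms $\alpha_{T_1}(\tilde T)$, $\beta_{T_1,r_1,\sigma}(\tilde T,r)r$, and $\sigma C_{T_1}r$ of \eqref{eq: estimate tilde f sigma}. The key observation is that on a small interval $[0,\tilde T]$ six ``reference'' quantities vanish as $\tilde T\downarrow 0$:
\[
a(\tilde T)\coloneqq \|z_{u_0}-u_0\|_{C([0,\tilde T];H)},\quad b(\tilde T)\coloneqq \|z_{u_0}\|_{L^2(0,\tilde T;V)},
\]
together with $\|f\|_{L^2(0,\tilde T;V^*)}$, $\|g\|_{L^2(0,\tilde T;\UH)}$, $\|F(z_{u_0})\|_{L^2(0,\tilde T;V^*)}$ and $\|G(z_{u_0})\|_{L^2(0,\tilde T;\UH)}$ (the last two are finite on $[0,T_1]$ by Lemma \ref{lem: embeddings rho_j}\ref{it:emb4}, hence absolutely continuous). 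None of these depend on $v_0$ or $v$. Fixing $n\coloneqq \|u_0\|_H+1$, Lemma \ref{lem:eps T_1} and \eqref{eq: def T_1} give $\|v\|_{C([0,\tilde T];H)}\leq n$ and $\|z_{u_0}\|_{C([0,T_1];H)}\leq n$, so the constants in Lemma \ref{lem: embeddings rho_j} can be taken uniformly in $v_0$ and $v$.

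For the quasilinear piece I would apply the sharper integral form of Lemma \ref{lem: embeddings rho_j}\ref{it:emb3}:
\[
\|(A_0(u_0)-A_0(v))v\|_{L^2(0,\tilde T;V^*)}\leq C_{n,T_1}\Big(\int_0^{\tilde T}\|u_0-v\|_H^2\|v\|_V^2\dd t\Big)^{1/2}.
\]
Using $\|u_0-v\|_{C([0,\tilde T];H)}\leq a(\tilde T)+r$ (by the triangle inequality and the definition of $Z_{r,\tilde T}(v_0)$) and $\|v\|_{L^2(0,\tilde T;V)}\leq b(\tilde T)+r$, the right-hand side is at most $C_{n,T_1}(a(\tilde T)+r)(b(\tilde T)+r) = C_{n,T_1}\bigl(a(\tilde T)b(\tilde T) + (a(\tilde T)+b(\tilde T)+r)r\bigr)$, which splits cleanly into an $\alpha_{T_1}(\tilde T)$-contribution and a $\beta_{T_1,r_1,\sigma}(\tilde T,r)r$-contribution. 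The $B_0$-piece of $\tilde g$ is handled identically.

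For the semilinear piece I would further split $F(v) = F(z_{u_0}) + (F(v)-F(z_{u_0}))$. The first summand contributes to $\alpha_{T_1}(\tilde T)$ as noted. To the second I would apply Lemma \ref{lem: embeddings rho_j}\ref{it:emb5} with $\sigma^2$ in place of $\sigma$, yielding
\[
\|F(v)-F(z_{u_0})\|_{L^2(0,\tilde T;V^*)}^2\leq C_{n,T_1,\sigma^2}\!\!\int_0^{\tilde T}\!\!(1+\|z_{u_0}\|_V^2+\|v\|_V^2)\|v-z_{u_0}\|_H^2\dd t + \sigma^2 C_{n,T_1}^2\|v-z_{u_0}\|_{L^2(0,\tilde T;V)}^2.
\]
Using $\|v-z_{u_0}\|_{C([0,\tilde T];H)}\leq r$ and $\|v\|_{L^2(0,\tilde T;V)}^2\leq 2(r^2+b(\tilde T)^2)$, the first integral is at most $C_{n,T_1,\sigma^2}r^2(\tilde T+3b(\tilde T)^2+2r^2)$; its square root is of the form $\beta_{T_1,r_1,\sigma}(\tilde T,r)r$. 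The second summand yields $\sigma C_{n,T_1}r$, which together with the analogous $G$-contribution forms the required $\sigma C_{T_1}r$ term. The remaining pieces $f$ and $g$ contribute to $\alpha_{T_1}(\tilde T)$ by absolute continuity.

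Adding all contributions and using $\sqrt{x_1^2+\ldots+x_k^2}\leq x_1+\ldots+x_k$ yields \eqref{eq: estimate tilde f sigma}. The main technical nuisance is the bookkeeping: verifying that $\alpha_{T_1}$, $\beta_{T_1,r_1,\sigma}$ and $C_{T_1}$ are independent of $v_0$ and $v$ (every constant depends only on $u_0$, $T_1$, $r_1$, $\sigma$ and the fixed data $f,g$) and that $\alpha_{T_1}(\tilde T),\beta_{T_1,r_1,\sigma}(\tilde T,r)\downarrow 0$ as $\tilde T, r\downarrow 0$ (which follows from continuity of $z_{u_0}$ at $0$ in $H$ and absolute continuity of the $L^2$-integrals of $z_{u_0}$, $F(z_{u_0})$, $G(z_{u_0})$, $f$, $g$ on $[0,T_1]$).
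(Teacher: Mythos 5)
Your proposal is correct and follows essentially the same strategy as the paper's proof: decompose $\tilde f$ (resp.\ $\tilde g$) into the quasilinear difference, the semilinear term split as $F(z_{u_0}) + (F(v)-F(z_{u_0}))$, and the inhomogeneity, then apply Lemma~\ref{lem: embeddings rho_j}\ref{it:emb3} and \ref{it:emb5} with the bounds $\|v - z_{u_0}\|_{\MR(0,\tilde T)}\le r$, $\|u_0-z_{u_0}\|_{C(H)}\to 0$, $\|z_{u_0}\|_{L^2(V)}\to 0$. The only cosmetic differences are that you keep the integral form in \ref{it:emb3} and explicitly substitute $\sigma^2$ before taking square roots (where the paper simply relabels $\sqrt{\sigma}$ as $\sigma$); neither affects the argument.
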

\begin{proof}
Let  $v_0\in B_H(u_0,\eps)$, $\tilde{T}\in(0,T_1]$ and $v\in Z_{r,\tilde{T}}(v_0)$ be arbitrary. We estimate each term appearing in the definition of $\tilde{f}$. By Lemma \ref{lem:eps T_1},
\begin{equation}\label{eq: v estimate}
\|v\|_{C([0,\tilde{T}];H)}\leq \|v-u_0\|_{C([0,T_1];H)}+\|u_0\|_H\leq \|u_0\|_H+1.
\end{equation}
Putting $C_{T_1}\coloneqq C_{\|u_0\|_H+1,T_1}$, Lemma \ref{lem: embeddings rho_j}\ref{it:emb3} gives
\begin{align}
\|A_0(u_0)v&-A_0(v)v\|_{L^2(0,\tilde{T};V^*)}\leq C_{T_1}\|u_0-v\|_{C([0,\tilde{T}];H)}\|v\|_{L^2(0,\tilde{T};V)}\notag\\
&\leq C_{T_1}\left(\|u_0-z_{u_0}\|_{C([0,\tilde{T}];H)}+\|z_{u_0}-v\|_{C([0,\tilde{T}];H)}\right)\left(\|v-z_{u_0}\|_{L^2(0,\tilde{T};V)}+\|z_{u_0}\|_{L^2(0,\tilde{T};V)}\right)\notag\\
&\leq C_{T_1}(\alpha(\tilde{T})+r)^2  \leq 2C_{T_1}(\alpha(\tilde{T})^2+r^2)\label{eq: estimate A_0}
\end{align}
with
\[
\alpha(\tilde{T})\coloneqq \|u_0-z_{u_0}\|_{C([0,\tilde{T}];H)}\vee\|z_{u_0}\|_{L^2(0,\tilde{T};V)}.
\]
Note that $\alpha(\tilde{T})\downarrow 0$ as $\tilde{T}\downarrow 0$, since $z_{u_0}\in C([0,T_1];H)\cap L^2(0,T_1;V)$ and $z_{u_0}(0)=u_0$.

We turn to the term $F(v)$ appearing in $\tilde{f}$.
By \eqref{eq: def T_1},
\begin{align}\label{eq: z_u_0 estimate}
& \|z_{u_0}\|_{C([0,\tilde{T}];H)}\leq \|z_{u_0}-u_0\|_{C([0,T_1];H)}+\|u_0\|_H<1+\|u_0\|_H.
\end{align}
Now we apply  Lemma \ref{lem: embeddings rho_j}\ref{it:emb5} with  $\tilde{\sigma}\coloneqq \sigma^2{C}_{\|u_0\|_H+1,T_1}^{-2}$ and let  
 $\tilde{C}_{T_1,\sigma}\coloneqq C_{\|u_0\|_H+1,T_1,\tilde{\sigma}}$ denote the constant of Lemma \ref{lem: embeddings rho_j}\ref{it:emb5} corresponding to $\tilde{\sigma}$. 
Recalling \eqref{eq: z_u_0 estimate} and \eqref{eq: v estimate}, we obtain
\begin{align*}
  \|F(v)&\|_{L^2(0,\tilde{T};V^*)}\leq \|F(v)-F(z_{u_0})\|_{L^2(0,\tilde{T},V^*)}+\|F(z_{u_0})\|_{L^2(0,\tilde{T},V^*)}\notag\\
  &\leq \Big(\tilde{C}_{T_1,\sigma}\int_0^{\tilde{T}}(1+\|v\|_V^2+\|z_{u_0}\|_V^2)\|v-z_{u_0}\|_H^2\dd s\Big)^{\frac{1}{2}} +\sigma 
  \|v-z_{u_0}\|_{L^2(0,\tilde{T};V)}\notag\\
  &\qquad\qquad+\|F(z_{u_0})\|_{L^2(0,\tilde{T},V^*)}\notag\\
  &\leq \Big(\tilde{C}_{T_1,\sigma}\int_0^{\tilde{T}}(1+\|v\|_V^2+\|z_{u_0}\|_V^2)r^2\dd s\Big)^{\frac{1}{2}}
  +\sigma r 
  +\|F(z_{u_0})\|_{L^2(0,\tilde{T},V^*)}\notag\\
&\leq r\tilde{C}_{T_1,\sigma}^{\frac{1}{2}}(\tilde{T}^{\frac{1}{2}}+\|v\|_{L^2(0,\tilde{T};V)}+\|z_{u_0}\|_{L^2(0,\tilde{T};V)})+\sigma r 
+\|F(z_{u_0})\|_{L^2(0,\tilde{T},V^*)} \notag\\
&\leq r\tilde{C}_{T_1,\sigma}^{\frac{1}{2}}(\tilde{T}^{\frac{1}{2}}+\|v-z_{u_0}\|_{L^2(0,\tilde{T};V)}
+2\|z_{u_0}\|_{L^2(0,\tilde{T};V)})+\sigma r 
+\|F(z_{u_0})\|_{L^2(0,\tilde{T};V^*)} \notag\\
&\leq r\tilde{C}_{T_1,\sigma}^{\frac{1}{2}}(\tilde{T}^{\frac{1}{2}}+r+2\|z_{u_0}\|_{L^2(0,\tilde{T};V)})+\sigma r 
+\|F(z_{u_0})\|_{L^2(0,\tilde{T};V^*)}.
\end{align*} 
It follows that
\begin{align}\label{eq: estimate F(v) direct}
\|F(v)\|_{L^2(0,\tilde{T};V^*)}+ \|f\|_{L^2(0,\tilde{T};V^*)}\leq \tilde{\beta}_{T_1,\sigma}(\tilde{T},r)r+\sigma r 
+\gamma(\tilde{T}),
\end{align}
with
\begin{align*}
&\tilde{\beta}_{T_1,\sigma}(\tilde{T},r)\coloneqq \tilde{C}_{T_1,\sigma}^{\frac{1}{2}}(\tilde{T}^{\frac{1}{2}}+r+2\|z_{u_0}\|_{L^2(0,\tilde{T};V)}),\\ 
&\gamma(\tilde{T})\coloneqq (\|f\|_{L^2(0,\tilde{T};V^*)}\vee\|g\|_{L^2(0,\tilde{T};\UH)})+\|F(z_{u_0})\|_{L^2(0,\tilde{T};V^*)}.
\end{align*}
Recall that $z_{u_0}\in \MR(0,T_1)\subset L^2(0,T_1;V)$ and by Lemma \ref{lem: embeddings rho_j}\ref{it:emb4}, $F(z_{u_0})\in L^2(0,T_1;V^*)$. So  $\tilde{\beta}_{T_1,\sigma}(\tilde{T},r)\downarrow 0$ as $\tilde{T},r\downarrow0$ and $\gamma(\tilde{T})\downarrow 0$ as $\tilde{T}\downarrow0$ by the Dominated Convergence Theorem.
Combining \eqref{eq: estimate F(v) direct} and \eqref{eq: estimate A_0} and putting
\begin{align*}
&\beta_{T_1,\sigma}(\tilde{T},r)\coloneqq  \tilde{\beta}_{T_1,\sigma}(\tilde{T},r)+2C_{T_1}r,\\
&\alpha_{T_1}(\tilde{T})\coloneqq 2C_{T_1}\alpha(\tilde{T})^2+\gamma(\tilde{T}),
\end{align*}
proves  \eqref{eq: estimate tilde f sigma} for $\tilde{f}$. By symmetry in Lemma \ref{lem: embeddings rho_j}, the estimate for $\tilde{g}$ follows similarly.
\end{proof}

Before we prove Theorem \ref{th: local well posedness skeleton}, we need one more lemma, a modification of \cite[Lemma 18.2.12]{HNVWvolume3}.

\begin{lemma}\label{lem: lipschitz estimates}
Let $u_0\in H$ and suppose that $(A,B)$ satisfies Assumption \ref{ass: crit var set}.
For $\eps_1$ and $r_1$ from Lemma \ref{lem:eps T_1},  the following estimates hold for any $\tilde{T}\in(0,T_1]$, $\eps\in(0,\eps_1]$, $r\in(0,r_1]$, $v_0, w_0\in B_H(u_0,\eps)$, $v\in Z_{r,\tilde{T}}(v_0)$, $w\in Z_{r,\tilde{T}}(w_0)$, $u\in\MR(0,\tilde{T})$ and $\sigma>0$:
\begin{align*}
&\|(A_0(v)-A_0(w))v\|_{L^2(0,\tilde{T};V^*)}\vee\|(B_0(v)-B_0(w))v\|_{L^2(0,\tilde{T};\UH)}\\
& \hspace{9.5cm}\leq c_{T_1}(r+\alpha(\tilde{T}))\|v-w\|_{\MR(0,\tilde{T})},\\
&\|(A_0(u_0)-A_0(w))u\|_{L^2(0,\tilde{T};V^*)}\vee\|(B_0(u_0)-B_0(w))u\|_{L^2(0,\tilde{T};\UH)}\\
& \hspace{9.5cm}\leq c_{T_1}(r+\beta(\tilde{T}))\|u\|_{\MR(0,\tilde{T})},\\
&\|F(v)-F(w)\|_{L^2(0,\tilde{T};V^*)}\vee\|G(v)-G(w)\|_{L^2(0,\tilde{T};\UH)}\leq (\gamma_{T_1,\sigma}(\tilde{T},r)+\sigma)\|v-w\|_{\MR(0,\tilde{T})}, 
\end{align*}
where $c_{T_1}$ is a constant and $\alpha(\tilde{T}),\beta(\tilde{T}),\gamma_{T_1,\sigma}(\tilde{T},r)\downarrow 0$ as $\tilde{T},r\downarrow0$. Moreover,  $c_{T_1},\alpha(\tilde{T}),\beta(\tilde{T})$ and $\gamma_{T_1,\sigma}(\tilde{T},r)$ are independent of $v_0,w_0,v$ and $w$.  
\end{lemma}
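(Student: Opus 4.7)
The three estimates all reduce to applications of Lemma \ref{lem: embeddings rho_j} once we (a) bound the $C([0,\tilde T];H)$-norms of $v,w$ uniformly (so the constants $C_{n,T_1}$ become independent of $v_0,w_0,v,w$), and (b) bound the $L^2(0,\tilde T;V)$-norms of $v,w$ in terms of $r$ and a quantity tending to $0$ with $\tilde T$. The uniform $H$-bound is free: Lemma \ref{lem:eps T_1} and \eqref{eq: v estimate} give $\|v\|_{C([0,\tilde T];H)}, \|w\|_{C([0,\tilde T];H)}\leq \|u_0\|_H+1$ whenever $\eps\leq\eps_1$, $r\leq r_1$, $\tilde T\leq T_1$. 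Setting $n\coloneqq \|u_0\|_H+1$ and $c_{T_1}\coloneqq C_{n,T_1}$ (the constant from Assumption \ref{it:ass3}, non-decreasing in $T_1$), we may use $c_{T_1}$ as the common prefactor in all three estimates.

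\textbf{First estimate.} Apply Lemma \ref{lem: embeddings rho_j}\ref{it:emb3} with the roles of $(u,v,w)$ there played by $(v,w,v)$:
\[
\|(A_0(v)-A_0(w))v\|_{L^2(0,\tilde T;V^*)}\leq c_{T_1}\|v-w\|_{C([0,\tilde T];H)}\|v\|_{L^2(0,\tilde T;V)}.
\]
Since $v\in Z_{r,\tilde T}(v_0)$,
\[
\|v\|_{L^2(0,\tilde T;V)}\leq \|v-z_{u_0}\|_{L^2(0,\tilde T;V)}+\|z_{u_0}\|_{L^2(0,\tilde T;V)}\leq r+\alpha(\tilde T),
\]
where $\alpha(\tilde T)\coloneqq \|z_{u_0}\|_{L^2(0,\tilde T;V)}\downarrow 0$ as $\tilde T\downarrow 0$ by dominated convergence (note $z_{u_0}\in L^2(0,T_1;V)$). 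Bounding $\|v-w\|_{C([0,\tilde T];H)}\leq \|v-w\|_{\MR(0,\tilde T)}$ yields the claim. The bound for $B_0$ is identical by symmetry in Lemma \ref{lem: embeddings rho_j}\ref{it:emb3}.

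\textbf{Second estimate.} Again by Lemma \ref{lem: embeddings rho_j}\ref{it:emb3},
\[
\|(A_0(u_0)-A_0(w))u\|_{L^2(0,\tilde T;V^*)}\leq c_{T_1}\|u_0-w\|_{C([0,\tilde T];H)}\|u\|_{L^2(0,\tilde T;V)}.
\]
By the triangle inequality,
\[
\|u_0-w\|_{C([0,\tilde T];H)}\leq \|u_0-z_{u_0}\|_{C([0,\tilde T];H)}+\|z_{u_0}-w\|_{C([0,\tilde T];H)}\leq \beta(\tilde T)+r,
\]
where $\beta(\tilde T)\coloneqq \|u_0-z_{u_0}\|_{C([0,\tilde T];H)}\downarrow 0$ as $\tilde T\downarrow 0$ (since $z_{u_0}(0)=u_0$ and $z_{u_0}\in C([0,T_1];H)$). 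Using $\|u\|_{L^2(0,\tilde T;V)}\leq \|u\|_{\MR(0,\tilde T)}$ gives the result; the $B_0$-bound is analogous.

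\textbf{Third estimate.} This is the main technical point, because Lemma \ref{lem: embeddings rho_j}\ref{it:emb5} contains a free parameter $\sigma>0$ and a residual term $\sigma c_{T_1}^2\|v-w\|_{L^2(0,\tilde T;V)}^2$ that must be absorbed into $\gamma_{T_1}(\tilde T,r)^2$. Applying Lemma \ref{lem: embeddings rho_j}\ref{it:emb5},
\[
\|F(v)-F(w)\|_{L^2(0,\tilde T;V^*)}^2\leq C_{n,T_1,\sigma}\int_0^{\tilde T}(1+\|v\|_V^2+\|w\|_V^2)\|v-w\|_H^2\,\dd s+\sigma c_{T_1}^2\|v-w\|_{L^2(0,\tilde T;V)}^2.
\]
Bounding $\|v-w\|_H^2$ pointwise by $\|v-w\|_{C([0,\tilde T];H)}^2\leq \|v-w\|_{\MR(0,\tilde T)}^2$ and using
\[
\int_0^{\tilde T}(1+\|v\|_V^2+\|w\|_V^2)\,\dd s\leq \tilde T+2(r+\alpha(\tilde T))^2\eqqcolon \kappa(\tilde T,r),
\]
together with $\|v-w\|_{L^2(0,\tilde T;V)}\leq \|v-w\|_{\MR(0,\tilde T)}$, yields
\[
\|F(v)-F(w)\|_{L^2(0,\tilde T;V^*)}^2\leq \bigl(C_{n,T_1,\sigma}\,\kappa(\tilde T,r)+\sigma c_{T_1}^2\bigr)\|v-w\|_{\MR(0,\tilde T)}^2.
\]
Define
\[
\gamma_{T_1}(\tilde T,r)\coloneqq \inf_{\sigma>0}\bigl(C_{n,T_1,\sigma}\,\kappa(\tilde T,r)+\sigma c_{T_1}^2\bigr)^{1/2}.
\]
Given $\eta>0$, first choose $\sigma_0>0$ so small that $\sigma_0 c_{T_1}^2<\eta^2/2$; then, with this $\sigma_0$ fixed, $C_{n,T_1,\sigma_0}$ is a fixed constant, and $\kappa(\tilde T,r)\to 0$ as $\tilde T,r\downarrow 0$, so for $\tilde T,r$ small enough, $C_{n,T_1,\sigma_0}\kappa(\tilde T,r)<\eta^2/2$. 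Hence $\gamma_{T_1}(\tilde T,r)<\eta$, proving $\gamma_{T_1}(\tilde T,r)\downarrow 0$ as $\tilde T,r\downarrow 0$. The bound for $G$ follows from the symmetric statement in Lemma \ref{lem: embeddings rho_j}\ref{it:emb5}. Finally, $c_{T_1}$, $\alpha(\tilde T)$, $\beta(\tilde T)$ and $\gamma_{T_1}(\tilde T,r)$ depend only on $u_0$, $T_1$, $\tilde T$ and $r$, hence are independent of $v_0,w_0,v,w$ as required.
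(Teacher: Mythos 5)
Your proof is correct. The first two estimates are essentially identical to the paper's (the only cosmetic difference being the choice $n=\|u_0\|_H+1$ instead of the paper's $n=2\|u_0\|_H+2$; both choices validate the hypotheses of Lemma \ref{lem: embeddings rho_j}).

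For the third estimate you take a genuinely different route. The paper applies H\"older's inequality with exponents $p_j=\tfrac{2(\rho_j+1)}{\rho_j}$, $q_j=2(\rho_j+1)$ directly to the pointwise bound from Assumption \ref{it:ass3}, and then invokes the embedding \ref{it:emb1} of $\MR(0,\tilde T)$ into $L^{2(\rho_j+1)}(0,\tilde T;X_{\beta_j})$ to produce a prefactor that is small for small $\tilde T, r$. You instead re-use the pre-packaged estimate \ref{it:emb5} — which was designed for the Gronwall-type arguments in Section \ref{sec: proof LDP} and carries a free Young parameter $\sigma$ — and then optimize over $\sigma$ to define $\gamma_{T_1}(\tilde T,r)$ as an infimum. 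This two-step ``diagonal'' argument (choose $\sigma$ small to kill the residual $\sigma c_{T_1}^2$ term, then shrink $\tilde T,r$ to kill the $\kappa$ term) is a clean and valid way to obtain a decaying Lipschitz constant without redoing the H\"older bookkeeping. It also has a mild robustness advantage: the paper's route relies on \ref{it:emb1}, which is stated for $\rho_j>0$, and the expression $\gamma_{T_1}^j(\tilde T,r)=\tilde T^{\rho_j/(2(\rho_j+1))}+2(M_{T_1}^j(r+\alpha_j(\tilde T)))^{\rho_j}$ does not literally decay when $\rho_j=0$ (it equals $3$), whereas your argument through \ref{it:emb5} works uniformly for $\rho_j\geq 0$. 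In short: your proof is correct, slightly less self-contained in the sense that it leans on the $\sigma$-parametrized estimate \ref{it:emb5}, but arguably more economical and uniform across the admissible range of $\rho_j$.
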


\begin{proof}
Fix $n\coloneqq 2\|u_0\|_H+2$, $c_{T_1}\coloneqq C_{n,T_1}$ and note that $\|v\|_{C([0,\tilde{T}];H)}+\|w\|_{C([0,\tilde{T}];H)}\leq n$ by Lemma \ref{lem:eps T_1}. By Lemma \ref{lem: embeddings rho_j}\ref{it:emb3}, we have
\begin{align*}
\|(A_0(v)-A_0(w))v\|_{L^2(0,\tilde{T};V^*)}&\leq c_{T_1}\|v-w\|_{C([0,\tilde{T}];H)}\|v\|_{L^2(0,\tilde{T};V)}\\
&\leq c_{T_1}\|v-w\|_{\MR(0,\tilde{T})}(\|v-z_{u_0}\|_{\MR(0,\tilde{T})}+\|z_{u_0}\|_{L^2(0,\tilde{T};V)})\\
&\leq c_{T_1}\|v-w\|_{\MR(0,\tilde{T})}(r+\alpha(\tilde{T})),
\end{align*}
where $\alpha(\tilde{T})\coloneqq \|z_{u_0}\|_{L^2(0,\tilde{T};V)}\downarrow0$ as $\tilde{T}\downarrow0$.
Similarly,
\begin{align*}
\|(A_0(u_0)-A_0(w))u\|_{L^2(0,\tilde{T};V^*)}&\leq c_{T_1}\|u_0-w\|_{C([0,\tilde{T}];H)}\|u\|_{L^2(0,\tilde{T};V)}\\
& \leq c_{T_1}(\|u_0-z_{u_0}\|_{C([0,\tilde{T}];H)}+\|z_{u_0}-w\|_{\MR(0,\tilde{T})})\|u\|_{\MR(0,\tilde{T})}\\
& \leq c_{T_1}(\beta(\tilde{T})+r)\|u\|_{\MR(0,\tilde{T})},
\end{align*}
where $\beta(\tilde{T})\coloneqq \|u_0-z_{u_0}\|_{C([0,\tilde{T}];H)}\downarrow 0$ as $\tilde{T}\downarrow0$ since $z_{u_0}(0)=u_0$.

Now we turn to $F$. By Lemma \ref{lem: embeddings rho_j}\ref{it:emb5}, we have for any $\tilde{\sigma}>0$:
\begin{align*}
 \|F(v)-F(w)\|_{L^2(0,\tilde{T},V^*)}^2&\leq  
 C_{n,T_1,\tilde{\sigma}}\|v-w\|_{\MR(0,\tilde{T})}^2\int_0^{\tilde{T}} 1+\|v\|_V^2+\|w\|_V^2\dd t\\
 &\qquad +\tilde{\sigma} C_{n,T_1}^2\|v-w\|_{\MR(0,\tilde{T})}^2.
\end{align*}
Moreover,
\[
\|v\|_{L^2(0,\tilde{T};V^*)}\leq \|v-z_{u_0}\|_{L^2(0,\tilde{T};V^*)}+\|z_{u_0}\|_{L^2(0,\tilde{T};V^*)}\leq r +\|z_{u_0}\|_{L^2(0,\tilde{T};V^*)}
\]
and similarly for $w$. Applying the above with $\tilde{\sigma}\coloneqq \sigma^2 C_{n,T_1}^{-2}$, putting $\tilde{C}_{T_1,\sigma}\coloneqq C_{n,T_1,\tilde{\sigma}}$ and taking square roots, we find 
\begin{align*}
 \|F(v)-F(w)\|_{L^2(0,\tilde{T},V^*)}&\leq  
 \tilde{C}_{T_1,{\sigma}}^{\frac{1}{2}}\|v-w\|_{\MR(0,\tilde{T})}(\tilde{T}^{\frac{1}{2}}+2r+2\|z_{u_0}\|_{L^2(0,\tilde{T};V^*)})
 +{\sigma} \|v-w\|_{\MR(0,\tilde{T})}.
\end{align*}
The desired estimate thus holds with $\gamma_{T_1,\sigma}(\tilde{T},r)\coloneqq \tilde{C}_{T_1,{\sigma}}^{\frac{1}{2}}(\tilde{T}^{\frac{1}{2}}+2r+2\|z_{u_0}\|_{L^2(0,\tilde{T};V^*)})$. 

By symmetry in Assumption \ref{it:ass3}, $B_0$ and $G$ can be estimated similarly. 
\end{proof}

We are now ready to prove Theorem \ref{th: local well posedness skeleton}. The proof is adapted from \cite[Th.\ 18.2.6]{HNVWvolume3}.

\begin{proof}[Proof of Theorem \ref{th: local well posedness skeleton}]
Let  $\eps_1,r_1>0$ be as in Lemma \ref{lem:eps T_1} and let $\tilde{T}\in(0,T_1]$, $\eps\in(0,\eps_1]$, $r\in(0,r_1]$. As above, define $\Psi_{v_0}\colon \MR(0,\tilde{T})\to \MR(0,\tilde{T})$ by $\Psi_{v_0}(v)\coloneqq u$, where $u$ is the unique strong solution to \eqref{eq:contraction map def}. Recall that $u$ solves \eqref{eq: local well-posed} if and only if $\Psi_{v_0}(u)=u$ and recall that $Z_{r,\tilde{T}}(v_0)$ defined by \eqref{eq:def Z_rT} is closed in $\MR(0,\tilde{T})$, hence complete.
We show that for $\tilde{T},\eps,r$ small enough, the mapping $\Psi_{v_0}$ maps $Z_{r,\tilde{T}}(v_0)$ to itself and is contractive. The Banach fixed point theorem then gives existence of a unique fixed point in $Z_{r,\tilde{T}}(v_0)$, hence existence of a solution to \eqref{eq: local well-posed}. We will extend the uniqueness within $Z_{r,\tilde{T}}(v_0)$ to uniqueness in $\MR(0,\tilde{T})$.

Let $v\in Z_{r,\tilde{T}}(v_0)$ and let $u\coloneqq \Psi_{v_0}(v)$. Let $z_{u_0}$ be defined as in \eqref{eq:def z_v_0} and define $\tilde{f},\tilde{g}$ by \eqref{eq:def tilde f tilde g}.
Note that $u-z_{u_0}=\Psi_{v_0-u_0}(v)$ , so by \eqref{def: maxreg} and \eqref{eq: estimate tilde f sigma}, we have for any $\sigma>0$:
 \begin{align*}
  \|u-z_{u_0}\|_{\MR(0,\tilde{T})}\leq \|u-z_{u_0}\|_{\MR(0,T_1)}
  &\leq K_{T_1}\left(\|v_0-u_0\|_H+\|\tilde{f}\|_{L^2(0,T_1;V^*)}+\|\tilde{g}\|_{L^2(0,T_1;\UH)}\right)\\
  &\leq K_{T_1}\left(\eps+2\alpha_{T_1}(\tilde{T})+2\beta_{T_1,\sigma}(\tilde{T},r)r+2\sigma r\right), 
 \end{align*}
 with $\alpha_{T_1}(\tilde{T}),\beta_{T_1,\sigma}(\tilde{T},r)\downarrow 0$ as $\tilde{T},r\downarrow 0$. Recall that $K_{T_1}$ from \eqref{def: maxreg} only depends on $T_1$, $T$, $\|u_0\|_H$ and $\psi$, not on $v_0$ or $v$. 
Fixing first $\sigma\coloneqq (4K_{T_1})^{-1}$, we find 
 \[
 \|u-z_{u_0}\|_{\MR(0,\tilde{T})}\leq \frac{r}{2}+K_{T_1}\left(\eps+2\alpha_{T_1}(\tilde{T})+2\beta_{T_1,\sigma}(\tilde{T},r)r\right).
 \]
For all small enough $r$ and all small enough $\tilde{T},\eps$ (dependent on $r$), one thus  has $\|u-z_{u_0}\|_{\MR(0,\tilde{T})}\leq r$, i.e. $\Psi_{v_0}(v)=u\in Z_{r,\tilde{T}}(v_0)$.
In particular, for all such $r,\tilde{T},\eps$ and for all $v_0\in B_H(u_0,\eps)$,  $\Psi_{v_0}$ maps $Z_{r,\tilde{T}}(v_0)$ to itself.

Now we show that for some (even smaller) $r,\tilde{T},\eps>0$, the map $\Psi_{v_0}\colon Z_{r,\tilde{T}}(v_0)\to Z_{r,\tilde{T}}(v_0)$ is contractive for all $v_0\in B_H(u_0,\eps)$ and we prove continuous dependence on the initial value $v_0$.
Let $v_0,w_0\in B_H(u_0,\eps)$, $v\in Z_{r,\tilde{T}}(v_0)$, $w\in Z_{r,\tilde{T}}(w_0)$ and note that $u\coloneqq \Psi_{v_0}(v)-\Psi_{w_0}(w)$ is a strong solution to
\begin{equation*}
  \begin{cases}
    &u'+\tilde{A}(u_0)u=(\tilde{A}(u_0)-\tilde{A}(v))v-(\tilde{A}(u_0)-\tilde{A}(w))w+\tilde{F}(v)-\tilde{F}(w)\quad \text{on } [0,\tilde{T}],\\
    &u(0)=v_0-w_0.
  \end{cases}
\end{equation*}
Hence, by \eqref{def: maxreg}:
\[
\|u\|_{\MR(0,\tilde{T})}\leq  K_{T_1}(\|v_0-w_0\|_H+\|\bar{f}\|_{L^2(0,\tilde{T};V^*)}+\|\bar{g}\|_{L^2(0,\tilde{T};\UH)}),
\]
with $\bar{f}\coloneqq (A_0(u_0)-A_0(v))v-(A_0(u_0)-A_0(w))w + F(v)-F(w)$ and $\bar{g}\coloneqq (B_0(u_0)-B_0(v))v-(B_0(u_0)-B_0(w))w + G(v)-G(w)$. We have by Lemma \ref{lem: lipschitz estimates}, for any $\sigma>0$: 
\begin{align*}
\|\bar{f}\|_{L^2(0,\tilde{T};V^*)}&\leq \|(A_0(v)-A_0(w))v\|_{L^2(0,\tilde{T};V^*)} +\|(A_0(u_0)-A_0(w))(v-w)\|_{L^2(0,\tilde{T};V^*)}\\
&\qquad\qquad+\|F(v)-F(w)\|_{L^2(0,\tilde{T};V^*)}\\
&\leq \Big(c_{T_1}(2r+\alpha(\tilde{T})+\beta(\tilde{T}))+\gamma_{T_1,\sigma}(\tilde{T},r)+\sigma\Big)\|v-w\|_{\MR(0,\tilde{T})},
\end{align*}
with $\alpha(\tilde{T}), \beta(\tilde{T}),\gamma_{T_1,\sigma}(\tilde{T},r)\downarrow 0$ as $\tilde{T},r\downarrow 0$. The same estimate applies to $\|\bar{g}\|_{L^2(0,\tilde{T};\UH)}$ by symmetry. Fixing $\sigma\coloneqq (8K_{T_1})^{-1}$ and putting $C(\tilde{T},r)\coloneqq 2\big(c_{T_1}(2r+\alpha(\tilde{T})+\beta(\tilde{T}))+\gamma_{T_1,\sigma}(\tilde{T},r)\big)$, we conclude that
\[
\|u\|_{\MR(0,\tilde{T})}\leq K_{T_1}\|v_0-w_0\|_H+\big(K_{T_1}C(\tilde{T},r)+\frac{1}{4}\big)\|v-w\|_{\MR(0,\tilde{T})},
\]
with $C(\tilde{T},r)\downarrow0$ as $\tilde{T},r\downarrow0$.
For all small enough $r,\tilde{T},\eps$ we thus have $K_{T_1}C(\tilde{T},r)\leq \frac{1}{4}$ and
\begin{equation}\label{eq: Psi contraction}
\|\Psi_{v_0}(v)-\Psi_{w_0}(w)\|_{\MR(0,\tilde{T})}=\|u\|_{\MR(0,\tilde{T})}\leq K_{T_1}\|v_0-w_0\|_H+\frac{1}{2}\|v-w\|_{\MR(0,\tilde{T})}.
\end{equation} 
Application to $w_0=v_0$ shows that $\Psi_{v_0}\colon Z_{r,\tilde{T}}(v_0)\to Z_{r,\tilde{T}}(v_0)$ is a strict contraction. A unique fixed point is thus guaranteed by the Banach fixed point theorem.
Now let $u_{v_0}\in Z_{r,\tilde{T}}(v_0)$ and $u_{w_0}\in Z_{r,\tilde{T}}(w_0)$ be fixed points of $\Psi_{v_0}$ and $\Psi_{w_0}$, respectively. Then \eqref{eq: Psi contraction} yields
\[
\|u_{v_0}-u_{w_0}\|_{\MR(0,\tilde{T})}=\|\Psi_{v_0}(u_{v_0})-\Psi_{w_0}(u_{w_0})\|_{\MR(0,\tilde{T})}\leq K_{T_1}\|v_0-w_0\|_H+\frac{1}{2}\|u_{v_0}-u_{w_0}\|_{\MR(0,\tilde{T})}.
\]
Consequently, \eqref{eq: ct dependence local well-posed} holds with $C\coloneqq 2K_{T_1}>0$.

It remains to show that uniqueness not only holds within $Z_{r,\tilde{T}}(v_0)$ but also within the larger space $\MR(0,\tilde{T})$.
Let $v,\tilde{v}\in \MR(0,\tilde{T})$ be strong solutions to \eqref{eq: local well-posed} and suppose that $v\neq \tilde{v}$. Then we have $s\coloneqq \inf\{t\in[0,\tilde{T}]:v(t)\neq \tilde{v} \text{ in } H\} \in[0,\tilde{T})$ since $\MR(0,\tilde{T})=C([0,\tilde{T}];H)\cap L^2(0,\tilde{T};V)$ and $V\into H$ is injective.  Moreover, $v(s)=\tilde{v}(s)\eqqcolon w_0$ as $v,\tilde{v}\in C([0,\tilde{T}];H)$ and $v(\cdot+s)$ and $\tilde{v}(\cdot+s)$ are strong solutions to
\begin{equation}\label{eq: shifted local}
\begin{cases}
    &u'+\tilde{A}(u)u=\tilde{F}(u) \quad\text{on } [0,\tilde{T}-s],\\
    &u(0)=w_0.
\end{cases}
\end{equation}
Now, by the first part of the proof, there exist $r_0,T_0>0$ such that \eqref{eq: shifted local} has a unique solution in $Z_{r,\delta}(w_0)$ for all $r\in(0,r_0]$ and $\delta\in(0,T_0]$ (take $u_0=v_0=w_0$).
Fix
\[
\delta\coloneqq \sup\{t\in[0,\min\{T_0,\tilde{T}-s\}):\|v(\cdot+s)-z_{w_0}\|_{\MR(0,t)}\vee\|\tilde{v}(\cdot+s)-z_{w_0}\|_{\MR(0,t)}<r_0\}
\]
and note that $\delta\in (0,\min\{T_0,\tilde{T}-s\}]$ since $v(0+s)=\tilde{v}(0+s)=w_0=z_{w_0}(0)$.
In particular, $\delta\in(0,T_0]$ and $v(\cdot+s),\tilde{v}(\cdot+s)\in Z_{r_0,T}(w_0)$ by definition of $\delta$.
Uniqueness of solutions in $Z_{r_0,\delta}(w_0)$ implies that $v(\cdot+s)=\tilde{v}(\cdot+s)$ on $[0,\delta]$.
Therefore $v=\tilde{v}$ on $[0,s+\delta]$, contradicting the definition of $s$.
We conclude that $v=\tilde{v}$.
\end{proof}

\begin{remark}
Observe that the local well-posedness could also have been proved under mere coercivity of $A_0$ instead of coercivity of $(A_0,B_0)$ (Assumption \ref{it:ass2}). Indeed, in the current section, we have only used Corollary \ref{cor: skeleton MR linearized crit var setting} and the estimates from Assumption \ref{it:ass3}. Now, the proof of Corollary \ref{cor: skeleton MR linearized crit var setting} continues when we only assume $\<A_0(t,u)v,v\?\geq \theta_{n,T}\|v\|_V^2-M_{n,T}\|v\|_H^2$,
  since then, combined with Assumption \ref{it:ass3} and Young's inequality:
      \begin{align*}
    \<\bar{A}(t)v,v\?&\geq \<A_0(t,w(t))v,v\?-\sigma\nn B_0(t,w(t))v \nn_H^2-C_\sigma\|\psi(t)\|_U^2\|v\|_H^2\\
    &\geq \theta_{n,T}\|v\|_V^2-({M}_{n,T}+C_\sigma\|\psi(t)\|_U^2)\|v\|_H^2-\sigma C_{n,T}^2\|v\|_V^2.
  \end{align*}
  Putting $\sigma\coloneqq \theta_{n,T}(2C_{n,T}^2)^{-1}$, the required coercivity \eqref{eq: coercivity L_1} for $\bar{A}$ follows.
\end{remark}

\subsection{Global well-posedness}

Similar to \cite[Chap.\ 5]{pruss16} and \cite[\S 18.2]{HNVWvolume3}, we will extend Theorem \ref{th: local well posedness skeleton} to a global well-posedness result by means of maximal solutions and a blow-up criterion.

\begin{definition}\label{def: maximal solution}
For $T\in(0,\infty]$, we define
\[
\MR_{\mathrm{loc}}(0,T)\coloneqq \{u\colon [0,T)\to H: u|_{[0,\tilde{T}]}\in\MR(0,\tilde{T}) \text{ for all } \tilde{T}\in[0,T)\}.
\]
A \emph{maximal solution} to \eqref{eq: skeleton eq} is a pair $(u_*,T_*)\in \MR_{\mathrm{loc}}(0,T_*) \times (0,\infty]$  such that
\begin{enumerate}[label=(\roman*)]
  \item for all $T\in(0,T_*)$, $u_*|_{[0,T]}$ is a strong solution to \eqref{eq: skeleton eq},
  \item for any $T>0$ and for any strong solution $u\in\MR(0,T)$ to \eqref{eq: skeleton eq} it holds that $T\leq T_*$ and $u=u_*$ on $[0,T]$.
\end{enumerate}
\end{definition}

Note that maximal solutions are unique by definition.
The proof of the next proposition is adapted from \cite[Th.\ 18.2.14, Th.\ 18.2.15]{HNVWvolume3} and \cite{pruss16}.

\begin{proposition}[Blow-up criterion]\label{prop: blow up criterion}
  Let $x\in H$ and $\psi\in L^2_{\mathrm{loc}}(\R_+;U)$. Suppose that $(A,B)$ satisfies Assumption \ref{ass: crit var set}. Then  equation \eqref{eq: skeleton eq} has a maximal solution $(u_*,T_*)$. Moreover, if $T_*<\infty$ and $\sup_{T\in[0,T_*)}\|u_*\|_{L^2(0,T;V)}<\infty$, then $\lim_{t\uparrow T_*}u_*(t)$ does not exist in $H$.
\end{proposition}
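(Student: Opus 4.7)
The plan is to split the proof into two parts: constructing the maximal solution by gluing local solutions, and then establishing the blow-up criterion by extending $u_*$ past $T_*$ under the negation of the conclusion, contradicting maximality.

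For the first part, I would set
\[
T_* \coloneqq \sup\{T > 0 : \text{there exists a strong solution } u \in \MR(0,T) \text{ to \eqref{eq: skeleton eq}}\},
\]
which is positive by Theorem \ref{th: local well posedness skeleton}. To paste solutions consistently I need uniqueness on arbitrary common intervals, not just the short interval of Theorem \ref{th: local well posedness skeleton}. Given two strong solutions on $[0,T]$, the subset of $[0,T]$ on which they agree is nonempty (containing $0$), closed by continuity in $H$, and open because at any $s$ of agreement, Theorem \ref{th: local well posedness skeleton} applied to the time-shifted skeleton equation with common initial value $u(s) = v(s)$ extends the agreement to a neighborhood of $s$. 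Hence solutions coincide on overlaps and paste to a unique $u_* \in \MR_{\mathrm{loc}}(0,T_*)$, giving the maximal pair $(u_*, T_*)$.

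For the blow-up criterion, I would argue by contradiction: assume $T_* < \infty$, $\sup_{T \in [0, T_*)} \|u_*\|_{L^2(0,T;V)} < \infty$, and $x_* \coloneqq \lim_{t \uparrow T_*} u_*(t)$ exists in $H$. The first step is to promote $u_*$ to a strong solution on the \emph{closed} interval $[0, T_*]$. Monotone convergence yields $u_* \in L^2(0, T_*; V)$, and the assumed limit gives a continuous extension $u_* \in C([0, T_*]; H)$, so $u_* \in \MR(0, T_*)$. Lemma \ref{lem: embeddings rho_j} applied on the full interval then produces $A(\cdot, u_*(\cdot)) \in L^2(0, T_*; V^*)$ and, via Cauchy--Schwarz in $t$, $B(\cdot, u_*(\cdot))\psi(\cdot) \in L^1(0, T_*; H)$. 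Passing to the limit $t \uparrow T_*$ in the integral form of \eqref{eq: skeleton eq} (dominated convergence for the integrals, assumption for $u_*(t)$) shows $u_*$ satisfies the skeleton equation on all of $[0, T_*]$. Next, having $u_* \in \MR(0, T_*)$ as a strong solution with $u_*(T_*) = x_*$, I would apply Theorem \ref{th: local well posedness skeleton} to the time-shifted skeleton equation with base point $x_*$ (shifted coefficients $A(\cdot + T_*, \cdot)$, $B(\cdot + T_*, \cdot)$ and control $\psi(\cdot + T_*)$ still satisfy Assumption \ref{ass: crit var set} since the latter is pointwise in $t$ and $\psi \in L^2_{\mathrm{loc}}(\R_+; U)$), producing $\delta > 0$ and a strong solution $\tilde u \in \MR(0, \delta)$ with $\tilde u(0) = x_*$. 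Defining $\hat u \coloneqq u_*$ on $[0, T_*]$ and $\hat u(t) \coloneqq \tilde u(t - T_*)$ on $[T_*, T_* + \delta]$ gives $\hat u \in \MR(0, T_* + \delta)$ (continuity at $T_*$ follows from $\tilde u(0) = x_* = u_*(T_*)$), and adding the integral equation on $[0, T_*]$ to the translated integral equation for $\tilde u$ on $[T_*, T_* + \delta]$ shows $\hat u$ is a strong solution on $[0, T_* + \delta]$. Since $T_* + \delta > T_*$, this contradicts the definition of $T_*$.

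The main obstacle I anticipate is the first step of the contradiction: promoting $u_*$ from a solution on every subinterval $[0, T]$ with $T < T_*$ to a genuine strong solution on the closed interval $[0, T_*]$. This is precisely where the $L^2(V)$-boundedness hypothesis enters decisively, since without it we could not conclude $u_* \in L^2(0, T_*; V)$ and hence could not invoke Lemma \ref{lem: embeddings rho_j} on the full interval to pass to the limit $t \uparrow T_*$ in the integral equation. Once $u_*$ is a strong solution on $[0, T_*]$, the extension by local well-posedness at $T_*$ and the concatenation are routine and mirror the patching used when constructing $u_*$.
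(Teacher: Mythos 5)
Your proposal is correct and follows essentially the same strategy as the paper: existence of the maximal solution is obtained from the local well-posedness Theorem \ref{th: local well posedness skeleton} together with the uniqueness-on-arbitrary-intervals argument already present at the end of that theorem's proof, and the blow-up criterion is established by contradiction, using the bounded $L^2(V)$-norm and the assumed limit $x_*$ to place $u_*$ in $\MR(0,T_*)$, extending past $T_*$ via the time-shifted local well-posedness with initial value $x_*$, and concatenating. The paper compresses the step of checking that the glued function is a strong solution on $[0,T_*+\delta]$, while you spell it out explicitly, but this is only a difference of exposition, not of method.
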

\begin{proof}
The proof of Theorem \ref{th: local well posedness skeleton} (with $u_0=v_0=x$) shows that there exists a local solution and that any strong solution on any finite time interval is unique. Hence, there exists a maximal solution $(u_*,T_*)$ for some $T_*\in(0,\infty]$ and $u_*\in \MR_{\mathrm{loc}}(0,T_*)$.

Suppose that $T_*<\infty$, $\sup_{T\in[0,T_*)}\|u_*\|_{L^2(0,T;V)}<\infty$ and $u^*\coloneqq \lim_{t\uparrow T_*}u_*(t)$ does exist in $H$. We will derive a contradiction. Note that the second assumption implies $u_*\in L^2(0,T_*;V)$.

By Theorem \ref{th: local well posedness skeleton}, there exists $\delta>0$ and a strong solution $u\in\MR(T_*,T_*+\delta)$ to
\begin{equation}\label{eq: loc sol in cond x}
  \begin{cases}
    &u'+\tilde{A}(u)u=\tilde{F}(u)\quad \text{on } [T_*,T_*+\delta],\\
    &u(T_*)=u^*,
  \end{cases}
\end{equation}
where we use that the translated pair $(A(T_*+\cdot,\cdot), B(T_*+\cdot,\cdot))$
also satisfies Assumption \ref{ass: crit var set}.
Then
\[
\bar{u}(t)\coloneqq   \begin{cases}
    u_*(t), \qquad &t\in [0,T_*),  \\
    u(t), &t\in [T_*,T_*+{\delta}]
  \end{cases}
\]
satisfies $\bar{u}\in \MR(0,T_*+{\delta})$ and $\bar{u}$ is a strong solution to \eqref{eq: local well-posed} on $[0,T_*+{\delta}]$, contradicting maximality of $(u_*,T_*)$.
\end{proof}

Using the blow-up criterion, we finally prove global well-posedness for the skeleton equation. Besides Assumption \ref{ass: crit var set}, we now also assume the coercivity condition \eqref{eq: coercivity condition (A,B)} for the pair $(A,B)$. This condition has not been used so far, but it is also needed for the global well-posedness result for the stochastic evolution equation \cite[Th.\ 3.5]{AV22variational}, see Theorem \ref{th: original stoch ev eq global well-posedness}.

\begin{theorem}[Global well-posedness skeleton equation]\label{th: global well posedness skeleton}
Suppose that $(A,B)$ satisfies Assumption \ref{ass: crit var set} and coercivity \eqref{eq: coercivity condition (A,B)}.
Then for any $\psi \in L_{\mathrm{loc}}^2(\R_+;U)$, $x\in H$ and $T>0$, there exists a unique strong solution $u\in\MR(0,T)$ to \eqref{eq: skeleton eq}. Moreover,
\begin{equation}\label{eq: MR estimate global sol}
  \|u\|_{\MR(0,T)}\leq (2+\frac{1}{\theta})^{\frac{1}{2}}\left(\|x\|_H+\sqrt{2}\|\phi\|_{L^2(0,T)} \right)\exp[ MT+\frac{1}{2}\|\psi\|_{L^2(0,T;U)}^2],
\end{equation}
where $\theta, M>0$ and $\phi\in L^2(0,T)$ are such that \eqref{eq: coercivity condition (A,B)} holds for $t\in[0,T]$.
\end{theorem}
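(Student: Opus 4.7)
The plan is to combine the blow-up criterion of Proposition \ref{prop: blow up criterion} with a global $\MR$-bound derived from coercivity and the deterministic chain rule. Let $(u_*, T_*)$ be the maximal solution of Proposition \ref{prop: blow up criterion}; it is enough to prove that $T_* > T$ for each finite $T > 0$ and that \eqref{eq: MR estimate global sol} holds on $[0, T\wedge T_*)$.

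First I would establish the fundamental energy estimate. Fix $T > 0$ and $t \in [0, T\wedge T_*)$. Applying the chain rule to $\|u_*(\cdot)\|_H^2$ gives
\[
\|u_*(t)\|_H^2 = \|x\|_H^2 + 2\int_0^t \<-A(s,u_*(s)) + B(s,u_*(s))\psi(s),\, u_*(s)\?\, ds.
\]
The term $-\<A(s,u_*(s)), u_*(s)\?$ is controlled from above by coercivity \eqref{eq: coercivity condition (A,B)}, while Cauchy--Schwarz and Young's inequality yield
\[
\bigl(B(s,u_*(s))\psi(s), u_*(s)\bigr)_H \leq \tfrac{1}{2}\nn B(s,u_*(s))\nn_H^2 + \tfrac{1}{2}\|\psi(s)\|_U^2\|u_*(s)\|_H^2,
\]
so that the two copies of $\tfrac{1}{2}\nn B\nn_H^2$ cancel and one arrives at
\[
\|u_*(t)\|_H^2 + 2\theta \int_0^t \|u_*(s)\|_V^2\, ds \leq \|x\|_H^2 + 2\|\phi\|_{L^2(0,T)}^2 + \int_0^t \bigl(2M + \|\psi(s)\|_U^2\bigr)\|u_*(s)\|_H^2\, ds.
\]
Writing $\Phi(t)$ for the left-hand side, I would drop the $L^2(V)$-term on the right and apply Gronwall to conclude $\Phi(t) \leq (\|x\|_H^2 + 2\|\phi\|_{L^2(0,T)}^2)\exp(2MT + \|\psi\|_{L^2(0,T;U)}^2)$. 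Combining $\|u_*\|_{C([0,t];H)}^2 \leq \Phi(t)$ and $\|u_*\|_{L^2(0,t;V)}^2 \leq (2\theta)^{-1}\Phi(t)$ via $a + b \leq \sqrt{2(a^2+b^2)}$ produces exactly \eqref{eq: MR estimate global sol} uniformly on $[0, T\wedge T_*)$.

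To close the argument I must exclude $T_* \leq T$. The uniform bound above yields $\sup_{\tilde{T} < T_*}\|u_*\|_{L^2(0,\tilde{T};V)} < \infty$, so by Proposition \ref{prop: blow up criterion} the limit $\lim_{t\uparrow T_*} u_*(t)$ cannot exist in $H$. I would rule this out by verifying that $u_*$ extends continuously to $T_*$. With the finite quantity $n := \sup_{\tilde{T} < T_*}\|u_*\|_{C([0,\tilde{T}];H)}$, Lemma \ref{lem: embeddings rho_j} places $A_0(\cdot, u_*)u_*, F(\cdot, u_*), f$ in $L^2(0,T_*;V^*)$ and $B_0(\cdot,u_*)u_*, G(\cdot,u_*), g$ in $L^2(0,T_*;\UH)$; pairing the latter with $\psi \in L^2(0,T_*;U)$ via Cauchy--Schwarz lands the $\psi$-terms in $L^1(0,T_*;H)$. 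Hence $u_*' \in S$ on $[0, T_*]$ and the sum-space Lions--Magenes-type embedding (Pardoux's Theorem 2.1, which underpins Definition \ref{def: spaces for method of continuity}) forces $u_* \in C([0, T_*];H)$, contradicting the blow-up criterion. Thus $T_* = \infty$; uniqueness on every $[0, T]$ is inherited from Theorem \ref{th: local well posedness skeleton}.

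The main obstacle is precisely this continuity-at-$T_*$ upgrade: a classical $L^2(V^*)$-Lions--Magenes argument is unavailable here because $B(\cdot, u_*)\psi$ is only $L^1(H)$-integrable, which is exactly why the whole earlier subsection was devoted to the sum-space $S = L^2(V^*) + L^1(H)$ and the companion space $E$. Once this embedding is granted, the rest reduces to a routine coercivity/Gronwall computation.
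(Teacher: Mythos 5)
Your overall strategy — energy estimate via the deterministic chain rule and coercivity, Gronwall via Lemma~\ref{lem: gronwall consequence}, and then ruling out finite-time blow-up by showing $\lim_{t\uparrow T_*}u_*(t)$ exists — matches the paper's proof. The one place where you deviate is the continuity-at-$T_*$ upgrade. You invoke directly the sum-space embedding that $u_*\in L^2(0,T_*;V)$ together with $u_*'\in S=L^2(0,T_*;V^*)+L^1(0,T_*;H)$ forces a continuous $H$-valued representative, attributing this to Pardoux. The paper instead takes a slightly more indirect route: having shown $F(u_*)\in L^2(0,T_*;V^*)$ and $G(u_*)\in L^2(0,T_*;\UH)$ (via Lemma~\ref{lem: embeddings rho_j}\ref{it:emb4} and monotone convergence), it applies Corollary~\ref{cor: skeleton MR linearized crit var setting} with $w\coloneqq u_*$ to manufacture a genuine solution $\bar{u}\in\MR(0,T_*)$ of the linearized problem on the \emph{closed} interval $[0,T_*]$, then identifies $\bar{u}$ with $u_*$ on every $[0,T]\subset[0,T_*)$ by uniqueness of the linear problem, so that $\lim_{t\uparrow T_*}u_*(t)=\bar{u}(T_*)\in H$.

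Both arguments are valid, and you correctly pinpoint the reason the sum-space machinery matters (only $L^1(H)$-integrability of $B(\cdot,u_*)\psi$). The practical difference is that the paper's route stays entirely inside tools it has explicitly stated (Corollary~\ref{cor: skeleton MR linearized crit var setting} and uniqueness), whereas your route relies on a Lions–Magenes-type trace/continuity embedding for the sum space $S$ that the paper never isolates as a standalone lemma — it is only used implicitly, buried in Pardoux's Theorem~2.1 via Proposition~\ref{prop: surjectivity L_0}. Strictly as a matter of referencing, it would be cleaner to either state that embedding explicitly (it is indeed a theorem in Pardoux's thesis) or adopt the paper's device of re-solving the linear problem, which gets continuity for free from the definition of $E$. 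Everything else — the derivation of \eqref{eq: MR estimate global sol}, the cancellation of $\tfrac12\nn B\nn_H^2$ against the cross term, Gronwall, the Monotone Convergence upgrade to $L^2(0,T_*)$ — agrees with the paper.
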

\begin{proof}
By Proposition \ref{prop: blow up criterion} we have a maximal solution $(u_*,T_*)$ to  \eqref{eq: skeleton eq}. If $T_*=\infty$, then well-posedness for every $T>0$ follows.
Suppose that $T_*<\infty$. We will derive a contradiction.
Let $\theta, M>0$ and $\phi\in L^2(0,T_*)$ be such that the coercivity condition \eqref{eq: coercivity condition (A,B)} holds with $T=T^*$. By definition of the maximal solution, $u_*|_{[0,T]}$ is a strong solution to  \eqref{eq: skeleton eq} on $[0,T]$ for all $T\in[0,T_*)$.
The chain rule \eqref{eq: Ito pardoux deterministic} thus gives for all $t\in [0,T_*)$:
 \begin{align*}
   \|u_*(t)\|_H^2&=\|x\|_H^2+2\int_0^t \<-A(s,u_*(s)),u_*(s)\?+\<B(s,u_*(s))\psi(s),u_*(s)\? \dd s \\
    &\leq \|x\|_H^2+2\int_0^t -\frac{1}{2}\nn B(s,u_*(s))\nn_H^2 -\theta\|u_*(s)\|_V^2+M\|u_*(s)\|_H^2+|\phi(s)|^2 \\
    &\qquad\qquad\qquad\qquad+\nn B(s,u_*(s))\nn_H\|\psi(s)\|_U\|u_*(s)\|_H \dd s \\
    &\leq \|x\|_H^2+2\int_0^t -\frac{1}{2}\nn B(s,u_*(s))\nn_H^2 -\theta\|u_*(s)\|_V^2+M\|u_*(s)\|_H^2+|\phi(s)|^2  \\
    &\qquad\qquad\qquad\qquad+\frac{1}{2}\nn B(s,u_*(s))\nn_H^2+\frac{1}{2}\|\psi(s)\|_U^2\|u_*(s)\|_H^2 \dd s  \\
    &=-2\theta\|u_*\|_{L^2(0,t;V)}^2+\|x\|_H^2+2\|\phi\|_{L^2(0,t)}^2+\int_0^t (2M+\|\psi(s)\|_U^2)\|u_*(s)\|_H^2\dd s.
  \end{align*}
  By Lemma \ref{lem: gronwall consequence}(Gronwall), we obtain for all $T\in(0,T_*)$:
  \[
  \|u_*\|_{C([0,T];H)}^2+\|u_*\|_{L^2(0,T;V)}^2\leq (1+\frac{1}{2\theta})\left(\|x\|_H^2+2\|\phi\|_{L^2(0,T)}^2\right)\exp[2MT+\|\psi\|_{L^2(0,T;U)}^2],
  \]
  hence
  \begin{align}\label{eq: MR estimate global sol prep}
  \|u_*\|_{\MR(0,T)} &\leq (2+\frac{1}{\theta})^{\frac{1}{2}}\left(\|x\|_H+\sqrt{2}\|\phi\|_{L^2(0,T)} \right)\exp[ MT+\frac{1}{2}\|\psi\|_{L^2(0,T;U)}^2]\eqqcolon K(T),
  \end{align}
  where $K\colon [0,T_*]\to \R_+$ is increasing.
Applying Lemma \ref{lem: embeddings rho_j}\ref{it:emb4}
  with $n=K(T_*)<\infty$ we find that $F(u_*)\in L^2(0,t;V^*)$ for all $t\in (0,T_*)$ and $L\coloneqq \sup_{t\in[0,T_*)}\|F(u_*)\|_{L^2(0,t;V^*)}<\infty$.
  Thus, by the Monotone Convergence Theorem, $\|F(u_*)\|_{L^2(0,T_*;V^*)}\leq L<\infty$.
  Similarly, $G(u_*)\in L^2(0,T_*;\UH)$.
 Now we apply Corollary \ref{cor: skeleton MR linearized crit var setting} with $T\coloneqq T^*$, $w\coloneqq u_*\in C([0,T_*);H)\subset L^\infty(0,T;H)$ (extend by $w(T)\coloneqq u_*(0)$ on the Lebesgue null set $\{T\}$), $n\coloneqq K(T_*)$
 and $\bar{f}\coloneqq F(u_*)+f\in L^2(0,T_*;V^*)$, $\bar{g}\coloneqq G(u_*)+g\in L^2(0,T_*;\UH)$.
Corollary \ref{cor: skeleton MR linearized crit var setting} gives existence of a strong solution $\bar{u}\in \MR(0,T_*)$ to \eqref{eq: skeleton eq} on $[0,T_*]$.
By uniqueness of the maximal solution, it follows that $u_*|_{[0,T]}=\bar{u}|_{[0,T]}$ for all $T\in[0,T_*)$. Hence $\lim_{t\uparrow T_*}u_*(t)=\lim_{t\uparrow T_*}\bar{u}(t)=\bar{u}(T_*)\in H$, contradicting Proposition \ref{prop: blow up criterion}.

We conclude that the assumption $T_*<\infty$ was false, i.e. $T_*=\infty$ and for any $T>0$, $u\coloneqq u_*|_{[0,T]}\in\MR(0,T)$ is the desired strong solution on $[0,T]$. Finally, note that the estimates leading to \eqref{eq: MR estimate global sol prep} can be repeated with $\theta$, $M$, $\phi$ of the coercivity condition belonging to $T$ instead of $T_*$,  proving \eqref{eq: MR estimate global sol}.
\end{proof}

\section{Proof of the large deviation principle}\label{sec: proof LDP}

\subsection{Weak convergence approach}\label{sec: weak convergence approach}

We return to our original setting of Section \ref{sec: Main result} and start with the proof of the LDP of Theorem \ref{th: main LDP theorem}. From now on, assume that $U$ is a real separable Hilbert space and $(\Om,\F,\P,(\F_t)_{t\geq 0})$ is a filtered probability space. For $\eps>0$, we let $Y^\eps$ be the unique strong solution to
  \begin{equation}\label{eq: SPDE Y^eps}
  \begin{cases}
    \dd Y^\eps(t)=-A(t,Y^\eps(t))+\sqrt{\eps}B(t,Y^\eps(t))\dd W(t), \quad t\in[0,T],\\
    Y^\eps(0)=x.
  \end{cases}
  \end{equation}
Here, $W$ is a \emph{$U$-cylindrical Brownian motion}, which is defined as follows.

\begin{definition}\label{def: cylindrical BM}
Let $W\in\mathcal{L}(L^2(\R_+;U),L^2(\Om))$. Then $W$ is called a \emph{$U$-cylindrical Brownian motion with respect to $(\Om,\F,\P,(\F_t)_{t\geq 0})$} if for all $f,g\in L^2(\R_+;U)$ and $t\in\R_+$:
  \begin{enumerate}[label=\emph{(\roman*)},ref=\textup{(\roman*)}]
    \item \label{it:bm1} $Wf$ is normally distributed with mean zero and $\E[Wf Wg]=\<f,g\?_{L^2(\R_+;U)}$,
    \item \label{it:bm2} if $\supp(f)\subset [0,t]$, then $Wf$ is $\F_t$-measurable,
    \item \label{it:bm3} if $\supp(f)\subset [t,\infty)$, then $Wf$ is independent of $\F_t$.
  \end{enumerate}
\end{definition}

 There exist several different definitions of a cylindrical Brownian motion or cylindrical Wiener process in the literature. Some references in our proof of the LDP use (an equivalent of) an \emph{$\R^\infty$-Brownian motion}, defined below.

\begin{definition}\label{def: sequence independent BM}
An \emph{$\R^\infty$-Brownian motion} (in $U$) is a pair $\tilde{W}\coloneqq ((\beta_k)_{k\in\N},(e_k)_{k\in\N})$, with $(\beta_k)_{k\in\N}$ a sequence of independent standard real-valued $(\F_t)$-Brownian motions and $(e_k)_{k\in\N}$ an orthonormal basis for $U$.
\end{definition}

In Proposition \ref{prop: equiv cyl BM} of Appendix \ref{appendix}, the connection between the $U$-cylindrical Brownian motion and the $\R^\infty$-Brownian motion is summarized, as well as their equivalent, but differently constructed stochastic integrals.
The $\R^\infty$-Brownian motion of Definition \ref{def: sequence independent BM} is e.g.\ used in \cite{liurockner15}, where it is called a cylindrical $Q$-Wiener process (with $Q\coloneqq I\in\mathcal{L}(U;U)$ the identity operator). Often, the notation $\tilde{W}(t)= \sum_{k\in\N}\beta_k(t)e_k$ is also used, which is only formal as the series does not converge in $L^2(\Om;U)$. However, we will write $\tilde{W}=((\beta_k)_{k\in\N},(e_k)_{k\in\N})$.

\begin{remark}
For the proof of the LDP for $(Y^\eps)$, without loss of generality, we can assume that the filtration $(\F_t)_{t\geq 0}$ is right-continuous and complete. Indeed, one can fix any orthonormal basis $(e_k)_{k\in \N}$ of $U$ and put  $\mathcal{H}^k_t\coloneqq \sigma(W(\one_{(0,s]}\otimes e_k):s\in[0,t])$ for $k\in\N$ and
\[
\F^0_t\coloneqq \sigma(\bigcup_{k\in\N}\mathcal{H}^k_t), \quad \mathcal{H}^0_t\coloneqq \sigma(\bigcup_{k\in\N}\mathcal{H}^k_t\cup \mathcal{N}), \quad \mathcal{H}_t\coloneqq \mathcal{H}^0_{t^+}\coloneqq \bigcap_{h>0}\mathcal{H}^0_{t+h},
\]
where $\mathcal{N}$ is the collection of all $(\Om,\F,\P)$-null sets. Then $(\mathcal{H}_t)_{t\geq 0}$ is a complete, right-continuous filtration on $(\Om,\bar{\F},\bar{\P})$. Moreover, one can show that $W$ is a $U$-cylindrical Brownian motion with respect to $(\Om,\bar{\F},\bar{\P}, (\mathcal{H}_t)_{t\geq 0})$ and with respect to $(\Om,{\F},{\P}, (\F^0_t)_{t\geq 0})$. Let $Y^\eps_0$ and $\bar{Y}^\eps$  be the unique strong solution to \eqref{eq: SPDE Y^eps} on $(\Om,{\F},{\P}, (\F^0_t)_{t\geq 0})$ and $(\Om,\bar{\F},\bar{\P},(\mathcal{H}_t)_{t\geq 0})$, respectively. Since $\F^0_t\subset \F_t \cap\mathcal{H}_t$, $Y^\eps_0$  is also a strong solution to \eqref{eq: SPDE Y^eps} on $(\Om,{\F},{\P},(\F_t)_{t\geq 0})$ and on $(\Om,\bar{\F},\bar{\P},(\mathcal{H}_t)_{t\geq 0})$. Pathwise uniqueness gives  ${Y}^\eps=Y_0^\eps=\bar{Y}^\eps$ $\P$-a.s. Now trivially from Definition \ref{def: LDP}, if we prove the LDP for $(\bar{Y}^\eps)$, then the LDP carries over to $(Y^\eps)$.
\end{remark}

In view of the above remark, we assume that the filtration \emph{$(\F_t)_{t\geq 0}$ is right-continuous and complete} from now on, and we assume that  $W$ is a $U$-cylindrical Brownian motion with respect to $(\F_t)_{t\geq 0}$.
Moreover, we fix any orthonormal basis $(e_k)_{k\in \N}$ for $U$.
We let $\tilde{W}=(\beta_k)_{k\in\N},(e_k)_{k\in\N})$ denote the  unique $\R^\infty$-Brownian motion associated to $W$ from Proposition \ref{prop: equiv cyl BM}, i.e.\ satisfying \eqref{eq: assump beta_k}.
In the upcoming proofs $\tilde{W}$ will be useful, since we will be applying the Yamada-Watanabe theorem and Girsanov's theorem for $\R^\infty$-Brownian motions.
Finally, from now on we fix a separable Hilbert space $U_1$ and a Hilbert-Schmidt \emph{inclusion}
$J\colon U\into U_1$. This is always possible: let $\<u,v\?_1\coloneqq \sum_{k=1}^\infty \frac{1}{k}\<u,e_k\?_U \<e_k,v\?_U$ for $u,v\in U$ and let $U_1\coloneqq \mathrm{completion}(U, \<\cdot,\cdot\?_1)$.
We associate to $\tilde{W}$ the following $U_1$-valued process:
\begin{equation}\label{eq: Q-Wiener}
\tilde{W}_1(t)\coloneqq \sum_{k=1}^{\infty}\beta_k(t)J e_k, \qquad t\in[0,T].
\end{equation}
By \cite[Prop.\ 2.5.2]{liurockner15}, $\tilde{W}_1$ is a $Q_1$-Wiener process on $U_1$, with $Q_1\coloneqq JJ^*$.
In what follows,
$\tilde{W}_1$ denotes this $Q_1$-Wiener process defined by \eqref{eq: Q-Wiener}. We note that the paths of $\tilde{W}_1$ are in $C([0,T];U_1)$.

\begin{definition}\label{def: S_K A_K}
We define
\[
\mathcal{A}\coloneqq \{\Psi\colon [0,T]\times \Om\to U : \Psi \text{ is an } (\F_t)\text{-predictable process}, \|\Psi\|_{L^2(0,T;U)}<\infty \; \P\text{-a.s.}\}
\]
and for $K>0$,
\[
S_K\coloneqq \{\psi\in L^2(0,T;U) : \|\psi\|_{L^2(0,T;U)}\leq K\},\qquad \mathcal{A}_K\coloneqq \{\Psi\in\mathcal{A}: \Psi\in S_K \; \P\text{-a.s.}\}.
\]
We write $(S_K,\mathrm{weak})$ for the topological space consisting of $S_K$, equipped with the weak topology inherited from $L^2(0,T;U)$.
\end{definition}

The next theorem gives sufficient conditions for the LDP and is known as the \emph{weak convergence approach}, which originates from \cite[Th.\ 4.4]{budhidupuis01}. In \cite{matoussi21}, a useful adaptation was proved. The following version is immediately derived from \cite[Th.\ 3.2]{matoussi21}. We will use it to prove Theorem \ref{th: main LDP theorem}.

\begin{theorem}\label{th: sufficient LDP criterion}
Let $\mathcal{E}$ be a Polish space and let $(Y^\eps)_{\eps>0}$ be a collection of $\mathcal{E}$-valued random variables on $(\Omega,\F,\P)$.
Let $\tilde{W}=((\beta_k)_{k\in\N},(e_k)_{k\in\N})$ be an $\R^\infty$-Brownian motion. Let $\tilde{W}_1\colon \Om\to C([0,T];U_1)$ be the associated $Q_1$-Wiener process on $U_1$ defined by \eqref{eq: Q-Wiener}.
Suppose that for $\eps\geq 0$, there exist measurable maps $\G^\eps\colon C([0,T];U_1)\to\mathcal{E}$ such that
\begin{enumerate}[label=\emph{(\roman*)},ref=\textup{(\roman*)}]
\item \label{it:suf1} $Y^\eps=\G^\eps(\tilde{W}_1(\cdot))$ a.s.\ for all $\eps>0$,
  \item \label{it:suf2} for any $K<\infty$,  $(\psi_n)\subset S_K$ and $\psi\in S_K$ with $\psi_n\to \psi$ weakly in $L^2(0,T;U)$, it holds that
  \[
  \G^0\left(\int_0^{\cdot} \psi_n(s)\dd s\right)\to\G^0\left(\int_0^\cdot \psi(s)\dd s\right) \text{ in } \mathcal{E},
  \]
  \item \label{it:suf3} for any $K<\infty$ and $(\Psi^\eps)\subset \mathcal{A}_K$, it holds that
  \[
  \G^\eps\left(\tilde{W}_1(\cdot)+\frac{1}{\sqrt{\eps}}\int_0^{\cdot} \Psi^\eps(s)\dd s\right)-\G^0\left(\int_0^\cdot \Psi^\eps(s)\dd s\right)\to 0 \text{ in probability}
  \]
  as $\mathcal{E}$-valued random variables.
\end{enumerate}
Then $(Y^\eps)_{\eps>0}$ satisfies the LDP on $\mathcal{E}$ with good rate function
\begin{align}\label{eq: def rate I budhi}
I(z)\coloneqq \frac{1}{2}\inf\Big\{\int_0^T\|\psi(s)\|_U^2\dd s : \psi\in L^2(0,T;U),\, z=\G^0(\int_0^{\cdot} \psi(s)\dd s)\Big\}.
\end{align}
\end{theorem}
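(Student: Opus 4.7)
Since $\mathcal{E}$ is Polish and $I$ will turn out to be a good rate function, the LDP is equivalent to the Laplace principle \cite[Th.~1.2.3]{dupuis97}. So it suffices to verify, for every bounded continuous $h\colon\mathcal{E}\to\R$,
\begin{equation*}
\lim_{\eps\downarrow0}-\eps\log\E\bigl[\e^{-h(Y^\eps)/\eps}\bigr]=\inf_{z\in\mathcal{E}}\{h(z)+I(z)\}.
\end{equation*}
The engine will be the Bou\'e--Dupuis--Budhiraja--Maroulas variational representation \cite[Th.~3.6]{budhidupuis01} applied to the bounded measurable functional $w\mapsto h(\G^\eps(w))/\eps$ on the Wiener space of $\tilde W_1$; rescaling the optimizing drift by $\sqrt\eps$ and invoking (i) yields
\begin{equation*}
-\eps\log\E\bigl[\e^{-h(Y^\eps)/\eps}\bigr]=\inf_{\Psi\in\mathcal{A}}\E\Bigl[\tfrac12\|\Psi\|_{L^2(0,T;U)}^2+h\bigl(\G^\eps\bigl(\tilde W_1+\tfrac1{\sqrt{\eps}}\!\int_0^\cdot\!\Psi\,ds\bigr)\bigr)\Bigr].
\end{equation*}

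For the upper bound $\limsup\le\inf_z\{h(z)+I(z)\}$, I insert the deterministic constant control $\Psi\equiv\psi$ for an arbitrary fixed $\psi\in L^2(0,T;U)$, which lies in $\mathcal{A}_{\|\psi\|_{L^2}}$; condition (iii), continuity and boundedness of $h$, and bounded convergence give
\begin{equation*}
\limsup_{\eps\downarrow0}\bigl(-\eps\log\E\e^{-h(Y^\eps)/\eps}\bigr)\le\tfrac12\|\psi\|_{L^2}^2+h\bigl(\G^0(\!\int_0^\cdot\!\psi\,ds)\bigr),
\end{equation*}
and infimizing over $\psi$ yields the claim by the definition of $I$. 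For the lower bound, I pick $\Psi^\eps\in\mathcal{A}$ nearly realizing the variational infimum; boundedness of $h$ forces $\E[\|\Psi^\eps\|_{L^2}^2]\le 4\|h\|_\infty+O(\eps)$, so an exit-time truncation at level $K$ of $\int_0^t\|\Psi^\eps\|_U^2\,ds$ replaces $\Psi^\eps$ by $\tilde\Psi^\eps\in\mathcal{A}_K$ with an error vanishing as $K\to\infty$, uniformly in $\eps$. Condition (iii) combined with bounded convergence now lets me swap $\G^\eps(\tilde W_1+\tfrac1{\sqrt{\eps}}\int_0^\cdot\tilde\Psi^\eps\,ds)$ for $\G^0(\int_0^\cdot\tilde\Psi^\eps\,ds)$ inside the expectation of $h$, and the crucial pathwise inequality
\begin{equation*}
\tfrac12\|\tilde\Psi^\eps(\omega)\|_{L^2}^2+h\bigl(\G^0(\!\int_0^\cdot\!\tilde\Psi^\eps(\omega)\,ds)\bigr)\ge I\bigl(\G^0(\!\int_0^\cdot\!\tilde\Psi^\eps(\omega)\,ds)\bigr)+h(\cdots)\ge\inf_z\{h+I\},
\end{equation*}
valid by the very definition of $I$, delivers the matching lower bound after taking expectations, $\liminf_{\eps\downarrow0}$, and then $K\to\infty$. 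Notably, no weakly convergent subsequence of $(\tilde\Psi^\eps)$ need be extracted: the pointwise variational inequality for $I$ does all the work.

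Finally, to prove $I$ has compact sublevel sets, let $I(z)\le M$ and pick $\psi_n\in L^2(0,T;U)$ with $\tfrac12\|\psi_n\|_{L^2}^2\to I(z)$ and $z=\G^0(\int_0^\cdot\psi_n\,ds)$. Weak compactness of $S_{\sqrt{2M}}$ and weak lower semicontinuity of $\|\cdot\|_{L^2}$ yield a subsequential weak limit $\psi^*\in S_{\sqrt{2M}}$, and condition (ii) identifies $z=\G^0(\int_0^\cdot\psi^*\,ds)$. Thus $\{I\le M\}$ equals the image of $S_{\sqrt{2M}}$ under the (weak-to-norm continuous) map of (ii), and is therefore compact in $\mathcal{E}$. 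The \textbf{main obstacle} is the exit-time truncation in the lower bound: condition (iii) is stated only for controls in $\mathcal{A}_K$, whereas near-optimal $\Psi^\eps$ are a priori bounded in $L^2$ only in expectation, so one must carefully track the Girsanov-type error introduced by the truncation, or else invoke the qualified variational formulation of \cite[Th.~3.2]{matoussi21}, where the infimum is taken over $\mathcal{A}_K$ from the outset.
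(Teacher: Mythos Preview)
The paper does not give an independent proof of this theorem: it is recorded as ``immediately derived from \cite[Th.~3.2]{matoussi21}'', with the subsequent discussion merely explaining how conditions \ref{it:suf2}--\ref{it:suf3} relate to the original Budhiraja--Dupuis conditions (II)--(III). Your proposal therefore goes well beyond the paper by sketching an actual proof via the Bou\'e--Dupuis--Budhiraja variational representation, which is essentially the argument underlying the cited references. The pointwise inequality
\[
\tfrac12\|\tilde\Psi^\eps(\omega)\|_{L^2}^2+h\bigl(\G^0(\textstyle\int_0^\cdot\tilde\Psi^\eps(\omega)\,ds)\bigr)\ge\inf_z\{h(z)+I(z)\}
\]
is a nice shortcut that bypasses the extraction of weakly convergent subsequences of the controls; this is the streamlining due to \cite{matoussi21}.

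There is, however, a genuine gap in your lower bound. You write that ``condition \ref{it:suf3} combined with bounded convergence now lets me swap $\G^\eps(\cdots)$ for $\G^0(\int_0^\cdot\tilde\Psi^\eps\,ds)$ inside the expectation of $h$''. But \ref{it:suf3} only gives $X_\eps-Y_\eps\to 0$ in probability for $X_\eps=\G^\eps(\cdots)$ and $Y_\eps=\G^0(\int\tilde\Psi^\eps)$, and for a merely bounded \emph{continuous} (not uniformly continuous) $h$ this does \emph{not} imply $\E[h(X_\eps)]-\E[h(Y_\eps)]\to 0$: take $\mathcal E=\R$, $h=\sin(\cdot^2)$ and $X_\eps,Y_\eps$ drifting to infinity with $X_\eps-Y_\eps\to 0$. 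What rescues the argument is precisely condition \ref{it:suf2}: since $\tilde\Psi^\eps\in S_K$ a.s.\ and the map $(S_K,\mathrm{weak})\to\mathcal E$ is continuous with weakly compact domain, the random variables $Y_\eps$ take values in a fixed compact set $C\subset\mathcal E$; a standard compactness argument then gives, for every $\eta>0$, a $\delta>0$ with $|h(x)-h(y)|<\eta$ whenever $y\in C$ and $d(x,y)<\delta$, so $h(X_\eps)-h(Y_\eps)\to 0$ in probability and bounded convergence applies. Thus \ref{it:suf2} is needed in the lower bound as well, not only for the goodness of $I$. Once you insert this, your proof is complete and matches the route of \cite{matoussi21}.
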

Conditions \ref{it:suf2} and \ref{it:suf3} imply the conditions of the original weak convergence approach of \cite{budhidupuis01}. For the latter, instead of  \ref{it:suf2} and \ref{it:suf3}, one would require
\begin{enumerate}[label={(\Roman*)},ref=\textup{(\Roman*)}, start=2]
    \item\label{it:bd1}  for any $K<\infty$, $\{\G^0(\int_0^\cdot \psi(s)\dd s):\psi\in S_K\}$ is a compact subset of $\mathcal{E}$,
  \item \label{it:bd2} for any $K<\infty$, if $(\Psi^\eps)\subset \mathcal{A}_K$ with $\Psi^\eps\to \Psi$ in distribution with respect to the weak topology on $L^2(0,T;U)$, then
  $\G^\eps\left(\tilde{W}_1(\cdot)+\frac{1}{\sqrt{\eps}}\int_0^{\cdot} \Psi^\eps(s)\dd s\right)\to  \G^0\left(\int_0^\cdot \Psi(s)\dd s\right) \text{ in distribution}$.
\end{enumerate}
Here, \cite[Th.\ 4.4]{budhidupuis01} is applied with $Q_1$-Wiener process  $\tilde{W}_1$,
$H\coloneqq U_1$, $H_0\coloneqq Q_1^{\frac{1}{2}}(U_1)$ and one uses that $Q_1^{\frac{1}{2}}(U_1)=J(U)=U$ as a subspace of $U_1$, see \cite[Prop.\ 2.5.2]{liurockner15} (with $Q\coloneqq I$, $U_0\coloneqq I^{\frac{1}{2}}(U)=U$).

Note that \ref{it:bd1} means that the sublevel sets of the rate function $I$ defined by \eqref{eq: def rate I budhi} are compact, as is also required in Definition \ref{def: LDP}. On the other hand, \ref{it:suf2} means that the map $\tau\colon (S_K,\mathrm{weak})\to \mathcal{E}\colon \psi\mapsto \mathcal{G}^0(\int_0^\cdot \psi\dd s)=u^{\psi}$
is continuous ($S_K$ is weakly metrizable as opposed to $L^2(0,T;U)$, thus sequential continuity suffices). In particular, this implies \ref{it:bd1}. Indeed,  $S_K\subset L^2(0,T;U)$ is weakly compact by the Banach-Alaoglu theorem and reflexivity of $L^2(0,T;U)$, so $\{\G^0(\int_0^\cdot \psi(s)\dd s):\psi\in S_K\}=\tau(S_K)$ is the continuous image of a compact set, hence it is compact.

We will apply Theorem \ref{th: sufficient LDP criterion} with the map $\mathcal{G}^0\colon C([0,T];U_1)\to\MR(0,T)$ given by
  \begin{equation}\label{eq: def G^0}
  \mathcal{G}^0(\gamma)\coloneqq \begin{cases}
                    u^\psi, \quad &\text{if } \gamma=\int_0^\cdot \psi(s)\dd s, \, \psi\in L^2(0,T;U),\\
                    0, &\text{otherwise},
                    \end{cases}
  \end{equation}
where $u^\psi$ is the strong solution to \eqref{eq: skeleton eq}. Note that the rate function $I$ defined by \eqref{eq: def rate I} is then precisely equal to the rate function given by \eqref{eq: def rate I budhi}.

We will verify that all conditions in Theorem \ref{th: sufficient LDP criterion} are satisfied for $Y^\eps$ defined as the strong solution to \eqref{eq: SPDE Y^eps}. Condition \ref{it:suf1} follows from the Yamada-Watanabe theorem in \cite{rock08}. The details are given in Lemma \ref{lem: yamada-watanabe}, as well as a preparation for the proof of condition \ref{it:suf3}.

\begin{lemma}\label{lem: yamada-watanabe}
Suppose that Assumption \ref{ass: crit var set} holds and suppose that $(A,B)$ satisfies \eqref{eq: coercivity condition (A,B)}. Let $x\in H$. Then for each $\eps>0$, there exists a measurable map $\mathcal{G}^\eps\colon C([0,T];U_1)\to\MR(0,T)$  such that the unique strong solution $Y^\eps$ to
  \eqref{eq: SPDE Y^eps} satisfies $Y^\eps=\mathcal{G}^{\eps}(\tilde{W}_1)$ a.s., where $\tilde{W}_1$ is given by \eqref{eq: Q-Wiener}.
  Moreover, for any $\Psi^\eps\in \mathcal{A}_K$,
  $X^\eps\coloneqq \mathcal{G}^{\eps}({\tilde{W}_1}(\cdot)+\frac{1}{\sqrt{\eps}}\int_0^{\cdot}{\Psi}^{\eps}(s)\dd s)$ is a strong solution to
  \begin{equation}\label{eq: SPDE X^eps}
  \begin{cases}
    \dd X^\eps(t)=-A(t,X^\eps(t))+B(t,X^\eps(t))\Psi^\eps(t)+\sqrt{\eps}B(t,X^\eps(t))\dd W(t), \quad t\in[0,T],\\
    X^\eps(0)=x.
  \end{cases}
  \end{equation}
\end{lemma}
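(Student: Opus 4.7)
The plan is to split the statement into two independent parts and handle them in order: the existence of the measurable selection map $\mathcal{G}^\eps$ via the Yamada--Watanabe theorem, and the identification of $X^\eps$ as a strong solution to \eqref{eq: SPDE X^eps} via a Girsanov change of measure.

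For the first part, I would invoke the Yamada--Watanabe theorem for variational SPDEs as formulated in \cite{rock08}. The two inputs required are both at hand: existence of a strong solution $Y^\eps \in \MR(0,T)$ follows from Theorem \ref{th: original stoch ev eq global well-posedness} (the pair $(A,\sqrt{\eps}B)$ inherits the coercivity \eqref{eq: coercivity condition (A,B)} for every $\eps\in(0,1]$, with the same $\theta,M$ and with $\phi$ replaced by a scaled version), and pathwise uniqueness in $\MR(0,T)$ is built into Definition \ref{def:strong sol}. Together they produce a Borel measurable map $\mathcal{G}^\eps\colon C([0,T];U_1)\to\MR(0,T)$ with $Y^\eps=\mathcal{G}^\eps(\tilde{W}_1)$ $\P$-a.s.

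For the second part, fix $\Psi^\eps\in\mathcal{A}_K$ and perform the Girsanov transform. Set
\[
L_t := \exp\Bigl(-\frac{1}{\sqrt{\eps}}\int_0^t \Psi^\eps(s)\dd W(s) - \frac{1}{2\eps}\int_0^t \|\Psi^\eps(s)\|_U^2\dd s\Bigr).
\]
Since $\|\Psi^\eps\|_{L^2(0,T;U)}\leq K$ $\P$-a.s., Novikov's condition is trivially met, $L$ is a uniformly integrable $(\F_t)$-martingale, and $\dd\tilde{\P}/\dd\P := L_T$ defines a probability measure equivalent to $\P$. By Girsanov's theorem for $U$-cylindrical Brownian motions, the process $\bar{W}(t):=W(t)+\frac{1}{\sqrt{\eps}}\int_0^t \Psi^\eps(s)\dd s$ is a $U$-cylindrical Brownian motion on $(\Om,\F,(\F_t),\tilde{\P})$, and the associated $Q_1$-Wiener process on $U_1$ (cf.\ \eqref{eq: Q-Wiener}) is exactly $\bar{W}_1(t)=\tilde{W}_1(t)+\frac{1}{\sqrt{\eps}}\int_0^t \Psi^\eps(s)\dd s$. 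Applying the Yamada--Watanabe selection map from the first part, but now on the filtered probability space $(\Om,\F,(\F_t),\tilde{\P})$ driven by $\bar{W}$, one obtains that $X^\eps:=\mathcal{G}^\eps(\bar{W}_1)$ is $\tilde{\P}$-a.s.\ the unique strong solution of $\dd Z=-A(t,Z)\dd t+\sqrt{\eps}B(t,Z)\dd\bar{W}$ with $Z(0)=x$. Rewriting this equation back under $\P$ via $\dd\bar{W}=\dd W+\frac{1}{\sqrt{\eps}}\Psi^\eps\dd t$ yields precisely \eqref{eq: SPDE X^eps}, and by equivalence of $\tilde{\P}$ and $\P$ the identification holds $\P$-a.s.\ as well.

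The main obstacle I expect is a bookkeeping one: verifying that the Yamada--Watanabe machinery of \cite{rock08}, which was set up for the classical variational framework, produces a map valued in the path space $\MR(0,T)$ relevant here (with its natural Borel $\sigma$-algebra), and that it accommodates the weaker notion of strong solution used in Definition \ref{def:strong sol} with a drift in $L^2(0,T;V^*)+L^1(0,T;H)$. Once this abstract translation is in place, which only uses existence and pathwise uniqueness in $\MR(0,T)$, the Girsanov step is a routine computation made painless by the uniform bound on $\Psi^\eps$.
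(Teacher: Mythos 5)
Your proposal is correct and follows essentially the same route as the paper's proof: Yamada--Watanabe from \cite{rock08} to produce $\mathcal{G}^\eps$, then Novikov/Girsanov and a second invocation of Yamada--Watanabe to identify $X^\eps$ as the solution driven by the shifted Brownian motion, with the identity transported back under $\P$ by equivalence of measures. The paper is simply more careful about the bookkeeping point you flag at the end, performing Girsanov for the $\R^\infty$-Brownian motion $\tilde W$ and then converting between the $\R^\infty$- and $U$-cylindrical stochastic integrals via Proposition~\ref{prop: equiv cyl BM} and a density argument on elementary integrands, rather than treating the operator $W\in\mathcal{L}(L^2(\R_+;U),L^2(\Om))$ loosely as a $U$-valued process.
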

\begin{proof}
To prove the first statement, we use the Yamada-Watanabe theorem from \cite[Th.\ 2.1]{rock08}  on $[0,T]$
with $L^1(0,T;V)$ replaced by $L^2(0,T;V)$.
Let $\eps>0$.  For any $Y^\eps$ with $Y^\eps\in\MR(0,T)$ a.s.\ and for any $\xi\in L^0((\Om,\F_0);H)$, we have that $(Y^\eps, \tilde{W})$ is a weak solution in the sense of \cite[Def. 1.4]{rock08} to
\begin{equation*}
  \begin{cases}
    \dd \tilde{Y}^\eps(t)=-A(t,\tilde{Y}^\eps(t))+\sqrt{\eps}B(t,\tilde{Y}^\eps(t))\dd \tilde{W}(t),\\
    \tilde{Y}^\eps(0)=\xi,
  \end{cases}
\end{equation*}
if and only if $Y^\eps$ is a strong solution in the sense of \cite[Def. 3.2]{AV22variational} to \eqref{eq: SPDE Y^eps} with $x$ replaced by $\xi$. This is a mere consequence of \eqref{eq: identity stoch integrals defs}
and the fact that $B(\cdot,Y(\cdot))\in  L^2(([0,T]\times\Om,\PP,\lambda\times \P);\UH)\subset\mathcal{N}(0,T)$ for any $Y\in \MR(0,T)$. By \cite[Th.\ 3.5]{AV22variational}, \eqref{eq: SPDE Y^eps} has a unique strong solution $Y^\eps$, also when $x$ is replaced by random initial data $\xi$. Thus we have pathwise uniqueness in the sense of \cite[Def. 1.7]{rock08} and we have existence of a.s.\ $\MR(0,T)$-valued weak solutions.
Now fix $x\in H$ and  $\eps>0$ and let $Y^\eps$ be the unique strong solution to \eqref{eq: SPDE Y^eps}. By \cite[Th.\ 2.1, Def. 1.9(2), Def. 1.8]{rock08}
there exists a measurable map $\mathcal{G}^\eps\colon C([0,T];U_1)\to \MR(0,T)$ such that a.s.  $Y^\eps=\mathcal{G}^\eps(\tilde{W}_1(\cdot))$.

Next, let $X^\eps\coloneqq \mathcal{G}^{\eps}({\tilde{W}_1}(\cdot)+\frac{1}{\sqrt{\eps}}\int_0^{\cdot}{\Psi}^{\eps}(s)\dd s)$.
We prove that $X^\eps$ solves \eqref{eq: SPDE X^eps}. Define
\[
\hat{W}\coloneqq \tilde{W}+\frac{1}{\sqrt{\eps}}\int_0^{\cdot} \Psi^{\eps}(s)\dd s\coloneqq ((\hat{\beta}_k)_{k\in\N},(e_k)_{k\in\N}), \qquad \hat{\beta}_k\coloneqq \beta_k+\frac{1}{\sqrt{\eps}}\int_0^{\cdot} \<\Psi^{\eps}(s),e_k\?_U\dd s.
\]
We have
$\E[\exp(\frac{1}{2}\|-\frac{1}{\sqrt{\eps}}\Psi^\eps\|_{L^2(0,T;U)}^2)]\leq
\exp(\frac{K^2}{2\eps})<\infty$, so by Novikov's condition \cite[Prop.\ 5.12]{karatzas98},
\begin{equation*}
  \E\left[\exp\left(\int_0^T\<-\frac{1}{\sqrt{\eps}}\Psi^\eps(s),\dd \tilde{W}(s)\?_{U}-\frac{1}{2}\|\frac{1}{\sqrt{\eps}}\Psi^\eps\|_{L^2(0,T;U)}^2\right)\right]=1.
\end{equation*}
Now Girsanov's theorem \cite[Proposition I.0.6]{liurockner15}, \cite[Th.\ 2.3]{ferrario13} yields that $\hat{W}$ is an $\R^\infty$-Brownian motion on $(\Om,\F,\hat{\P},(\F_t)_{t\geq 0})$, where
\[
\hat{\P}\coloneqq  \exp\left(-\frac{1}{\sqrt{\eps}}\int_0^T\<\Psi^\eps(s),\dd \tilde{W}(s)\?_{U}-\frac{1}{2\eps}\|\Psi^\eps\|_{L^2(0,T;U)}^2\right) \dd \P.
\]
Moreover, $\hat{W}$ induces a $U_1$-valued $Q_1$-Wiener process $\hat{W}_1$ on $(\Om,\F,\hat{\P},(\F_t)_{t\geq 0})$ using the same Hilbert-Schmidt inclusion $J\colon U\into U_1$ as we used for $\tilde{W}_1$ in \eqref{eq: Q-Wiener}, resulting in:
\begin{align*}
\hat{W}_1(t)\coloneqq \sum_{k\in\N}\hat{\beta}_k(t)Je_k&=\sum_{k\in\N}{\beta}_k(t)Je_k+\frac{1}{\sqrt{\eps}}\sum_{k\in\N}\Big(\int_0^{t} \<\Psi^{\eps}(s),e_k\?_U\dd s\Big)Je_k\\
&=\tilde{W}_1(t)+\frac{1}{\sqrt{\eps}}\int_0^{t}\sum_{k\in\N} \<\Psi^{\eps}(s),e_k\?_Ue_k\dd s\\
&=\tilde{W}_1(t)+\frac{1}{\sqrt{\eps}}\int_0^{t} \Psi^{\eps}(s)\dd s
\end{align*}
$\P$-a.s.\ in $U_1$, where we used that $\Psi^{\eps}\in\mathcal{A}_K$ to apply Fubini's theorem in the second line. Thus, recalling the definition of ${X}^\eps$ and noting that  $\hat{\P}\ll \P\ll \hat{\P}$, we have $\hat{\P}$-a.s.\ $X^\eps =\mathcal{G}^{\eps}(\hat{W}_1(\cdot))$.
By the Yamada-Watanabe theorem \cite[Th.\ 2.1, Def. 1.9]{rock08}, for ${X}^\eps=\mathcal{G}^{\eps}(\hat{W}_1(\cdot))$ we have that $({X}^\eps, \hat{W})$ is a weak solution to \eqref{eq: SPDE Y^eps}. That is, ${X}^\eps$ satisfies $\hat{\P}$-a.s.\ in $V^*$:
\begin{align}\label{eq: X eps}
    {X}^\eps(t)&=x+\int_0^t -A(s,{X}^\eps(s))\dd s+\int_0^t\sqrt{\eps}B(s,{X}^\eps(s))\dd \hat{W}(s).
\end{align}
By Proposition \ref{prop: equiv cyl BM}, there exists a unique $U$-cylindrical Brownian motion $\hat{\mathcal{W}}\in \mathcal{L}(L^2(\R_+;U);L^2(\Om))$ with respect to $(\Om,\F,\hat{\P},(\F_t)_{t\geq 0})$, satisfying for all $u\in U$ and $t\in[0,T]$:
\begin{align}
  \hat{\mathcal{W}}(\one_{(0,t]}\otimes u)=\sum_{k=1}^\infty \hat{\beta}_k(t)\<u,e_k\?_U
  &= W(\one_{(0,t]}\otimes u)+\frac{1}{\sqrt{\eps}}\int_0^t \<\Psi^\eps(s),u\?_U\dd s, \label{eq: hat W identical on elementary}
  \end{align}
where the last equality follows from the definition of $\hat{\beta}_k$ and  \eqref{eq: assump beta_k}.
Let $\hat{\mathcal{N}}(0,T)$ denote the stochastically integrable processes with respect to $\hat{\mathcal{W}}$ and $\hat{W}$ on $(\Om,\F,\hat{\P},(\F_t)_{t\geq 0})$, i.e. \eqref{eq: def class integrable processes} with $\P$ replaced by $\hat{\P}$. Note that $\hat{\mathcal{N}}(0,T)=\mathcal{N}(0,T)$, since $\P\ll\hat{\P}\ll\P$.
Thus, Proposition \ref{prop: equiv cyl BM} gives $\int_0^t \Phi(s)\dd \hat{\mathcal{W}}(s)=\int_0^t \Phi(s)\dd \hat{W}(s)$
$\hat{\P}$-a.s.\ for all $\Phi\in \mathcal{N}(0,T)$
and $t\in[0,T]$.  Therefore, combined with \eqref{eq: X eps},  ${X}^\eps$ satisfies $\hat{\P}$-a.s.\ (hence $\P$-a.s.) in $V^*$:
\begin{align*}
    {X}^\eps(t)&=x+\int_0^t -A(s,{X}^\eps(s))\dd s+\int_0^t\sqrt{\eps}B(s,{X}^\eps(s))\dd \hat{\mathcal{W}}(s)\\
    &=x+\int_0^t -A(s,{X}^\eps(s))\dd s+\int_0^t\sqrt{\eps}B(s,{X}^\eps(s))\dd W(s)+\int_0^tB(s,{X}^\eps(s))\Psi^{\eps}(s)\dd s.
\end{align*}
In the last line we used that $\int_0^t\Phi(s)\dd \hat{\mathcal{W}}(s)  =\int_0^t\Phi(s)\dd {W}(s) +\frac{1}{\sqrt{\eps}}\int_0^t\Phi(s)\Psi^\eps(s)\dd s$ for $\Phi\in \mathcal{N}(0,T)$ and $t\in[0,T]$.
For $\Phi= \one_{A\times (t_1,t_2]}\otimes(u\otimes x)$ with $0\leq t_1<t_2\leq T$, $A\in\F_{t_1}$, $u\in U$, $x\in H$,
the identity follows from \eqref{eq: hat W identical on elementary} and the definition of the stochastic integral for elementary processes \cite[p.\ 305]{NVW15}.  By linearity and continuity of the integrals and by a density argument  and localization, the identity extends for $\Phi\in \mathcal{N}(0,T)$.
This finishes the proof of the last claim of the lemma.
\end{proof}
\begin{remark}
The above proof also yields existence and uniqueness of strong solutions to \eqref{eq: SPDE X^eps}, since it was actually shown that $X^\eps$ is a strong solution to \eqref{eq: SPDE X^eps} if and only if it is a strong solution to \eqref{eq: SPDE Y^eps} with $W$ replaced by the $U$-cylindrical Brownian motion $\hat{\mathcal{W}}$. The latter was already considered in Theorem \ref{th: original stoch ev eq global well-posedness}.
\end{remark}

\subsection{Weakly continuous dependence in the skeleton equation}\label{ss:weakskeleton}

In this subsection we prove that condition \ref{it:suf2} of Theorem \ref{th: sufficient LDP criterion} is satisfied.
This will be achieved in the upcoming Proposition \ref{prop:cpt sublevel sets}.
Its proof
was inspired by \cite[Th.\ 3.2]{hongliliu21}.
Using an additional approximation by Bochner-simple functions, we can omit the time(-H\"older) regularity assumptions on $B$ of  \cite[(H5)]{hongliliu21}.

\begin{lemma}\label{lem: I_3 to zero}
Let $(w_n)\subset C([0,T];H)$, $(\alpha_n) \subset L^1(0,T;V^*)$ and $(\psi_n)\subset L^2(0,T;U)$ be such that
\[
w_n(t) = \int_0^t \alpha_n(s) \dd s
\] and such that $C_\alpha\coloneqq\sup_{n\in\N}\|\alpha_n\|_{L^1(0,T;V^*)}<\infty$,  $C_w\coloneqq\sup_{n\in\N}\|w_n\|_{C([0,T];H)}<\infty$ and $\psi_n\to \psi$ weakly in $L^2(0,T;U)$. Let ${b} \in L^2(0,T;\UH)$.
Then,
\begin{equation}\label{eq: weak conv lemma}
\lim_{n\to\infty}\sup_{t\in[0,T]}\Big|\int_0^t \<{b}(s)(\psi_n(s)-\psi(s)),w_n(s)\?\dd s\Big|=0.
\end{equation}
\end{lemma}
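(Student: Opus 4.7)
My approach is to approximate $b$ by simple functions whose values are finite-rank operators with range in $V$, so that after pairing with $w_n$ we obtain scalar functions lying in a compact subset of $L^2(0,T)$. First, note that weak convergence gives $K_\psi \coloneqq \sup_n \|\psi_n\|_{L^2(0,T;U)} < \infty$. Since $V$ is dense in $H$, finite-rank operators of the form $h=\sum_{k=1}^m (\cdot,f_k)_U v_k$ with $f_k\in U$ and $v_k\in V$ are dense in $\mathcal{L}_2(U,H)$, and therefore Bochner-simple functions valued in such operators are dense in $L^2(0,T;\mathcal{L}_2(U,H))$. Given $\delta>0$, I would fix such an approximation $b_\delta=\sum_{i=1}^N \one_{I_i} h_i$ satisfying $\|b-b_\delta\|_{L^2(0,T;\mathcal{L}_2(U,H))}<\delta$, and Cauchy--Schwarz together with the uniform bounds on $w_n$ and $\psi_n$ yields
\[
\sup_{t\in[0,T]} \Bigl| \int_0^t \bigl((b(s)-b_\delta(s))(\psi_n(s)-\psi(s)),w_n(s)\bigr)_H \dd s \Bigr| \leq \delta(K_\psi+\|\psi\|_{L^2(0,T;U)})C_w,
\]
so the error from replacing $b$ by $b_\delta$ is $O(\delta)$ uniformly in $n$.

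For the main term, by linearity it suffices to show the claim for a single building block $b(s)=\one_I(s)h$ with $h=(\cdot,f)_U v$, $f\in U$ and $v\in V$, i.e.\ that $\sup_t |\int_0^t \one_I(s)\xi_n(s)\eta_n(s)\dd s|\to 0$, where $\xi_n(s)\coloneqq (\psi_n(s)-\psi(s),f)_U$ and $\eta_n(s)\coloneqq (v,w_n(s))_H=\<w_n(s),v\?$ (the last equality uses $v\in V$). The weak convergence $\psi_n\rightharpoonup\psi$ in $L^2(0,T;U)$ gives $\xi_n\rightharpoonup 0$ in $L^2(0,T)$. The crucial point is that, because $v\in V$, I can use the $V^*$-$V$ duality to rewrite $\eta_n(s)=\int_0^s \<\alpha_n(r),v\?\dd r$, whence $\|\eta_n\|_{L^\infty(0,T)}\leq C_w\|v\|_H$ and $\|\eta_n'\|_{L^1(0,T)}\leq C_\alpha\|v\|_V$; thus $(\eta_n)$ is bounded in $W^{1,1}(0,T)$. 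This is the step that would fail if $h$ only took values in $H$.

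The hard part is to pass from weak convergence of $\xi_n$ and only $L^\infty$-boundedness of $\eta_n$ to strong convergence of the product integral uniformly in $t$, and this is precisely what the $V$-valued approximation unlocks. The one-dimensional compact embedding of $W^{1,1}(0,T)$ into $L^2(0,T)$ (via Helly's selection theorem, or Fr\'echet--Kolmogorov using the uniform translation bound $\|\eta_n(\cdot+h)-\eta_n\|_{L^1}\leq h\|\eta_n'\|_{L^1}$ together with the $L^\infty$-bound) ensures that any subsequence of $(\eta_n)$ has a further subsequence converging strongly in $L^2(0,T)$ to some $\eta$. Along such a sub-subsequence I would split
\[
\int_0^t \xi_n\eta_n\dd s = \int_0^t \xi_n(\eta_n-\eta)\dd s + \bigl(\xi_n,\one_{[0,t]}\eta\bigr)_{L^2(0,T)};
\]
the first term is bounded by $\|\xi_n\|_{L^2}\|\eta_n-\eta\|_{L^2}$, which tends to $0$ uniformly in $t$, while the second tends to $0$ uniformly in $t$ because weak convergence is uniform on the compact set $\{\one_{[0,t]}\eta:t\in[0,T]\}\subset L^2(0,T)$ (the continuous image of $[0,T]$). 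The subsequence principle then upgrades this to convergence of the full sequence. Summing over the finitely many building blocks of $b_\delta$ and combining with the $O(\delta)$ error gives $\limsup_n \sup_t |\int_0^t (b(\psi_n-\psi),w_n)_H\dd s|\leq C\delta$, and letting $\delta\downarrow 0$ concludes.
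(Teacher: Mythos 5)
Your proof is correct, and the two opening reductions mirror the paper's: you approximate $b$ by Bochner-simple functions whose values are finite-rank operators with range in $V$ (exactly the paper's class $\mathcal{S}$ after expanding the simple function), you use the uniform operator bound $\|I_n\| \leq C_\psi C_w$ to make this approximation harmless, and you then deal with a single scalar pairing $\xi_n\eta_n$. The divergence is in how the scalar step is closed, and it is a genuine difference. The paper runs an explicit equidistant time discretization $s\mapsto s_\delta$ of $w_n$, splits into three terms, and proves smallness by hand: the discretization error is controlled through the scalar chain rule $|w_n(t)-w_n(t_\delta)|^2=2\int_{t_\delta}^t \alpha_n(w_n-w_n(t_\delta))\dd s$ and the uniform $L^1$-bound on $\alpha_n$, the discretized term is a finite sum of weak-convergence pairings against indicators, and the boundary term is small via Cauchy--Schwarz. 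You instead observe that $(\eta_n)$ is bounded in $W^{1,1}(0,T)$ (which compactly embeds into $L^2(0,T)$ in one dimension, by Rellich--Kondrachov or, as you note, via Helly or Fr\'echet--Kolmogorov with the translation bound interpolated against the $L^\infty$-bound), extract a strong $L^2$-limit $\eta$ along subsequences, split $\int_0^t\xi_n\eta_n$ into a strong-times-weak remainder and a pairing against the compact set $\{\one_{[0,t]}\eta\}$, and then invoke the subsequence principle since the limit $0$ is subsequence-independent. The two approaches exploit the same underlying $W^{1,1}$-type bound on $\eta_n$; the paper's is entirely elementary and self-contained (no compactness theorem needed), while yours is softer and perhaps more conceptually transparent about what the $L^1$-bound on $\alpha_n$ is for. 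Both are valid, and your pointer to the uniform estimate $\|\tau_h\eta_n-\eta_n\|_{L^2}^2\lesssim h$ via the $L^\infty$-$L^1$ interpolation is precisely what the paper's chain-rule computation delivers in discretized form.
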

\begin{proof}
Without loss of generality, we can assume $\psi=0$, i.e. $\psi_n\to 0$ weakly in $L^2(0,T;U)$ (apply to $\psi_n-\psi$).
Since $(\psi_n)$ is weakly convergent, it is bounded. Throughout the proof we let
\[
C_\psi\coloneqq  \sup_{n\in\N}\|\psi_n\|_{L^2(0,T;U)}<\infty.
\]
First, let us observe that it suffices to prove \eqref{eq: weak conv lemma} for all $b$ in the collection \[
\mathcal{S}\coloneqq\{\one_D\otimes u\otimes v: D\in \BB([0,T]), u\in U, v\in V\}\subset L^2(0,T;\UH),
\]
where $\big(\one_D\otimes u\otimes v\big)(t)x\coloneqq\one_D(t)(u,x)_Uv\in H$ for $t\in [0,T]$ and  $x\in U$. Note that $\mathrm{span}(\mathcal{S})$ is dense in $L^2(0,T;\UH)$, using consecutively density of Bochner-simple functions, density of finite rank operators in $\mathcal{L}_2(U,H)$ and density of $V$ in $H$.
Define for $n\in\N$:
\[
I_n\colon L^2(0,T;\UH)\to C([0,T];\R), \; I_n(b) \coloneqq \int_0^\cdot \<{b}(s)\psi_n(s),w_n(s)\?\dd s.
\]
Each $I_n$ is linear and continuous with $\|I_n\|\leq C_\psi C_w$, independent of $n$:
\begin{align}\label{eq: I_n unif bnd}
\|I_n(b)\|_{C([0,T];\R)}&\leq  \|b\|_{L^2(0,T;\UH)}\|\psi_n\|_{L^2(0,T;U)}\|w_n\|_{C(0,T;H)}\leq \|b\|_{L^2(0,T;\UH)}C_\psi C_w.
\end{align}
If \eqref{eq: weak conv lemma} holds for all $b\in \mathcal{S}$, i.e.\ $\lim_{n\to\infty}\|I_n(b)\|_{C([0,T];\R)}=0$, then it also holds for all $b\in \mathrm{span}(\mathcal{S})$, by the triangle inequality in $C([0,T];\R)$. Moreover, for $b\in L^2(0,T;\UH)$, we find $(b_k)\subset \mathrm{span}(\mathcal{S})$ with $b_k \to b$ in $L^2(0,T;\UH)$ by density. Now \eqref{eq: I_n unif bnd} and a standard $2\eps$-argument
yield \eqref{eq: weak conv lemma} for $b$.

It remains to prove \eqref{eq: weak conv lemma} for $b=\one_D\otimes u\otimes v$ with $D\in \BB([0,T])$, $u\in U$ and $v\in V$. Note that in this case,
\begin{equation}\label{eq: reduction to R}
I_n(b)=\int_0^\cdot \one_D(s)\big(u,\psi_n(s)\big)_U\<v,w_n(s)\?\dd s
\end{equation}
and we have
\[
(v, w_n(s))_H=\<v,w_n(s)\?=\int_0^t \<\alpha_n(s),v\?\dd s.
\]
Since $u$ and $v$ are fixed, we have $(v, w_n(\cdot))_H\in C([0,T];\R)$ and $\<\alpha_n(\cdot),v\?\in L^1(0,T)$ with norms uniformly bounded in $n$. Moreover, $(u,\psi_n(\cdot))_U\in L^2(0,T)$ and $\psi_n\to 0$ weakly in $L^2(0,T;U)$ implies $(u,\psi_n(\cdot))_U\to 0$ weakly in $L^2(0,T)$.
Combined with \eqref{eq: reduction to R}, we conclude that it suffices to prove the lemma for $U=V=H=V^*=\R$ and $b=\one_D\in L^2(0,T)=L^2(0,T;\mathcal{L}_2(\R;\R))$.

Let $(w_n)$, $(\alpha_n)$, $(\psi_n)$  be as in the statement, now real-valued, and with $\psi=0$. Define $I^n(t)\coloneqq \int_0^t \one_D(s)\psi_n(s)w_n(s)\dd s$. We have to show that $\lim_{n\to\infty}\sup_{t\in[0,T]}|I^n(t)|=0$.

We use  an equidistant time discretization to approximate $w_n$.
For $\delta>0$ and $t\in[0,T]$, put $t_\delta\coloneqq \lfloor \frac{t}{\delta}\rfloor\delta$.
We have for all $n\in\N$ and $\delta>0$:
\begin{align}\label{eq: est I_3^n}
 |I^n(t)| &\leq \Big|\int_0^t \one_D(s)\psi_n(s)(w_n(s)-w_n(s_\delta))\dd s\Big|+\Big|\int_0^t \one_D(s)\psi_n(s)(w_n(s_\delta))\dd s\Big|\notag\\
&\leq \Big|\int_0^t \one_D(s)\psi_n(s)(w_n(s)-w_n(s_\delta))\dd s\Big|\notag\\
  &\quad+\sum_{l=0}^{\lfloor \frac{t}{\delta}\rfloor-1}\Big|\int_{l\delta}^{(l+1)\delta} \one_D(s)\psi_n(s)w_n(s_\delta)\dd s\Big|+\Big|\int_{t_\delta}^t \one_D(s)\psi_n(s)w_n(s_\delta)\dd s\Big|\notag\\
  &\eqqcolon  J_1^{n,\delta}(t)+\sum_{l=0}^{\lfloor \frac{T}{\delta}\rfloor-1}J_2^{n,\delta,l}+J_3^{n,\delta}(t).
\end{align}

We estimate each term.
Since $\psi_n\to 0$ weakly in $L^2(0,T)$, we have for all $\delta>0$ and $l\in\N$:
\begin{align}\label{eq: est J_3}
 J_2^{n,\delta,l}&=|w_n(l\delta)|\Big|\int_{l\delta}^{(l+1)\delta} \one_{D}(s)\psi_n(s)\dd s\Big|\leq C_w\Big|\int_{l\delta}^{(l+1)\delta} \one_{D}(s)\psi_n(s)\dd s\Big|\to 0 \quad\text{as } n\to\infty.
\end{align}

Furthermore, we have for all $n\in\N$:
\begin{align}\label{eq: est J_4}
  \sup_{t\in[0,T]}J_3^{n,\delta}(t)
  &\leq C_w \sup_{t\in[0,T]}\int_{t_\delta}^t|\psi_n(s)|\dd s
  \leq C_w C_\psi  \delta^{\frac{1}{2}} \to 0 \quad\text{as } \delta\downarrow 0,
\end{align}
where we used that $|t-t_\delta|<\delta$ for all $t\in[0,T]$. Note that the convergence is uniform in $n$.

Finally, we estimate $J_1^{n,\delta}(t)$ uniformly in $n$ and $t$. By the Cauchy--Schwarz inequality, we have for all $n \in\N$ and $\delta>0$:
\begin{align}\label{eq: est J_1}
  \sup_{t\in[0,T]}J_1^{n,\delta}(t)
  \leq \int_0^T |\psi_n(s)|\,|w_n(s)-w_n(s_\delta)|\dd s
  \leq C_\psi\|w_n(\cdot)-w_n(\cdot_\delta)\|_{L^2(0,T)}
\end{align}
To estimate further, we use an argument inspired by \cite[Lem.\ 3.3]{hongliliu21}. Note that $w_n(0)=0$ and
\begin{align}\label{eq: est w_n difference H-norm}
 \int_0^T |w_n(t)-w_n(t_\delta)|^2 \dd t&= \int_0^\delta |w_n(t)|^2 \dd t+\int_\delta^T |w_n(t)-w_n(t_\delta)|^2 \dd t \notag\\
 &\leq \delta C_w^2+\int_\delta^T |w_n(t)-w_n(t_\delta)|^2 \dd t.
\end{align}
For any $t\in[\delta,T]$, we can apply the chain rule \eqref{eq: Ito pardoux deterministic}
to $v_n^{t,\delta}(\cdot)\coloneqq w_n(\cdot)-w_n(t_\delta)=\int_{t_\delta}^\cdot \alpha_n(s)\dd s$ on $[t_\delta,T]$ and obtain for all $\tilde{t}\in[t_\delta,T]$:
\[
|w_n(\tilde{t})-w_n(t_\delta)|^2=2\int_{t_\delta}^{\tilde{t}}\alpha_n(s)(w_n(s)-w_n(t_\delta))\dd s.
\]
Applying the above expression with $\tilde{t}=t$ we estimate the second term from \eqref{eq: est w_n difference H-norm}:
\begin{align}\label{eq: est w_n difference H-norm term3}
 \int_\delta^T |w_n(t)-w_n(t_\delta)|^2  \dd t &=2\int_\delta^T \int_{t_\delta}^{t}\alpha_n(s)(w_n(s)-w_n(t_\delta))\dd s\dd t\notag\\
  &\leq 4C_w \int_\delta^T \int_{t-\delta}^{t}|\alpha_n(s)|\dd s\dd t\notag\\
  &\leq 4C_w\int_0^{T}\int_\delta^T  \one_{[s,(s+\delta)\wedge T]}(t)\dd t \, |\alpha_n(s)|\dd s\notag\\
  &\leq 4\delta C_wC_\alpha,
\end{align}
where we used that $\one_{[t-\delta,t]}(s)\leq \one_{[s,(s+\delta)\wedge T]}(t)$ for all $(s,t)\in[0,T]\times[\delta,T]$.
Combining \eqref{eq: est J_1}, \eqref{eq: est w_n difference H-norm} and \eqref{eq: est w_n difference H-norm term3} we conclude that for all $\delta>0$:
\begin{align}\label{eq: est J_1 final}
  \sup_{n\in\N}\sup_{t\in[0,T]}J_1^{n,\delta}(t)&\leq C_\psi\big(\delta C_w^2+4\delta C_w C_\alpha\big)^{\frac{1}{2}}.
\end{align}

Now let $\eps>0$.
According to \eqref{eq: est J_1 final} and \eqref{eq: est J_4}, fix $\delta>0$ sufficiently small such that we have
$\sup_{n\in\N}\sup_{t\in[0,T]}J_1^{n,\delta}(t)<\frac{\eps}{3}$ and $\sup_{n\in\N}\sup_{t\in[0,T]}J_3^{n,\delta}(t)<\frac{\eps}{3}.
$
Then, according to \eqref{eq: est J_3}, pick $N\in\N$ such that for all $n\geq N$: $J_2^{n,\delta,l}<\frac{\eps}{3\lfloor \frac{T}{\delta}\rfloor}$.
By \eqref{eq: est I_3^n}, we obtain $\sup_{t\in[0,T]}|I^n(t)|<\eps$ for all $n\geq N$. Thus  $\lim_{n\to\infty}\sup_{t\in[0,T]}|I^n(t)|=0$.
\end{proof}

Equipped with the lemma above, we now prove that condition \ref{it:suf2} of Theorem \ref{th: sufficient LDP criterion} is satisfied.
Note that the growth bounds on $B$ in Assumption \ref{ass: crit var set} contain $V$-norms (instead of merely $H$-norms), making it more difficult to apply Gronwall inequalities. To deal with this, the estimates from Lemma \ref{lem: embeddings rho_j} will be used.

\begin{proposition}\label{prop:cpt sublevel sets}
Suppose that Assumption \ref{ass: crit var set} holds and suppose that $(A,B)$ satisfies \eqref{eq: coercivity condition (A,B)}.
For $\psi\in L^2(0,T;U)$ let $u^\psi$ be the unique strong solution to \eqref{eq: skeleton eq}.
Then for any $K\geq 0$, the map $(S_K,\mathrm{weak})\to \MR(0,T)\colon  \psi\mapsto u^{\psi}$
is continuous.
\end{proposition}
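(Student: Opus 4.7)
The plan is to set $u_n \coloneqq u^{\psi_n}$, $u \coloneqq u^\psi$, and $w_n \coloneqq u_n - u \in \MR(0,T)$ with $w_n(0)=0$, and show $\|w_n\|_{\MR(0,T)} \to 0$ via an energy/Gronwall argument in which the only genuinely infinite-dimensional step uses weak convergence via Lemma \ref{lem: I_3 to zero}. First, since $\psi_n \in S_K$, the estimate \eqref{eq: MR estimate global sol} gives a uniform bound $\|u_n\|_{\MR(0,T)} \leq C(K,x,T)$ (and the same for $u$), so with $N \coloneqq \sup_n \|u_n\|_{C([0,T];H)} \vee \|u\|_{C([0,T];H)} < \infty$ all the constants $\theta_{N,T}, M_{N,T}, C_{N,T}$ from Assumption \ref{ass: crit var set} become $n$-independent.

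Next, I would apply the deterministic chain rule \eqref{eq: Ito pardoux deterministic} to $t\mapsto \|w_n(t)\|_H^2$ and split the difference of the drift as
\[
A(s,u_n)-A(s,u)=A_0(s,u_n)w_n+(A_0(s,u_n)-A_0(s,u))u-(F(s,u_n)-F(s,u)),
\]
and the diffusion term as $B(s,u_n)\psi_n-B(s,u)\psi=[B(s,u_n)-B(s,u)]\psi_n+B(s,u)(\psi_n-\psi)$. Assumption \ref{it:ass2} combined with Young's inequality $2|(B_0(u_n)w_n\psi_n, w_n)_H| \leq \nn B_0(u_n)w_n\nn_H^2 + \|\psi_n\|_U^2 \|w_n\|_H^2$ provides the leading coercive contribution $-2\theta_{N,T}\|w_n\|_V^2 + (2M_{N,T}+\|\psi_n\|_U^2)\|w_n\|_H^2$. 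The Lipschitz cross terms involving $(A_0(u_n)-A_0(u))u$, $F(u_n)-F(u)$, $(B_0(u_n)-B_0(u))u\,\psi_n$ and $(G(u_n)-G(u))\psi_n$ are all controlled by Lemma \ref{lem: embeddings rho_j}\ref{it:emb3}, \ref{it:emb5} and pointwise Young's inequalities; this produces factors $\|u\|_V^2$, $\|u\|_V\|\psi_n\|_U$, $(1+\|u_n\|_V^2+\|u\|_V^2)$ and $\|\psi_n\|_U^2$ in front of $\|w_n\|_H^2$ (all in $L^1(0,T)$ uniformly in $n$), together with a small multiple $\sigma C_{N,T}^2 \|w_n\|_{L^2(0,t;V)}^2$ that can be absorbed into the coercive term by choosing $\sigma$ small enough.

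The heart of the argument is the remaining weak-convergence term
\[
\mathcal{I}_n(t) \coloneqq 2\int_0^t \bigl(B(s,u(s))(\psi_n(s)-\psi(s)), w_n(s)\bigr)_H \dd s.
\]
I would apply Lemma \ref{lem: I_3 to zero} with $b \coloneqq B(\cdot,u(\cdot))$ and $\alpha_n \coloneqq -(A(\cdot,u_n)-A(\cdot,u))+B(\cdot,u_n)\psi_n-B(\cdot,u)\psi$, so that $w_n(t)=\int_0^t \alpha_n(s)\dd s$. Because $u\in \MR(0,T)$, Lemma \ref{lem: embeddings rho_j}\ref{it:emb2}, \ref{it:emb4} give $b\in L^2(0,T;\UH)$; Lemma \ref{lem: embeddings rho_j} together with Cauchy--Schwarz on $\nn B(\cdot,u_n)\nn_H\|\psi_n\|_U$ and the embedding $H\into V^*$ yield $\sup_n \|\alpha_n\|_{L^1(0,T;V^*)}<\infty$. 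Thus the lemma delivers $\sup_{t\in[0,T]}|\mathcal{I}_n(t)|\to 0$. Substituting all the above into the chain-rule identity produces an inequality of the shape
\[
\|w_n(t)\|_H^2 + \theta_{N,T}\|w_n\|_{L^2(0,t;V)}^2 \leq \varepsilon_n + \int_0^t M(s)\|w_n(s)\|_H^2 \dd s,
\]
with $\varepsilon_n \coloneqq \sup_{t\in[0,T]}|\mathcal{I}_n(t)|\to 0$ and $M\in L^1(0,T)$ independent of $n$, after which Gronwall (Lemma \ref{lem: gronwall consequence}) yields $\|w_n\|_{\MR(0,T)}\to 0$.

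The main obstacle is the step that isolates $\mathcal{I}_n$: the splitting $B(u_n)\psi_n-B(u)\psi = [B(u_n)-B(u)]\psi_n + B(u)(\psi_n-\psi)$ must place a \emph{fixed}, $\psi$-independent multiplier $B(\cdot,u(\cdot))$ in front of $(\psi_n-\psi)$, without which one cannot exploit weak convergence in $L^2(0,T;U)$ after pairing against $w_n$. A secondary difficulty is that, since no compact embedding $V\into H$ is assumed, one cannot first extract a strongly convergent subsequence of $(u_n)$ in $C([0,T];H)$ and argue by continuity of the coefficients; instead the argument must quantitatively track $w_n$ throughout.
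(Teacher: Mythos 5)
Your proposal is correct and follows essentially the same route as the paper: chain rule on $\|w_n\|_H^2$, local coercivity of $(A_0,B_0)$ combined with the Lipschitz/absorption estimates of Lemma \ref{lem: embeddings rho_j}, a deterministic Gronwall (Lemma \ref{lem: gronwall consequence}), and Lemma \ref{lem: I_3 to zero} applied with the fixed multiplier $b=B(\cdot,u^\psi(\cdot))$ for the weak-convergence term. The only (immaterial) difference is that you linearize the quasilinear part around $u_n=u^{\psi_n}$, whereas the paper linearizes around $u^\psi$; both work since all the relevant $H$-norms are uniformly bounded by $N$.
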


\begin{proof}
Note that $S_K$ is weakly metrizable (as opposed to $L^2(0,T;U)$),
so we may verify sequential continuity.
Suppose that $\psi_n \to \psi$ weakly in $L^2(0,T;U)$ and write $w_n\coloneqq u^{\psi_n}-u^\psi$. We show that $w_n\to 0$ in $\MR(0,T)$.
For each $n\in\N$, $w_n$ is a strong solution to
\begin{align*}
\begin{cases}
  &w_n'+\bar{A_0} w_n=f_n+\big(\bar{B_0} w_n+g_n\big)\psi_n+{b}(\psi_n-\psi), \\
  &w_n(0)=0,
\end{cases}
\end{align*}
where $\bar{A_0}\coloneqq A_0 (u^\psi)$, $\bar{B_0}\coloneqq B_0(u^\psi)$ and
\begin{align*}
f_n&\coloneqq (A_0(u^\psi)-A_0(u^{\psi_n}))u^{\psi_n}+F(u^{\psi_n})-F(u^{\psi})\in L^2(0,T;V^*),\\
g_n&\coloneqq -(B_0(u^\psi)-B_0(u^{\psi_n}))u^{\psi_n}+G(u^{\psi_n})-G(u^\psi)\in L^2(0,T;\UH),\\
{b}&\coloneqq B(u^\psi)=B_0(u^\psi) u^\psi+G(u^\psi)+g\in L^2(0,T;\UH).
\end{align*}
By the chain rule \eqref{eq: Ito pardoux deterministic}, we have for all $t\in[0,T]$:
\begin{align}
 \frac{1}{2} \|w_n(t)\|_H^2 &= \int_0^t -\<\bar{A_0}w_n(s),w_n(s)\?+\<\bar{B_0}w_n(s)\psi_n(s),w_n(s)\?\dd s\notag\\
  &\qquad+\int_0^t\<f_n(s),w_n(s)\?+\<g_n(s)\psi_n(s),w_n(s)\?\dd s\notag\\
  &\qquad\qquad+\int_0^t \<{b}(s)(\psi_n(s)-\psi(s)),w_n(s)\?\dd s\notag\\
  &\eqqcolon I_1^n(t)+I_2^n(t)+I_3^n(t). \label{eq: gronwall prep w_n}
\end{align}
The strategy is now to use Lemma  \ref{lem: gronwall consequence} (Gronwall) for  deriving an estimate of the form
\[
\|w_n\|_{\MR(0,T)}^2\leq C\sup_{t\in[0,T]}|I_3^n(t)|,
\]
after which we will apply Lemma \ref{lem: I_3 to zero} to $I_3^n$ and obtain $w_n\to 0$ in $\MR(0,T)$.
Using the maximal regularity estimate \eqref{eq: MR estimate global sol} and boundedness of $(\psi_n)$ in $L^2(0,T;U)$, we put
\begin{equation}\label{eq: def N}
N\coloneqq  \|u^\psi\|_{\MR(0,T)}+\sup_{n\in\N}\|u^{\psi_n}\|_{\MR(0,T)} <\infty.
\end{equation}
Let  $\theta_{N,T}$, $M_{N,T}$ and  $C_{N,T}$ be as in Assumption \ref{ass: crit var set}. We estimate $I_1^n$ and $I_2^n$ appearing in \eqref{eq: gronwall prep w_n}. The coercivity of $(A_0,B_0)$ in Assumption \ref{it:ass2} gives
\begin{align}\label{eq: gronwall prep w_n I_1^n}
  I_1^n(t)&\leq \int_0^t-\<\bar{A_0}w_n(s),w_n(s)\?+\nn\bar{B_0}w_n(s)\nn_H\|\psi_n(s)\|_U\|w_n(s)\|_H\dd s \notag\\
  &\leq \int_0^t-\<\bar{A_0}w_n(s),w_n(s)\?+\frac{1}{2}\nn\bar{B_0}w_n(s)\nn_H^2+\frac{1}{2}\|\psi_n(s)\|_U^2\|w_n(s)\|_H^2\dd s \notag\\
  &\leq \int_0^t-\theta_{N,T}\|w_n(s)\|_V^2+(M_{N,T}+\frac{1}{2}\|\psi_n(s)\|_U^2)\|w_n(s)\|_H^2\dd s.
\end{align}
Moreover,
\begin{align}\label{eq: gronwall prep w_n I_2^n}
  I_2^n(t)&\leq \int_0^t\|f_n(s)\|_{V^*}\|w_n(s)\|_V+\nn g_n(s)\nn_H\|\psi_n(s)\|_U\|w_n(s)\|_H\dd s\notag\\
  &\leq \int_0^t\frac{1}{\theta_{N,T}}\|f_n(s)\|_{V^*}^2+\frac{\theta_{N,T}}{4}\|w_n(s)\|_V^2+\frac{1}{2}\nn g_n(s)\nn_H^2+\frac{1}{2}\|\psi_n(s)\|_U^2\|w_n(s)\|_H^2\dd s.
\end{align}
For $f_n$, Lemma \ref{lem: embeddings rho_j}\ref{it:emb3}\ref{it:emb5} gives for any $\sigma>0$:
\begin{align}\label{eq: gronwall prep w_n I_2^n f_n appl}
\|f_n\|_{L^2(0,t;V^*)}^2&\leq 2\|(A_0 (u^\psi)-A_0 (u^{\psi_n}))u^{\psi_n}\|_{L^2(0,t;V^*)}^2+2\|F(u^{\psi_n})-F(u^{\psi}) \|_{L^2(0,t;V^*)}^2\notag\\
&\leq 2C_{N,T}^2\int_0^t\|u^{\psi_n}\|_V^2\|w_n\|_H^2\dd s\\
&\qquad+2C_{N,T,\sigma}\int_0^t\left(1+\|u^\psi\|_{V}^2+\|u^{\psi_n}\|_{V}^2\right)\|w_n\|_{H}^2\dd s+2\sigma C_{N,T}^2\|w_n\|_{L^2(0,t;V)}^2.\notag
\end{align}
Similarly, $\|g_n\|_{L^2(0,t;\UH)}^2$ is bounded by the right-hand side of  \eqref{eq: gronwall prep w_n I_2^n f_n appl}, by Lemma \ref{lem: embeddings rho_j}\ref{it:emb3}\ref{it:emb5}.
Fix $\bar{\sigma}\coloneqq  \theta_{N,T}^2(4(2+\theta_{N,T})C_{N,T}^2)^{-1}>0$.
Combining \eqref{eq: gronwall prep w_n I_2^n} and \eqref{eq: gronwall prep w_n I_2^n f_n appl} yields
\begin{align}\label{eq: gronwall prep w_n I_2^n final}
  I_2^n(t) \leq &(\frac{2}{\theta_{N,T}}+1)\left(C_{N,T}^2\int_0^t\|u^{\psi_n}\|_V^2\|w_n\|_H^2\dd s
+C_{N,T,\bar{\sigma}}\int_0^t\left(1+\|u^\psi\|_{V}^2+\|u^{\psi_n}\|_{V}^2\right)\|w_n\|_{H}^2\dd s\right)\notag\\
&\qquad\qquad+(\frac{2}{\theta_{N,T}}+1)\bar{\sigma} C_{N,T}^2\|w_n\|_{L^2(0,t;V)}^2+\frac{\theta_{N,T}}{4}\|w_n\|_{L^2(0,t;V)}^2+\int_0^t\frac{1}{2}\|\psi_n\|_U^2\|w_n\|_H^2\dd s\notag\\
&  = \int_0^t h_n(s)\|w_n(s)\|_H^2\dd s+\frac{\theta_{N,T}}{2}\|w_n\|_{L^2(0,t;V)}^2,
\end{align}
where
\[
h_n(s)\coloneqq (\frac{2}{\theta_{N,T}}+1)\left(C_{N,T}^2\|u^{\psi_n}(s)\|_V^2+ C_{N,T,\bar{\sigma}}(1+\|u^\psi(s)\|_{V}^2+\|u^{\psi_n}(s)\|_{V}^2)  \right)+\frac{1}{2}\|\psi_n(s)\|_U^2.
\]
Note that $\sup_{n\in\N}\|h_n\|_{L^1(0,T)}<\infty$, by \eqref{eq: def N} and since $(\psi_n)\subset S_K$.
Now \eqref{eq: gronwall prep w_n I_1^n} and \eqref{eq: gronwall prep w_n I_2^n final} give
\[
I_1^n(t)+I_2^n(t)\leq -\frac{\theta_{N,T}}{2} \|w_n\|_{L^2(0,t;V)}^2+\int_0^t \Big(h_n(s)+M_{N,T}+\frac{1}{2}\|\psi_n(s)\|_U^2\Big)\|w_n(s)\|_{H}^2\dd s.
\]
Hence, combined with \eqref{eq: gronwall prep w_n}:
\begin{equation*}
\|w_n(t)\|_{H}^2\leq -\theta_{N,T}\|w_n\|_{L^2(0,t;V)}^2 +2\int_0^t \Big(h_n(s)+M_{N,T}+\frac{1}{2}\|\psi_n(s)\|_U^2\Big)\|w_n(s)\|_{H}^2\dd s+2\sup_{s\in[0,t]}|I_3^n(s)|.
\end{equation*}
Lemma \ref{lem: gronwall consequence} (Gronwall) gives for all $n\in\N$:
\begin{equation}\label{eq: gronwall w_n}
 \frac{1}{2}\|w_n\|_{\MR(0,T)}^2\leq \sup_{t\in[0,T]}\|w_n(t)\|_{H}^2+\|w_n\|_{L^2(0,T;V)}^2\leq 2(1+\frac{1}{\theta_{N,T}})\sup_{s\in[0,t]}|I_3^n(s)|\exp(2\kappa),
\end{equation}
with constant $\kappa\coloneqq \sup_{n\in\N}\left(\|h_n\|_{L^1(0,T)}+\frac{1}{2}\|\psi_n\|_{L^2(0,T;U)}^2\right)+M_{N,T}<\infty$.

By \eqref{eq: gronwall w_n}, it remains to show that $\lim_{n\to\infty}\sup_{t\in[0,T]}|I_3^n(t)|=0$. We use Lemma \ref{lem: I_3 to zero}. Note that $\sup_{n\in\N}\|w_n\|_{\MR(0,T)}\leq N$ by \eqref{eq: def N}, so we only have to verify boundedness of $(\alpha_n)\subset L^1(0,T;V^*)$, where $\alpha_n\coloneqq -\bar{A_0}w_n+f_n+(\bar{B_0}w_n+g_n)\psi_n+{b}(\psi_n-\psi)\in L^2(0,T;V^*)+L^1(0,T;H)\subset L^1(0,T;V^*)$. The last inclusion is continuous, so it suffices to prove boundedness of $(-\bar{A_0}w_n+f_n)\subset L^2(0,T;V^*)$ and  $(\beta_n)\coloneqq((\bar{B_0}w_n+g_n)\psi_n+{b}(\psi_n-\psi))\subset L^1(0,T;H)$.
Note that $\|(\bar{B_0}w_n+g_n)\psi_n\|_H\leq \nn \bar{B_0}w_n+g_n\nn_H\|\psi_n\|_U$ with $(\psi_n)$ bounded in $L^2(0,T;U)$ and similar for ${b}(\psi_n-\psi)$. Thus by the Cauchy--Schwarz inequality, if we show that $(\bar{B_0}w_n)$ and $(g_n)$ are bounded in $L^2(0,T;\UH)$, then boundedness of $(\beta_n)\subset L^1(0,T;H)$ follows (${b}\in L^2(0,T;\UH)$ does not depend on $n$).
By symmetry in Assumption \ref{it:ass3}, $\bar{B_0}w_n$ and $g_n$ can be estimated in the same way as $\bar{A_0}w_n\coloneqq A_0(u^\psi)w_n$ and $f_n\coloneqq (A_0(u^\psi)-A_0(u^{\psi_n}))u^{\psi_n}+F(u^{\psi_n})-F(u^{\psi})$, respectively. We provide the estimates for the latter here.
By Lemma \ref{lem: embeddings rho_j}\ref{it:emb2}\ref{it:emb3}, $\|A_0(u^\psi)w_n\|_{L^2(0,T;V^*)}\leq C_{N,T}N<\infty$ and  $\|(A_0(u^\psi)-A_0(u^{\psi_n}))u^{\psi_n}\|_{L^2(0,T;V^*)}\leq C_{N,T}N^2<\infty$.
Furthermore, Lemma \ref{lem: embeddings rho_j}\ref{it:emb4} gives
$\|F(u^{\psi_n})\|_{L^2(0,T;V^*)}\leq \tilde{C}_{N,T} (1+ N)<\infty$.
Finally, $F(u^{\psi})\in L^2(0,T;V^*)$ does not depend on $n$. We conclude that $(-\bar{A_0}w_n+f_n)$ is bounded in $L^2(0,T;V^*)$ and by the considerations above, $(\beta_n)$ is bounded in $L^1(0,T;H)$.
Lemma \ref{lem: I_3 to zero} thus yields $\lim_{n\to\infty}\sup_{t\in[0,T]}|I_3^n(t)|=0$ and \eqref{eq: gronwall w_n} gives $w_n\to 0$ in $\MR(0,T)$.
\end{proof}

\begin{remark}
Proposition \ref{prop:cpt sublevel sets} also ensures measurability of the map $\mathcal{G}^0\colon C([0,T];U_1)\to \MR(0,T)$ defined by \eqref{eq: def G^0}, as required in  Theorem \ref{th: sufficient LDP criterion}.
Note that $\{\int_0^\cdot \psi(s)\dd s:\psi \in L^2(0,T;U)\}=\{v\in W^{1,2}(0,T;U): v(0)=0\}\eqqcolon  W^{1,2}_0$.
By Sobolev embedding \cite[Corollary L.4.6]{HNVWvolume3}, $W^{1,2}_0$ embeds continuously into $C([0,T];U)$ ($W^{1,2}_0$ is a closed subspace of $W^{1,2}(0,T;U)$).
Hence Kuratowski's theorem \cite[Th.\ 15.1]{kechris95} gives $\mathcal{B}(W^{1,2}_0)\subset \mathcal{B}(C([0,T];U))$.
Moreover, $\gamma\colon W^{1,2}_0\to \MR(0,T)\colon \int_0^\cdot \psi(s)\dd s\mapsto u^\psi$ is continuous, since $\int_0^\cdot \psi_n(s)\dd s\to \int_0^\cdot \psi(s)\dd s$ in $W^{1,2}_0$ implies $\psi_n\to\psi$ in $L^2(0,T;U)$, and $L^2(0,T;U)\to\MR(0,T)\colon \psi\mapsto u^\psi$ is norm-continuous since it is weakly sequentially continuous by Proposition \ref{prop:cpt sublevel sets}. It follows that for $E\in \mathcal{B}(\MR(0,T))$, we have
\[
\left(\mathcal{G}^0\right)^{-1}(E)
=\begin{cases}
\gamma^{-1}(E)\in \mathcal{B}(W^{1,2}_0)\subset \mathcal{B}(C([0,T];U)) , &   0\notin E,\\
\gamma^{-1}(E)\cup (C([0,T];U)\setminus W^{1,2}(0))\in \mathcal{B}(C([0,T];U)), & 0\in E.
\end{cases}
\]
Since $U\into U_1$, Kuratowski's theorem yields $\mathcal{B}(C([0,T];U))\subset \mathcal{B}(C([0,T];U_1))$. Thus  $\mathcal{G}^0$ is measurable.
\end{remark}

\subsection{Stochastic continuity criterion}\label{ss:stochcont}

It remains to verify the stochastic continuity criterion \ref{it:suf3} of Theorem \ref{th: sufficient LDP criterion}.
Before we prove that \ref{it:suf3} is satisfied, we first derive some stochastic bounds which we will later apply to  $X^\eps\coloneqq \G^\eps\big(\tilde{W}_1(\cdot)+\frac{1}{\sqrt{\eps}}\int_0^{\cdot} \Psi^\eps(s)\dd s\big)$. In the next lemma we use a stochastic Gronwall lemma as in \cite{AV22variational} to avoid further growth bound assumptions on $B$.

\begin{lemma}\label{lem: X^eps bdd}
Suppose that Assumption \ref{ass: crit var set} holds and suppose that $(A,B)$ satisfies \eqref{eq: coercivity condition (A,B)}.
Let $K>0$, $(\Psi^\eps)_{0< \eps<\frac{1}{2}}\subset \mathcal{A}_K$ and let $x\in H$.
For $\eps\in (0,\frac{1}{2})$, let $X^\eps$ be a strong solution to
  \begin{align}
  &\begin{cases}
    \dd X^\eps(t)=\left(-A(t,X^\eps(t)) +B(t,X^\eps(t))\Psi^\eps(t)\right)\dd t+\sqrt{\eps}B(t,X^\eps(t))\dd W(t), \quad t\in[0,T],\\
    X^\eps(0)=x,
  \end{cases}\label{eq: X^eps SPDE}
  \end{align}
Then there exists $C>0$ such that for all $\gamma>0$,
 \[
 \begin{cases}
 \sup_{\eps\in(0,\frac{1}{2})}\P(\|X^\eps\|_{\MR(0,T)}>\gamma)\leq\frac{C}{\gamma^2},\\
 \sup_{\eps\in(0,\frac{1}{2})}\P(\|B(\cdot,X^\eps(\cdot))\|_{L^2(0,T;\UH)}>\gamma)\leq\frac{C}{\gamma^2}.
 \end{cases}
 \]
 The constant $C$ depends only on $x,K,T$ and $\phi,M,\theta$ from \eqref{eq: coercivity condition (A,B)}.
\end{lemma}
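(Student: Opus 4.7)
The natural strategy is to derive a pathwise energy estimate for $\|X^\eps(t)\|_H^2$ via the chain rule used in the proof of Theorem \ref{th: global well posedness skeleton}, but now with the stochastic correction $\eps \int_0^t \nn B(s,X^\eps(s))\nn_H^2 \dd s$, and then to invoke a stochastic Gronwall inequality (as in \cite{AV22variational}) so as not to need any further growth bounds on $B$. Concretely, It\^o's formula applied to $X^\eps \in \MR(0,T)$ solving \eqref{eq: X^eps SPDE} gives, for $t \in [0,T]$,
\begin{align*}
\|X^\eps(t)\|_H^2 &= \|x\|_H^2 - 2\int_0^t \<A(s,X^\eps(s)),X^\eps(s)\? \dd s + 2\int_0^t (B(s,X^\eps(s))\Psi^\eps(s),X^\eps(s))_H \dd s \\
 &\quad + \eps \int_0^t \nn B(s,X^\eps(s))\nn_H^2 \dd s + 2\sqrt{\eps}\, M^\eps(t),
\end{align*}
where $M^\eps(t) \coloneqq \int_0^t (X^\eps(s), B(s,X^\eps(s))\dd W(s))_H$ is a continuous local martingale.

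\textbf{Key combination.} The coercivity bound \eqref{eq: coercivity condition (A,B)} gives
\[
-2\<A(s,X^\eps),X^\eps\? + \eps\nn B(s,X^\eps)\nn_H^2 \leq -2\theta\|X^\eps\|_V^2 + 2M\|X^\eps\|_H^2 + 2|\phi(s)|^2 - (1-\eps)\nn B(s,X^\eps)\nn_H^2,
\]
and since $\eps < \tfrac{1}{2}$ the coefficient $1-\eps$ exceeds $\tfrac12$, leaving a truly negative $-\tfrac12 \nn B\nn_H^2$ contribution which will produce the second bound for free. The cross term is controlled by Cauchy--Schwarz and Young's inequality,
\[
2(B(s,X^\eps)\Psi^\eps, X^\eps)_H \leq \tfrac14 \nn B(s,X^\eps)\nn_H^2 + 4\|\Psi^\eps(s)\|_U^2 \|X^\eps(s)\|_H^2.
\]
Absorbing $\tfrac14 \nn B\nn_H^2$ into the negative $\tfrac12 \nn B\nn_H^2$ term and collecting yields, for all $t\in [0,T]$,
\begin{align*}
\|X^\eps(t)\|_H^2 &+ 2\theta\|X^\eps\|_{L^2(0,t;V)}^2 + \tfrac14 \|B(\cdot,X^\eps)\|_{L^2(0,t;\UH)}^2 \\
 &\leq \|x\|_H^2 + 2\|\phi\|_{L^2(0,T)}^2 + \int_0^t (2M + 4\|\Psi^\eps(s)\|_U^2)\|X^\eps(s)\|_H^2\dd s + 2\sqrt{\eps}\, M^\eps(t).
\end{align*}

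\textbf{Conclusion.} Because $\Psi^\eps \in \mathcal{A}_K$, the coefficient $s \mapsto 2M + 4\|\Psi^\eps(s)\|_U^2$ has $L^1(0,T)$-norm bounded deterministically by $2MT + 4K^2$, and the quadratic variation of $M^\eps$ satisfies $\<M^\eps\>_t \leq \int_0^t \|X^\eps(s)\|_H^2 \nn B(s,X^\eps(s))\nn_H^2 \dd s$. Applying the stochastic Gronwall lemma from \cite{AV22variational} (a Scheutzow-type statement that handles a local martingale perturbation and only yields $L^p$ or in-probability estimates for $p<1$) to the nonnegative process $\|X^\eps(\cdot)\|_H^2$ produces
\[
\E\bigl[(\sup_{t \in [0,T]} \|X^\eps(t)\|_H^2 + \|X^\eps\|_{L^2(0,T;V)}^2 + \|B(\cdot,X^\eps)\|_{L^2(0,T;\UH)}^2)^p\bigr] \leq C_p,
\]
uniformly in $\eps \in (0,\tfrac12)$, with $C_p$ depending only on $x, K, T, \theta, M, \phi$. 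A standard Markov/Chebyshev inequality (applied with $p = 1$ if the chosen stochastic Gronwall gives first-moment bounds, else with Chebyshev at level $p$) then yields the two claimed bounds of the form $C/\gamma^2$; if Gronwall only gives $L^p$ for $p<1$, one re-runs with $p=2$ on the squared quantities by absorbing the BDG term on the stochastic integral (using $2\sqrt{\eps}\sup_{s\leq t} M^\eps(s)$ and BDG bounded by $\tfrac12 \E \sup_{s\leq t}\|X^\eps(s)\|_H^2 + C\,\eps \E \|B\|_{L^2(0,t;\UH)}^2$, with both excess terms absorbed on the left).

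\textbf{Main obstacle.} The difficulty lies precisely in absorbing the stochastic integral without imposing additional growth conditions on $B$: the quadratic variation of $M^\eps$ involves $\nn B(\cdot, X^\eps)\nn_H^2$, which is itself the quantity we are trying to bound. This is why the retained $-\tfrac12 \nn B\nn_H^2$ coercivity surplus is essential, and why a stochastic Gronwall lemma tailored to $L^0$/$L^p_{\text{weak}}$ estimates (rather than a second-moment BDG-then-Gronwall loop) is the cleanest route.
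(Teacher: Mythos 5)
Your proposal is correct and follows essentially the same route as the paper: the identical It\^o energy estimate, the same use of the coercivity surplus $-\tfrac14 \nn B\nn_H^2$ obtained by combining the It\^o correction ($\eps<\tfrac12$) with Young's inequality on the cross term, followed by a stochastic Gronwall lemma in weak $L^1$ form and Chebyshev. One small caveat: of the fallback options you list, the $L^p$ route with $p<1$ would only yield tail decay $C_p/\gamma^{2p}$, not $C/\gamma^2$; the paper indeed uses the ``cleanest route'' you single out, namely a tail-probability version of stochastic Gronwall \cite[Cor.~5.4b), (50)]{geiss24}, which produces $\P(\sup_t y_\eps(t)>\gamma)\leq \exp(a_\eps(T))\,\E[h(T)]/\gamma$ directly and so gives the exponent $\gamma^{-2}$ without any BDG/localization argument.
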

\begin{proof}
By the It\^o formula \eqref{eq: Ito pardoux},
 by \eqref{eq: coercivity condition (A,B)} and since $\eps<\frac{1}{2}$, we have a.s.\ for all $t\in[0,T]$:
  \begin{align}
    \|X^\eps(t)\|_H^2-\|&x\|_H^2=2\int_0^t\<-A(s,X^\eps(s)),X^\eps(s)\?+\<B(s,X^\eps(s))\Psi^\eps(s),X^\eps(s)\?\dd s\notag\\
    &\qquad\qquad+2\sqrt{\eps}\int_0^t \<X^\eps(s),B(s,X^\eps(s))\dd W(s)\?+\eps\int_0^t \nn B(s,X^\eps(s))\nn_H^2\dd s\notag\\
    &\leq 2\int_0^t-\frac{1}{2}\nn B(s,X^\eps(s))\nn_H^2-\theta\|X^\eps(s)\|_V^2+M\|X^\eps(s)\|_H^2+|\phi(s)|^2\dd s \notag\\
    &\qquad +2\int_0^t\<B(s,X^\eps(s))\Psi^\eps(s),X^\eps(s)\?\dd s\notag\\
    &\qquad\qquad +\eps\int_0^t \nn B(s,X^\eps(s))\nn_H^2\dd s+2\sqrt{\eps}\int_0^r \<X^\eps(s),B(s,X^\eps(s)(\cdot)\?\dd W(s)\notag\\
    &\leq -\int_0^t \nn B(s,X^\eps(s))\nn_H^2\dd s-2\theta\|X^\eps\|_{L^2(0,t;V)}^2+\int_0^t 2M\|X^\eps(s)\|_H^2\dd s+2\|\phi\|_{L^2(0,t)}^2\notag\\
    &\qquad+2\int_0^t \frac{1}{8}\nn B(s,X^\eps(s))\nn_H^2+2\|\Psi^\eps(s)\|_U^2\|X^\eps(s)\|_H^2\dd s\notag\\
    &\qquad\qquad+\frac{1}{2}\int_0^t \nn B(s,X^\eps(s))\nn_H^2\dd s+2\sqrt{\eps}\int_0^t \<X^\eps(s),B(s,X^\eps(s)(\cdot)\?\dd W(s)\notag\\
    &= -\frac{1}{4}\|B(\cdot,X^\eps(\cdot))\|_{L^2(0,t;\UH)}^2
    -2\theta\|X^\eps\|_{L^2(0,t;V)}^2+2\|\phi\|_{L^2(0,t)}^2\notag\\
    &\qquad+\int_0^t 2(M+2\|\Psi^\eps(s)\|_U^2)\|X^\eps(s)\|_H^2\dd s +2\sqrt{\eps}\int_0^t \<X^\eps(s),B(s,X^\eps(s)(\cdot)\?\dd W(s). \label{eq:prep gronwall X^eps}
  \end{align}
We conclude that $
y_\eps(t)\leq h(t)+\int_0^ty_\eps(s)a_\eps(s)\dd s+2\sqrt{\eps}\int_0^t \<X^\eps(s),B(s,X^\eps(s)(\cdot)\?\dd W(s)$, where
\begin{align*}
y_\eps(t)\coloneqq\|X^\eps(t)\|_H^2+2\theta\|X^\eps\|_{L^2(0,t;V)}^2+\frac{1}{4}\| B(\cdot,X^\eps(\cdot))\|_{L^2(0,t;\UH)}^2, \\
h(t)\coloneqq \|x\|_H^2+2\|\phi\|_{L^2(0,T)}^2, \qquad a_\eps(t)\coloneqq  2(M+2\|\Psi^\eps(t)\|_U^2).                                                    
\end{align*}
Now  the stochastic Gronwall inequality \cite[Cor.\ 5.4b), (50)]{geiss24} (with $R\coloneqq2MT+4K^2$) 
gives 
\begin{align*}
\P\Big(\sup_{t\in[0,T]}y_\eps(t)>\gamma\Big)\leq \frac{\exp(2MT+4K^2)}{\gamma}\E[h(T)] \leq\frac{\exp(2MT+4K^2)}{\gamma}\big(\|x\|_H^2+2\|\phi\|_{L^2(0,T)}^2\big)
\end{align*}
for all $\gamma>0$, where we used that $\|\Psi^\eps\|_{L^2(0,T;U)}\leq K$ a.s.\ since $(\Psi^\eps)\subset \mathcal{A}_K$. Using
\begin{align*}
&\Big\{\sup_{t\in[0,T]}y_\eps(t)>\gamma\Big\}\\
&\supset \Big\{\|X^\eps\|_{C([0,T];H)}^2+2\theta\|X^\eps\|_{L^2(0,T;V)}^2>{2\gamma}\Big\}\cup \Big\{\| B(\cdot,X^\eps(\cdot))\|_{L^2(0,T;\UH)}^2>4\gamma\Big\}\\
&\supset \Big\{\|X^\eps\|_{\MR(0,T)}^2>\frac{4\gamma}{1\wedge 2\theta}\Big\}\cup \Big\{\| B(\cdot,X^\eps(\cdot))\|_{L^2(0,T;\UH)}^2>4\gamma\Big\}
\end{align*}
and putting $C\coloneqq \frac{4}{1\wedge 2\theta}\exp(2MT+4K^2)(\|x\|_H^2+2\|\phi\|_{L^2(0,T)}^2)$, yields for all $\eps\in(0,\frac{1}{2})$:
\begin{align*}
&\P(\|X^\eps\|_{\MR(0,T)}^2>\gamma)\leq\frac{C}{\gamma},\quad \P(\|B(\cdot,X^\eps(\cdot))\|_{L^2(0,T;\UH)}^2>\gamma)\leq\frac{C}{\gamma}.
\end{align*}
Consequently, we have  $\P(\|X^\eps\|_{\MR(0,T)}>\gamma)=\P(\|X^\eps\|_{\MR(0,T)}^2>\gamma^2)\leq \frac{C}{\gamma^2}$
and in the same way, $\P(\|B(\cdot,X^\eps(\cdot))\|_{L^2(0,T;\UH)}>\gamma)\leq\frac{C}{\gamma^2}$, uniformly in $\eps\in(0,\frac{1}{2})$.
\end{proof}

We now prove that condition \ref{it:suf3} of Theorem \ref{th: sufficient LDP criterion} is satisfied.

\begin{proposition}\label{prop: X^eps weak conv}
Suppose that Assumption \ref{ass: crit var set} holds and suppose that $(A,B)$ satisfies \eqref{eq: coercivity condition (A,B)}.
Let $(\Psi^\eps)_{0<\eps<\frac{1}{2}}\subset \mathcal{A}_K$  for some $K>0$ and let $x\in H$.
For $\eps\in(0,\frac{1}{2})$, let $X^\eps$ and $u^{\eps}$ be defined by
\[
X^{\eps}\coloneqq \mathcal{G}^{\eps}\Big({\tilde{W}_1}(\cdot)+\frac{1}{\sqrt{\eps}}\int_0^{\cdot}{\Psi}^{\eps}(s)\dd s\Big), \quad u^{\eps}\coloneqq \mathcal{G}^0\Big(\int_0^\cdot \Psi^\eps(s)\dd s\Big),
\]
where $\mathcal{G}^{\eps}:C([0,T];U_1)\to\MR(0,T)$ is the measurable map from Lemma \ref{lem: yamada-watanabe} for $\eps>0$, $\G^0$ is defined by \eqref{eq: def G^0} and $\tilde{W}_1$ by \eqref{eq: Q-Wiener}.
Then $X^\eps- u^{\eps}\to 0$ in probability in $\MR(0,T)$ as $\eps\downarrow0$.
\end{proposition}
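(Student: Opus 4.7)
The plan is to set $Z^\eps := X^\eps - u^\eps$, apply the It\^o formula \eqref{eq: Ito pardoux} to $\|Z^\eps\|_H^2$, close a Gronwall-type estimate up to a suitable stopping time, and use Lemma \ref{lem: X^eps bdd} to absorb the exit probability. Subtracting \eqref{eq: skeleton eq} (with $\psi = \Psi^\eps$) from \eqref{eq: X^eps SPDE} and decomposing
$A(s,X^\eps) - A(s, u^\eps) = A_0(s, X^\eps)Z^\eps + (A_0(s, X^\eps) - A_0(s, u^\eps))u^\eps - (F(s, X^\eps) - F(s, u^\eps))$
(and similarly for $B$) produces, after It\^o's formula, an identity in the spirit of \eqref{eq: gronwall prep w_n} augmented by the stochastic integral $2\sqrt{\eps}\int_0^t \<Z^\eps(s), B(s,X^\eps(s))\dd W(s)\?$ and the It\^o correction $\eps \int_0^t \nn B(s,X^\eps(s))\nn_H^2\dd s$.

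Next, I would use coercivity of $(A_0, B_0)$ (Assumption \ref{it:ass2}) applied with $u = X^\eps$, $v = Z^\eps$ to extract $\theta_N \|Z^\eps\|_{L^2(0,t;V)}^2$ on the left, estimate the $\Psi^\eps$-contributions by Young's inequality (using $\|\Psi^\eps\|_{L^2(0,T;U)} \leq K$ a.s.), and bound all remaining cross terms by Lemma \ref{lem: embeddings rho_j}\ref{it:emb3}\ref{it:emb5}, exactly as in the proof of Proposition \ref{prop:cpt sublevel sets}, choosing the parameter $\sigma$ small enough that the $\sigma C_{N,T}^2 \|Z^\eps\|_{L^2(V)}^2$ term is absorbed into $\theta_N$. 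This yields, on the event $\{\|X^\eps\|_{C([0,t];H)} \vee \|u^\eps\|_{C([0,t];H)} \leq N\}$, an inequality of the form
\begin{equation*}
\|Z^\eps(t)\|_H^2 + \tfrac{\theta_N}{2}\|Z^\eps\|_{L^2(0,t;V)}^2 \leq \int_0^t h^\eps(s)\|Z^\eps(s)\|_H^2\dd s + \eps\|B(\cdot, X^\eps(\cdot))\|_{L^2(0,t;\UH)}^2 + 2\sqrt{\eps}M^\eps(t),
\end{equation*}
where $h^\eps \in L^1(0,T)$ has $\|h^\eps\|_{L^1}$ controlled by $\|X^\eps\|_{\MR}$, $\|u^\eps\|_{\MR}$ and $K$, and $M^\eps$ is a continuous local martingale.

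I would then introduce the stopping time
$\tau_N^\eps := T \wedge \inf\{t \geq 0 : \|X^\eps\|_{\MR(0,t)} \vee \|u^\eps\|_{\MR(0,t)} \vee \|B(\cdot, X^\eps(\cdot))\|_{L^2(0,t;\UH)} > N\}.$
Lemma \ref{lem: X^eps bdd} applied to $X^\eps$, together with the deterministic maximal regularity estimate \eqref{eq: MR estimate global sol} applied pathwise to $u^\eps$ (using $\|\Psi^\eps\|_{L^2(0,T;U)} \leq K$ a.s.), yields $\sup_{\eps \in (0,\frac{1}{2})}\P(\tau_N^\eps < T) \to 0$ as $N \to \infty$. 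On $[0, \tau_N^\eps]$ the quantities $\|h^\eps\|_{L^1}$ and $\|B(\cdot,X^\eps(\cdot))\|_{L^2(0,\tau_N^\eps;\UH)}^2$ are bounded deterministically by some $C_N$, so the stochastic Gronwall inequality of \cite{geiss24} (already used in Lemma \ref{lem: X^eps bdd}) gives $\P(\|Z^\eps\|_{\MR(0,\tau_N^\eps)}^2 > \gamma) \leq C_N \eps/\gamma$ for every $\gamma > 0$. Splitting $\P(\|Z^\eps\|_{\MR(0,T)} > \delta) \leq \P(\tau_N^\eps < T) + \P(\|Z^\eps\|_{\MR(0,\tau_N^\eps)} > \delta)$ and choosing $N$ large before letting $\eps \downarrow 0$ concludes the proof.

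The main obstacle is to keep the dissipation $\theta_N \|Z^\eps\|_{L^2(V)}^2$ intact throughout: the critical Lipschitz bound of Lemma \ref{lem: embeddings rho_j}\ref{it:emb5} produces a term $\sigma C_{N,T}^2 \|Z^\eps\|_{L^2(V)}^2$, and the Young-splitting of the $\Psi^\eps$-factors leaves multipliers of $\|Z^\eps\|_H^2$ that must remain $L^1$-integrable in time. Combined with the fact that the It\^o correction $\eps\nn B(X^\eps)\nn_H^2$ only has an a priori bound in probability (Lemma \ref{lem: X^eps bdd}), this is precisely what forces the combination of pathwise truncation with a stochastic Gronwall argument, and this is how the weak convergence criterion is established without any further regularity assumption on $B$.
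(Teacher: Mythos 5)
Your proposal is correct and gives a genuinely different route from the paper's. The paper works with the cut--off \emph{event} $E_{n,\eps} = \{\|X^\eps\|_{\MR(0,T)}\le n\}\cap\{\|u^\eps\|_{\MR(0,T)}\le N\}$, applies the \emph{deterministic} Gronwall inequality (Lemma~\ref{lem: gronwall consequence}) pathwise for $\om\in E_{n,\eps}$ to reduce everything to $I_3^\eps(T)+\sup_t|I_4^\eps(t)|$, and then handles the two remaining pieces separately: the It\^o-correction term $I_3^\eps$ by a direct probability estimate via Lemma~\ref{lem: X^eps bdd}, and the stochastic integral $I_4^\eps$ via the quadratic-variation criterion of \cite[Prop.~18.6]{kallenberg21} together with a further split between $\|X^\eps-u^\eps\|_{C([0,T];H)}$ and $\|B(\cdot,X^\eps(\cdot))\|_{L^2}$ being large. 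You instead localize with the \emph{stopping time} $\tau_N^\eps$, so the It\^o correction is bounded deterministically by $\eps N^2$ and the Gronwall coefficient $\|h^\eps\|_{L^1(0,\tau_N^\eps)}$ by a constant $C_N$, and then a single application of the stochastic Gronwall inequality of \cite{geiss24} swallows the local martingale outright, giving $\P(\|Z^\eps\|_{\MR(0,\tau_N^\eps)}^2>\gamma)\le C_N'\eps/\gamma$. The two localizations play the same role; the trade-off is that your route is more unified (one stochastic Gronwall application absorbs both $I_3^\eps$ and $I_4^\eps$ and no separate quadratic-variation argument is needed), whereas the paper's route is slightly more hands-on and uses the stochastic Gronwall lemma only once, in Lemma~\ref{lem: X^eps bdd}. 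Both give the conclusion for the same reason: the critical local Lipschitz bounds of Lemma~\ref{lem: embeddings rho_j} produce, after Young, a $\sigma\|Z^\eps\|_{L^2(V)}^2$ piece that must be absorbed into the coercivity dissipation, and the remaining multiplier of $\|Z^\eps\|_H^2$ is $L^1$ in time with an a.s.-deterministic bound on the localized set. Note in passing that your stopping time includes $\|u^\eps\|_{\MR(0,t)}>N$; by the pathwise maximal regularity estimate \eqref{eq: MR estimate global sol} this never triggers for $N$ large, so the exit probability is controlled entirely by Lemma~\ref{lem: X^eps bdd}, exactly as you say.
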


\begin{proof}
We will apply It\^o's formula and Assumption \ref{ass: crit var set}. However, because the estimates in Assumption \ref{ass: crit var set} are $n$-dependent, below we use a cut-off argument to reduce to processes that are bounded by $n$ in $H$-norm.

By Definition \ref{def: S_K A_K}, we have a.s. $\|\Psi^\eps\|_{L^2(0,T;U)}\leq K<\infty$. Thus, recalling \eqref{eq: def G^0}, we have for  a.e. $\om\in\Om$: $u^\eps(\om)=u^{\Psi_\eps(\om)}$, where the latter is the unique strong solution  (Theorem \ref{th: global well posedness skeleton}) to \eqref{eq: skeleton eq} with $\psi=\Psi^\eps(\om)\in S_K$. Furthermore, the maximal regularity estimate \eqref{eq: MR estimate global sol} gives
\begin{align}\label{eq: def esssup N}
N\coloneqq \esssup_{\om\in\Om}\sup_{\eps\in(0,\frac{1}{2})}\|u^\eps(\om)\|_{\MR(0,T)}<\infty.
\end{align}

On the other hand, for $X^\eps$ we do not have a.s. $\sup_{\eps\in(0,\frac{1}{2})}\|X^\eps\|_{C([0,T];H)}<\infty$, but we do have the boundedness in probability from Lemma \ref{lem: X^eps bdd}.
For $\eps\in(0,\frac{1}{2})$ and $n\in\N$, define
\[
E_{n,\eps}\coloneqq \{\|X^\eps\|_{\MR(0,T)}\leq n\}\cap \{\|u^{\eps}\|_{\MR(0,T)}\leq N\}.
\]
By Lemma \ref{lem: yamada-watanabe}, $X^\eps$ is a strong solution to \eqref{eq: X^eps SPDE},
so thanks to Lemma \ref{lem: X^eps bdd} and \eqref{eq: def esssup N},
\[
\P(E_{n,\eps}^c)= \P(\|X^\eps\|_{\MR(0,T)}>n) \leq\frac{C}{n^2},
\]
where $C$ is a constant independent of $\eps$.
Hence, for all $\eps\in(0,\frac{1}{2})$ and $n\in\N$:
\begin{align*}
\P(\|X^\eps- u^{\eps}\|_{\MR(0,T)}>\gamma)
&\leq \P(\{\|X^\eps- u^{\eps}\|_{\MR(0,T)}>\gamma\}\cap E_{n,\eps})+\P(E_{n,\eps}^c)\\
&\leq \P(\{\|X^\eps- u^{\eps}\|_{\MR(0,T)}>\gamma\}\cap E_{n,\eps})+\frac{C}{n^2}.
\end{align*}
Therefore, to have the stated convergence in probability, it suffices to prove that for any $\delta>0$ and any large enough $n\in\N$:
\begin{equation}\label{eq: X^eps weak conv to show 2}
  \lim_{\eps\downarrow 0}\P(\{\|X^\eps- u^{\eps}\|_{\MR(0,T)}>\delta\}\cap E_{n,\eps})=0.
\end{equation}

Let $n\geq N$ be arbitrary, where $N$ is given by \eqref{eq: def esssup N}.
We prove \eqref{eq: X^eps weak conv to show 2}.
By the It\^o formula \eqref{eq: Ito pardoux},
we have for all $t\in[0,T]$:
  \begin{align*}
    \|X^\eps(t)-u^{\eps}(t)\|_H^2
    &=2\int_0^t\<-A(s,X^\eps(s))+A(s,u^{\eps}(s)),X^\eps(s)-u^{\eps}(s)\?\dd s\\
    &\qquad+2\int_0^t\<\big(B(s,X^\eps(s))-B(s,u^{\eps}(s))\big)\Psi^\eps(s),X^\eps(s)-u^{\eps}(s)\?\dd s\\
    &\qquad+\eps\int_0^t \nn B(s,X^\eps(s))\nn_H^2\dd s\\
    &\qquad+2\sqrt{\eps}\int_0^t \<X^\eps(s)-u^{\eps}(s),B(s,X^\eps(s))\dd W(s)\?\\
    &\eqqcolon {I}_1^\eps(t)+{I}_2^\eps(t)+{I}_3^\eps(t)+{I}_4^\eps(t).
    \end{align*}
    Below we derive an estimate of the form
    \begin{equation}\label{eq: to show I^1 I^2}
    {I}_1^\eps(t)+{I}_2^\eps(t)\leq -\theta_{n,T}\|X^\eps-u^{\eps}\|_{L^2(0,t;V)}^2+\int_0^t |h_{n,\eps}(s)|\|X^\eps(s)-u^{\eps}(s)\|_H^2 \dd s
    \end{equation}
    that holds a.s.\ on the set $E_{n,\eps}$, for every $t\in[0,T]$ and $\eps\in(0,\frac{1}{2})$. Here, $\theta_{n,T}$ is a constant and a.s.\ $h_{n,\eps}\in L^1(0,T)$, with $\alpha_n\coloneqq \sup_{\eps\in(0,\frac{1}{2})}\esssup_\Om \|h_{n,\eps}\one_{E_{n,\eps}}\|_{L^1(0,T)}<\infty$.
    Then, a.s.\ on $E_{n,\eps}$,
    \[
    \|X^\eps(t)-u^{\eps}(t)\|_H^2\leq -\theta_{n,T}\|X^\eps-u^{\eps}\|_{L^2(0,t;V)}^2+{I}_3^\eps(t)+\sup_{r\in[0,t]}{I}_4^\eps(r)+\int_0^t |h_{n,\eps}(s)|\|X^\eps(s)-u^{\eps}(s)\|_H^2 \dd s,
    \]
    so Lemma \ref{lem: gronwall consequence} (Gronwall)
    gives pointwise in a.e. $\omega\in E_{n,\eps}$:
    \begin{equation*}
    \|X^\eps-u^{\eps}\|_{C([0,T];H)}^2+\|X^\eps-u^{\eps}\|_{L^2(0,T;V)}^2\leq (1+{\theta_{n,T}}^{-1})\exp(\alpha_n)\Big({I}_3^\eps(T)+\sup_{t\in[0,T]}|{I}_4^\eps(t)|\Big).
    \end{equation*}
Putting $c_{n}\coloneqq 2(1+{\theta_{n,T}}^{-1})\exp({\alpha_n})$, we thus have $\|X^\eps-u^{\eps}\|_{\MR(0,T)}^2\leq c_{n}\Big({I}_3^\eps(T)+\sup_{t\in[0,T]}|{I}_4^\eps(t)|\Big)$ a.s.\ on $E_{n,\eps}$,  and therefore,
    \[
    \P(\{\|X^\eps- u^{\eps}\|_{\MR(0,T)}^2>\delta\}\cap E_{n,\eps})\leq \sum_{i=3}^4\P(\{\sup_{t\in[0,T]}|{I}_i^\eps(t)|>\frac{\delta}{2c_n}\}\cap E_{n,\eps}).
    \]
Hence, after we have proved \eqref{eq: to show I^1 I^2}, for \eqref{eq: X^eps weak conv to show 2}, it suffices to prove two convergences in probability:
    \begin{equation}\label{eq: to show I^3}
    \lim_{\eps\downarrow 0}\P({I}_3^\eps(T)> \delta)=0 \text{ for any } \delta>0,
    \end{equation}
    \begin{equation}\label{eq: to show I^4}
    \lim_{\eps\downarrow 0}\P(\sup_{t\in[0,T]}|{I}_4^\eps(t)|>\delta)=0 \text{ for any } \delta>0.
    \end{equation}
    All in all, recalling that we reduced the original problem to proving \eqref{eq: X^eps weak conv to show 2}, by the reasoning above it remains to establish \eqref{eq: to show I^1 I^2}, \eqref{eq: to show I^3} and \eqref{eq: to show I^4}.

    Let us prove \eqref{eq: to show I^1 I^2}. Recall that  $A(t,v)=A_0(t,v)v-F(t,v)-f$ and  $B(t,v)=B_0(t,v)v+G(t,v)+g$, see Assumption \ref{it:ass1}. We have pointwise on $E_{n,\eps}$, for all $\eps\in(0,\frac{1}{2})$:
    \begin{align}
      \frac{1}{2}({I}_1^\eps(t)&+{I}_2^\eps(t))=\int_0^t \<-A_0(s,u^{\eps}(s))(X^\eps(s)-u^{\eps}(s)),X^\eps(s)-u^{\eps}(s)\?\dd s\notag\\
    &\qquad+ \int_0^t\<\big(A_0(s,u^{\eps}(s))-A_0(s,X^\eps(s))\big)X^\eps(s),X^\eps(s)-u^{\eps}(s)\?\dd s\notag\\
    &\qquad + \int_0^t\<F(X^\eps(s))-F(u^{\eps}(s)),X^\eps(s)-u^{\eps}(s)\?\dd s\notag\\
    &\qquad+\int_0^t \<B_0(s,u^{\eps}(s))(X^\eps(s)-u^{\eps}(s))\Psi^\eps(s),X^\eps(s)-u^{\eps}(s)\?\dd s\notag\\
    &\qquad+\int_0^t \<\big(B_0(s,X^\eps(s))-B_0(s,u^{\eps}(s))\big)X^\eps(s)\Psi^\eps(s),X^\eps(s)-u^{\eps}(s)\?\dd s\notag\\
    &\qquad+\int_0^t\<\big(G(X^\eps(s))-G(u^{\eps}(s))\big)\Psi^\eps(s),X^\eps(s)-u^{\eps}(s)\?\dd s\notag\\
    &\leq \int_0^t \<-A_0(s,u^{\eps}(s))(X^\eps(s)-u^{\eps}(s)),X^\eps(s)-u^{\eps}(s)\?\dd s\notag\\
    &\qquad+\int_0^t\frac{1}{2} \nn B_0(s,u^{\eps}(s))(X^\eps(s)-u^{\eps}(s))\nn_H^2+\frac{1}{2}\|\Psi^\eps(s)\|_U^2\|X^\eps(s)-u^{\eps}(s)\|_H^2\dd s\notag\\
    &\qquad+ \int_0^t\|\big(A_0(s,u^{\eps}(s))-A_0(s,X^\eps(s))\big)X^\eps(s)\|_{V^*}\|X^\eps(s)-u^{\eps}(s)\|_V\dd s\notag\\
    &\qquad + \int_0^t\|F(X^\eps(s))-F(u^{\eps}(s))\|_{V^*}\|X^\eps(s)-u^{\eps}(s)\|_V\dd s\notag\\
    &\qquad+\int_0^t \nn\big(B_0(s,X^\eps(s))-B_0(s,u^{\eps}(s))\big)X^\eps(s)\nn_H\|\Psi^\eps(s)\|_U\|X^\eps(s)-u^{\eps}(s)\|_H\dd s\notag\\
    &\qquad+\int_0^t\nn G(X^\eps(s))-G(u^{\eps}(s))\nn_H\|\Psi^\eps(s)\|_U \|X^\eps(s)-u^{\eps}(s)\|_H\dd s\notag\\
    &\leq \int_0^t -\theta_{n,T}\|X^\eps(s)-u^{\eps}(s)\|_V^2\dd s\notag\\
    &\qquad+\int_0^t \left(M_{n,T}+\frac{1}{2}\|\Psi^\eps(s)\|_U^2\right)\|X^\eps(s)-u^{\eps}(s)\|_H^2\dd s\notag\\
    &\qquad+ \int_0^tC_\sigma\|\big(A_0(s,u^{\eps}(s))-A_0(s,X^\eps(s))\big)X^\eps(s)\|_{V^*}^2+\sigma\|X^\eps(s)-u^{\eps}(s)\|_V^2\dd s\notag\\
    &\qquad + \int_0^tC_\sigma\|F(X^\eps(s))-F(u^{\eps}(s))\|_{V^*}^2+\sigma\|X^\eps(s)-u^{\eps}(s)\|_V^2\dd s\notag\\
    &\qquad+\int_0^t \frac{1}{2} \nn\big(B_0(s,X^\eps(s))-B_0(s,u^{\eps}(s))\big)X^\eps(s)\nn_H^2+\frac{1}{2}\|\Psi^\eps(s)\|_U^2\|X^\eps(s)-u^{\eps}(s)\|_H^2\dd s\notag\\
    &\qquad+\frac{1}{2}\int_0^t\nn G(X^\eps(s))-G(u^{\eps}(s))\nn_H^2+\frac{1}{2}\|\Psi^\eps(s)\|_U^2 \|X^\eps(s)-u^{\eps}(s)\|_H^2\dd s\notag\\
    &\eqqcolon  -\theta_{n,T}\|X^\eps-u^{\eps}\|_{L^2(0,t;V)}^2+{J}_{1}^{\eps}(t)+{J}_{2}^{\eps,\sigma}(t)+{J}_{3}^{\eps,\sigma}(t)+{J}_{4}^{\eps}(t)+{J}_{5}^{\eps}(t) \label{eq: prep 1 pointwise gronwall X^eps-u^psi}
    \end{align}
for any $\sigma>0$, where $C_\sigma\coloneqq \frac{1}{4\sigma}$ from Young's inequality and $\theta_{n,T}$ and $M_{n,T}$ are the constants from the local coercivity of $(A_0,B_0)$ in Assumption \ref{it:ass2}.

Next, we estimate the terms of \eqref{eq: prep 1 pointwise gronwall X^eps-u^psi}.
    ${J}_1^\eps$ is already in the desired form for application of Gronwall's inequality. Moreover,
    Lemma \ref{lem: embeddings rho_j}\ref{it:emb3} yields
    \begin{align}
    J_2^{\eps,\sigma}(t)
    &\leq  C_\sigma C_{n,T}^2 \int_0^t\|X^\eps(s)-u^{\eps}(s)\|_H^2\|X^\eps(s)\|_V^2\dd s +\sigma \|X^\eps-u^{\eps}\|_{L^2(0,t;V)}^2, \label{eq: gronwall est J_2}\\
    J_4^\eps(t)&\leq  \frac{1}{2} \int_0^t\|X^\eps(s)-u^{\eps}(s)\|_H^2\big( C_{n,T}^2\|X^\eps(s)\|_V^2+\|\Psi^\eps(s)\|_U^2\big)\dd s.\label{eq: gronwall est J_4}
    \end{align}
    Similarly,
    Lemma \ref{lem: embeddings rho_j}\ref{it:emb5} gives for any $\tilde{\sigma}>0$:
    \begin{align}\label{eq: gronwall est J_3}
    J_3^{\eps,\sigma}(t)
    &\leq  C_\sigma C_{n,T,\tilde{\sigma}} \int_0^t\|X^\eps(s)-u^{\eps}(s)\|_H^2\big(1+\|X^\eps(s)\|_V^2+\|u^{\eps}(s)\|_V^2\big)\dd s \notag\\
    &\qquad+C_\sigma\tilde{\sigma}C_{n,T}^2\|X^\eps-u^{\eps}\|_{L^2(0,t;V)}^2+\sigma \|X^\eps-u^{\eps}\|_{L^2(0,t;V)}^2
    \end{align}
    for some constant $C_{n,T,\tilde{\sigma}}>0$,
    and
    \begin{align}\label{eq: gronwall est J_5}
    J_5^\eps(t)&\leq   \frac{1}{2}\int_0^t\|X^\eps(s)-u^{\eps}(s)\|_H^2\left(C_{n,T,\tilde{\sigma}}(1+\|X^\eps(s)\|_V^2+\|u^{\eps}(s)\|_V^2)+\|\Psi^\eps(s)\|_U^2\right)\dd s \notag\\
    &\qquad+\frac{1}{2}\tilde{\sigma}C_{n,T}^2\|X^\eps-u^{\eps}\|_{L^2(0,t;V)}^2.
    \end{align}
    Now we fix $\sigma\coloneqq \frac{\theta_{n,T}}{8}$. Then we fix $\tilde{\sigma}\coloneqq \frac{\theta_{n,T}}{8C_{n,T}^2(C_\sigma\vee\frac{1}{2})}$.
    Combining estimates \eqref{eq: gronwall est J_2}-\eqref{eq: gronwall est J_5} with \eqref{eq: prep 1 pointwise gronwall X^eps-u^psi}, we obtain
    \begin{align*}
     {I}_1^\eps(t)+{I}_2^\eps(t)&\leq -\theta_{n,T}\|X^\eps-u^{\eps}\|_{L^2(0,t;V)}^2+\int_0^t h_{n,\eps}(s)\|X^\eps(s)-u^{\eps}(s)\|_H^2\dd s,
    \end{align*}
    where $h_{n,\eps}$ is of the form
    \begin{align}
    h_{n,\eps}(s)&=
    C_{n,T,\sigma,\tilde{\sigma}}\left(1+\|\Psi^\eps(s)\|_U^2+\|X^\eps(s)\|_V^2+\|u^{\eps}(s)\|_V^2\right), \label{eq: bound on h_n,eps}
    \end{align}
    for a constant $C_{n,T,\sigma,\tilde{\sigma}}>0$. Now, a.s.\ $\Psi^\eps\in L^2(0,T;U)$ and a.s.\ $X^\eps,u^{\eps}\in L^2(0,T;V)$, thus a.s.\ $h_{n,\eps}\in L^1(0,T)$.  By definition of $E_{n,\eps}$, by \eqref{eq: bound on h_n,eps} and since $(\Psi^\eps)\subset \mathcal{A}_K$, we have a.s.\ $\|h_{n,\eps}\one_{E_{n,\eps}}\|_{L^1(0,T)}\leq C_{n,T,\sigma,\tilde{\sigma}}(T+K^2+2n^2)$ for every $\eps\in(0,\frac{1}{2})$. Thus, $h_{n,\eps}$ has all required properties and \eqref{eq: to show I^1 I^2} is satisfied a.s.\ on the set $E_{n,\eps}$, for every $\eps\in(0,\frac{1}{2})$, as desired.

    Regarding \eqref{eq: to show I^3}, by Lemma \ref{lem: X^eps bdd} we have for any (fixed) $\delta>0$:
    \begin{align*}
    \lim_{\eps\downarrow 0}\P({I}_3^\eps(T)>\delta)
    =\lim_{\eps\downarrow 0}\P( \| B(\cdot,X^\eps(\cdot))\|_{L^2(0,T;\UH)}^2>{\delta}{\eps}^{-1})
    \leq \lim_{\eps\downarrow 0}{C\eps}{\delta}^{-1}=0.
    \end{align*}

     It remains to prove \eqref{eq: to show I^4}.
    Note that ${I}_4^\eps$ is a continuous local martingale (using Lemma \ref{lem: X^eps bdd}) starting at zero, with $[{I}_4^\eps](T)=\int_0^T\eps\|\<X^\eps(s)-u^{\eps}(s),B(s,X^\eps(s))(\cdot)\?\|_{\mathcal{L}_2(U,\R)}^2\dd s$, where $[I_4^\eps]$ denotes the  quadratic variation.
    Thus, by  \cite[Prop.\ 18.6]{kallenberg21}, \eqref{eq: to show I^4} is equivalent to
    \begin{equation}\label{eq: I^4 sufficient convergence}
    \lim_{\eps\downarrow0}\P\Big(\eps\int_0^T\|\<X^\eps(s)-u^{\eps}(s),B(s,X^\eps(s))(\cdot)\?\|_{\mathcal{L}_2(U,\R)}^2\dd s>\delta\Big)=0 \text{ for all } \delta>0.
    \end{equation}
    We prove the latter. We have for all $\delta>0$ and $\eps\in(0,\frac{1}{2})$:
    \begin{align}
    \P\Big(\eps\int_0^T\|\<X^\eps(s)-&u^{\eps}(s),B(s,X^\eps(s))(\cdot)\?\|_{\mathcal{L}_2(U,\R)}^2\dd s>\delta\Big)\notag\\
    &\leq\P\Big(\int_0^T \|X^\eps(s)-u^{\eps}(s)\|_H^2\nn B(s,X^\eps(s))\nn_H^2\dd s>{\delta}{\eps}^{-1}\Big)\notag\\
    &\leq \P\Big(\|X^\eps-u^{\eps}\|_{C([0,T];H)}^2 \int_0^T \nn B(s,X^\eps(s))\nn_H^2\dd s>{\delta}{\eps}^{-1}\Big)\notag\\
    &\leq \P\big(\|X^\eps-u^{\eps}\|_{C([0,T];H)}>({\delta}{\eps}^{-1})^{\frac{1}{4}}\big)+ \P\big(\| B(\cdot,X^\eps(\cdot))\|_{L^2(0,T;\UH)} >({\delta}{\eps}^{-1})^{\frac{1}{4}}\big).\label{eq: weak L1 estimates}
    \end{align}
    Due to Lemma \ref{lem: X^eps bdd}, we have
    \begin{align}
     \P\big(\|X^\eps-u^{\eps}\|_{C([0,T];H)}>({\delta}{\eps}^{-1})^{\frac{1}{4}}\big)
      & \leq \P\big(\|X^\eps\|_{C([0,T];H)}>\tfrac{1}{2}({\delta}{\eps}^{-1})^{\frac{1}{4}}\big)+\P\big(\|u^{\eps}\|_{C([0,T];H)}>\tfrac{1}{2}({\delta}{\eps}^{-1})^{\frac{1}{4}}\big)\notag\\
     & \leq 4C({\eps}{\delta}^{-1})^{\tfrac{1}{2}}+\P\big(\|u^{\eps}\|_{C([0,T];H)}>\frac{1}{2}({\delta}{\eps}^{-1})^{\frac{1}{4}}\big),
      \label{eq: fact difference}
      \end{align}
      \begin{align}
    \P\big(\|B(\cdot,X^\eps(\cdot))\|_{L^2(0,T;\UH)}> ({\delta}{\eps}^{-1})^{\frac{1}{4}}\big)\leq C({\eps}{\delta}^{-1})^{\frac{1}{2}}.\hspace{5cm} \label{eq: fact B(X^eps)}
    \end{align}
    Note that  $\P\big(\|u^{\eps}\|_{C([0,T];H)}>\tfrac{1}{2}({\delta}{\eps}^{-1})^{\frac{1}{4}}\big)=0$ for all $\eps\in(0,\frac{\delta}{16N^4}\wedge \frac{1}{2})$ by \eqref{eq: def esssup N}.
Thus, combining \eqref{eq: fact difference}, \eqref{eq: fact B(X^eps)} and continuing from \eqref{eq: weak L1 estimates}, we see that for all $\eps\in(0,\frac{\delta}{16N^4}\wedge \frac{1}{2})$:
\[
\P\Big(\eps\int_0^T\|\<X^\eps(s)-u^{\eps}(s),B(s,X^\eps(s))(\cdot)\?\|_{\mathcal{L}_2(U,\R)}^2\dd s>\delta\Big)\leq 5C({\eps}{\delta}^{-1})^{\frac{1}{2}}.
\]
Letting $\eps\downarrow 0$ we arrive at \eqref{eq: I^4 sufficient convergence}.
\end{proof}

\subsection{Proof of Theorem \ref{th: main LDP theorem}}

Proving Theorem \ref{th: main LDP theorem} is now only a matter of combining.

\begin{proof}
  We verify the criteria of Theorem \ref{th: sufficient LDP criterion}. Note that $\mathcal{E}\coloneqq \MR(0,T)$ is Polish.
  Define $\mathcal{G}^0$ by \eqref{eq: def G^0} and for $\eps>0$, let $\mathcal{G}^\eps$ be the measurable map from Lemma \ref{lem: yamada-watanabe}.
  Now, \ref{it:suf1} holds by Lemma \ref{lem: yamada-watanabe}, \ref{it:suf2} holds by Proposition \ref{prop:cpt sublevel sets} and \ref{it:suf3} holds by Proposition \ref{prop: X^eps weak conv}. The proof is complete.
\end{proof}

Lastly, a small remark.

\begin{remark}\label{rem: LLN}
The LDP of Theorem \ref{th: main LDP theorem} implies the following Strong Law of Large Numbers: we have $Y^\eps\to Y^0$ a.s.\ as $\eps\downarrow0$, where $Y^0$ solves \eqref{eq: SPDE Y^eps} with $\eps=0$, i.e. with only the drift term. This follows from the Borel-Cantelli lemma and the fact that the rate function has a unique zero at $Y^0$. Indeed, $I(Y^0)=I(u^0)=0$ and if $I(z)=0$, one finds $(\psi_n)\subset L^2(0,T;U)$ with $z=u^{\psi_n}$ and $\|\psi_n\|_{L^2(0,T;U)}\to 0$. Then, $u^{\psi_n}\to z$ in $\MR(0,T)$ and by Proposition \ref{prop:cpt sublevel sets}, $u^{\psi_n}\to u^0=Y^0$ in $\MR(0,T)$, thus $\{z\in \MR(0,T):I(z)=0\}=\{Y^0\}$.
\end{remark}

\section{Application to fluid dynamics}\label{ss:fluid}
In this subsection, we apply our results to an abstract fluid dynamics model considered in several earlier works. We closely follow the presentation of \cite{AVsurvey,chueshovmillet10} and focus on what the large deviation principle of Theorem \ref{th: main LDP theorem} becomes in this setting. Afterwards, we specialize to the Navier--Stokes equations with gradient noise to make our results even more concrete.  

\subsection{Abstract model}
The abstract form of the problem we consider is as follows
\begin{equation}\label{eq:abstractfluid}
\left\{
\begin{aligned}
&\dd Y^{\varepsilon}(t) + A_0(t) Y^{\varepsilon} \,\dd t = \Phi(Y^{\varepsilon}(t), Y^{\varepsilon}(t)) \,\dd t + \sqrt{\varepsilon} \big(B_0(t) Y^{\varepsilon}(t) + G(t,Y^{\varepsilon}(t))\big) \,\dd W(t), 
\\ &u(0) = x.
\end{aligned}\right.
\end{equation}
Here, $\Phi$ is supposed to take care of the typical bilinear term appearing in equations in fluid dynamics. In particular, all of the following models can be included in the abstract framework below: 2D Navier--Stokes, 2D Boussinesq equations, quasigeostrophic equations, 2D magneto-hydrodynamic equations, 2D magnetic B\'enard problem, 3D Leray $\alpha$-model for Navier--Stokes equations and shell models of turbulence. 

To put this problem in the setting of \eqref{eq: original stoch ev eq} and Assumption \ref{ass: crit var set}, we assume the following. 
\begin{assumption}\label{ass:bilinearFluid}
\
\begin{enumerate}
\item\label{it1:bilinearFluid} $A_0\colon\R_+\to \calL(V, V^*)$ and $B_0\colon\R_+\to \calL(V,\calL_2(U,H))$ are measurable and for all $T>0$, $\sup_{t\in[0,T]}\|A_0(t)\|_{\calL(V, V^*)}<\infty$ and $\sup_{t\in[0,T]}\|B_0(t)\|_{\calL(V, \calL_2(U,H))}<\infty$. Moreover, for all $T>0$, there exist $\theta>0$ and $M\geq 0$ such that for all $v\in V$  and $t\in [0,T]$, 
    \[\lb v, A_0(t)v\rb  - \tfrac{1}{2} \|B_0(t) v\|_{\calL_2(U,H)}^2 \geq \theta\|v\|_V^2- M \|v\|_H^2.\]

\item\label{it2:bilinearFluid} For some $\beta_1\in(\frac{1}{2},\frac{3}{4}]$, $\Phi\colon V_{\beta_1}\times V_{\beta_1}\to V^*$ is bilinear and satisfies
\[\|\Phi(u, v)\|_{V^*}\leq C\|u\|_{\beta_1} \|v\|_{\beta_1}, \quad \lb u,\Phi(u,u)\rb = 0, \qquad u,v\in V.\]

\item\label{it3:bilinearFluid} For some $\beta_2\in (\frac12, 1)$, $G\colon\R_+\times V_{\beta_2}\to \calL_2(U,H)$  is measurable and satisfies the following Lipschitz conditions: for all $T>0$, there exists a constant $C$ such that for all $u,v\in V_{\beta_2}$ and $t\in [0,T]$, 
    \[\|G(t,u) - G(t,v)\|_{\calL_2(U,H)}\leq C \|u-v\|_{V_{\beta_2}}  \ \ \text{and} \ \ \|G(t,u)\|_{\calL_2(U,H)}\leq C(1+\|u\|_{V_{\beta_2}}).\]
\end{enumerate}
\end{assumption}

The associated skeleton equation is given by
\begin{equation}\label{eq: skeleton eqfluid}
  \begin{cases}
  &(u^\psi)'(t) + A_0(t) u^\psi(t) = \Phi(u^{\psi}(t), u^{\psi}(t)) + \big(B_0(t)u^\psi(t) + G(t,u^\psi(t)) \big)\psi(t), \quad t\in[0,T],\\
  &u^\psi(0)=x.
\end{cases}
\end{equation}

\begin{theorem}\label{thm:fluidabstract}
Suppose that Assumption \ref{ass:bilinearFluid} holds,
Then for every $x\in H$ and $\varepsilon\in (0,1]$, the problem \eqref{eq:abstractfluid} has a unique global solution 
\[
Y^{\varepsilon} \in L^2_{\rm loc}([0,\infty);V)\cap C([0,\infty);H) \ \text{a.s.}
\]
Moreover, for every $T>0$, $(Y^\eps)$ satisfies the LDP on $L^2(0,T;V)\cap C([0,T];H)$ with rate function $I\colon L^2_{\rm loc}(0,T;V)\cap C([0,T];H)\to [0,+\infty]$ given by
\[
I(z)=\frac{1}{2}\inf\Big\{\int_0^T\|\psi(s)\|_U^2\dd s : \psi\in L^2(0,T;U), z=u^{\psi}\Big\},
\]
where $\inf\varnothing\coloneqq +\infty$ and $u^\psi$ is the strong solution to \eqref{eq: skeleton eqfluid}.
\end{theorem} 
\begin{proof}
  In \cite[Th.\ 7.10]{AVsurvey} it is shown that Assumption \ref{ass:bilinearFluid} is satisfied, noting that the arguments also work for the time-dependent setting. Thus well-posedness follows from Theorem \ref{th: original stoch ev eq global well-posedness} and the large deviation principle follows from Theorem \ref{th: main LDP theorem}. 
\end{proof}  

\subsection{LDP for Navier--Stokes equations with gradient noise}\label{ss:NS}
Next we specialize the result to the 2D Navier--Stokes equations  on an arbitrary open set $\Dom\subseteq \R^2$ (possibly unbounded), and we let the noise term contain a transport/gradient term. The large deviation principle is new even for the case $\Dom = \R^2$. Indeed, as explained in the introduction, previous results in the literature either contain a gap, or do not have gradient noise, or assume boundedness of the domain $\Dom$.  
 
For simplicity we only consider the case of It\^o noise. For details on Stratonovich noise, see \cite[App.\ A]{AV24SNS}. We follow the presentation of \cite[\S 7.3.4]{AVsurvey}.  

Consider the following Navier--Stokes system with no-slip condition on domain $\Dom$:
\begin{equation}
\label{eq:Navier_Stokes}
\left\{
\begin{aligned}
&\dd Y^{\varepsilon} =\big[\nu \Delta Y^{\varepsilon} -(Y^{\varepsilon}\cdot \nabla)Y^{\varepsilon} -\nabla P^{\varepsilon}  \big] \,\dd  t  +\sqrt{\varepsilon} \sum_{n\geq 1}\big[(b_{n}\cdot\nabla) Y^{\varepsilon} +g_n(\cdot,Y^{\varepsilon}) -\nabla \wt{P}_n^{\varepsilon}\big] \,\dd W_t^n,
\\
&\div \,Y^{\varepsilon}=0,
\\ &Y^{\varepsilon}=0 \quad \text{on $\partial \Dom$},
\\ &Y^{\varepsilon}(0,\cdot)=u_0.
\end{aligned}\right.
\end{equation}
Here, $Y^{\varepsilon}\coloneqq(Y^{\varepsilon,1},Y^{\varepsilon,2})\colon[0,\infty)\times \O\times \Dom\to \R^2$ denotes the unknown velocity field, $P^{\varepsilon},\wt{P}_n^{\varepsilon}\colon[0,\infty)\times \O\times \Dom\to \R$ the unknown pressures, $(W_t^n:t\geq 0)_{n\geq 1}$ a given sequence of independent standard Brownian motions and
\begin{equation*}
(b_{n}\cdot\nabla) u\coloneqq\Big(\sum_{j\in \{1,2\}} b_n^j \partial_j u^k\Big)_{k=1,2},
\qquad (u\cdot \nabla ) u\coloneqq\Big(\sum_{j\in \{1,2\}} u^j \partial_j u^k\Big)_{k=1,2}.
\end{equation*}

\begin{assumption}\label{ass:SNS}
Let $d=2$. Let $b^j = (b^j_{n})_{n\geq 1}:\R_+\times \Dom\to \ell^2$ be measurable and bounded and suppose that for every $T>0$ there exists a $\mu\in (0,\nu)$
such that for all $x\in \Dom$ and $t\in [0,T]$, 
\begin{align*}
\frac{1}{2}\sum_{n\geq 1} \sum_{i,j\in \{1,2\}} b_n^i(x) b_n^j(x) \xi_i \xi_j \leq \mu |\xi|^2 \ \ \text{ for all }\xi\in \R^d.
\end{align*}
Moreover, $g^1, g^2\colon\R_+\times\Dom\times\R^2\to \ell^2$ and for every $T>0$ there exists a constant $L_g$ such that
\begin{align*}
\|g^j(t,x,y) - g^j(t,x,y')\|_{\ell^2}&\leq L_g |y-y'| \\ 
\|g^j (t,x,y)\|_{\ell^2}& \leq L_g(1+|y|), \ \ \ x\in \Dom, y,y'\in \R^2, \ t\in [0,T], \ j\in \{1, 2\}.  
\end{align*}
\end{assumption}

As in \cite[\S 7.3.4]{AVsurvey}, we can use the Helmholtz projection $\P$  to rewrite \eqref{eq:Navier_Stokes} as \eqref{eq:abstractfluid}. To this end, let $\mathcal{U} = \ell^2$ with standard basis $(e_n)_{n\geq 1}$ and let
\[
H = \Ls^2(\Dom), \quad  V = \Hs^1_0(\Dom) = H^1_0(\Dom;\R^2)\cap \Ls^2(\Dom), \quad   V^* \coloneqq \Hs^{-1}(\Dom) = (\Hs^1_0(\Dom))^*,
\]
where $\Ls^2(\Dom)$ denotes the range of the Helmholtz projection in $L^2(\Dom;\R^2)$. 
By the divergence free condition, $(u\cdot \nabla)u=\div(u\otimes u)$, where $u\otimes u$ is the matrix with components $u_{j} u_k$.
Assuming $x\in \Ls^2(\Dom)$, after applying the Helmholtz projection $\pr$ to \eqref{eq:Navier_Stokes}, we can write \eqref{eq:Navier_Stokes} in the form \eqref{eq:abstractfluid} with \[A_0 = -\nu \pr\Delta,\quad \Phi(u,v) =- \pr\div[u\otimes v], \quad (B_0 u)e_n = \pr[(b_{n}\cdot\nabla) u], \quad G(u) e_n = \pr g_n(\cdot, u).\]

For $\psi \in L^2(0,T;\ell^2)$, consider the following skeleton equation on $\Dom$: 
\begin{equation}
\label{eq:Navier_Stokes-skeleton}
\left\{
\begin{aligned}
&\dd u^{\psi} =\big[\nu \Delta u^{\psi} - \P\div(u^{\psi}\otimes u^{\psi})\big] \,\dd  t
+\sum_{n\geq 1} \big(\P[(b_{n}\cdot\nabla) u^{\psi}] +\P g_n(\cdot,u^{\psi})\big)\psi_n,
\\ &u^{\psi}=0 \quad \text{on $\partial \Dom$},
\\ &u^{\psi}(0,\cdot)=u_0.
\end{aligned}\right.
\end{equation}

In \cite[\S 7.3.4]{AVsurvey} it is verified that  Assumption \ref{ass:bilinearFluid} is fulfilled for the above setting. Thus we obtain the next result immediately from Theorem \ref{thm:fluidabstract}.

\begin{theorem}[LDP for the 2D Navier--Stokes equations with transport noise]\label{thm:SNS}
Let $d=2$. Suppose that Assumption \ref{ass:SNS} holds,
Then for every $x\in \Ls^2(\Dom)$ and $\varepsilon\in (0,1]$, there exists a unique global solution $Y^{\varepsilon}\in L^2_{\rm loc}([0,\infty);\Hs^1_0(\Dom))\cap C([0,\infty);\Ls^2(\Dom))$ to \eqref{eq:Navier_Stokes}. Moreover, for every $T>0$, $(Y^\eps)$ satisfies the LDP on $\MR(0,T)\coloneqq L^2(0,T;\Hs^1_0(\Dom))\cap C([0,T];\Ls^2(\Dom))$ with rate function $I\colon \MR(0,T)\to [0,+\infty]$ given by
\[
I(z)=\frac{1}{2}\inf\Big\{\int_0^T\|\psi(s)\|_{\ell^2}^2 \dd s : \psi\in L^2(0,T;\ell^2), z=u^{\psi}\Big\},
\]
where $\inf\varnothing\coloneqq +\infty$ and $u^\psi$ is the strong solution to \eqref{eq:Navier_Stokes-skeleton}.
\end{theorem}

\appendix

\section{}\label{appendix}

For convenience we state some tools that are used repeatedly. To begin, let us state a direct consequence of Gronwall's inequality.

\begin{lemma}[Gronwall]\label{lem: gronwall consequence}
Let $T>0$ and let $F,G,H,K\colon [0,T]\to\R_+$ with $F$ and $G$ continuous, $K$ non-decreasing and $H\in L^1(0,T)$.
Suppose that $F(t)\leq -G(t)+K(t)+\int_0^t F(s)H(s)\dd s$ for all $t\in[0,T]$.
Then
\[
\sup_{t\in[0,T]}F(t)\vee \sup_{t\in[0,T]}G(t)\leq K(T)\exp[\|H\|_{L^1(0,T)}].
\]
\end{lemma}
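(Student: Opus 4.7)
The plan is to treat this as two separate applications of the classical Gronwall inequality, exploiting the sign information that $F, G \geq 0$ (since they take values in $\R_+$) and the monotonicity of $K$.

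First I would bound $F$. Since $G(t) \geq 0$, the hypothesis yields
\[
F(t) \leq K(t) + \int_0^t F(s) H(s)\dd s \leq K(T) + \int_0^t F(s) H(s)\dd s, \qquad t\in[0,T],
\]
using that $K$ is non-decreasing. The classical Gronwall inequality (with the constant upper bound $K(T)$) then gives
\[
F(t) \leq K(T) \exp\!\Bigl(\int_0^t H(s)\dd s\Bigr) \leq K(T)\exp[\|H\|_{L^1(0,T)}],
\]
and taking the supremum over $t\in[0,T]$ handles the $F$-part of the claim.

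To bound $G$, I would rearrange the hypothesis the other way: since $F(t)\geq 0$, the assumption gives
\[
G(t) \leq K(t) - F(t) + \int_0^t F(s) H(s)\dd s \leq K(T) + \int_0^t F(s) H(s)\dd s.
\]
Substituting the bound on $F$ already obtained, and using $\int_0^t H(s)\exp(\int_0^s H(r)\dd r)\dd s = \exp(\int_0^t H(r)\dd r)-1$, one finds
\[
G(t) \leq K(T) + K(T)\bigl(\exp[\|H\|_{L^1(0,t)}] - 1\bigr) \leq K(T)\exp[\|H\|_{L^1(0,T)}].
\]
Taking the supremum concludes the proof.

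There is no real obstacle here: the only subtlety is remembering to use the sign of $G$ (to drop it) for the $F$-estimate, and the sign of $F$ (to drop it) for the $G$-estimate, so that a single Gronwall argument for $F$ suffices and $G$ follows by direct substitution. Replacing $K(t)$ by the constant $K(T)$ via monotonicity also avoids any appearance of the sharper integral form of Gronwall.
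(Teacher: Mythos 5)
Your proof is correct. The paper itself states this lemma in the appendix without proof, treating it as a routine consequence of the classical Gronwall inequality, so there is no paper proof to compare against; your argument supplies exactly the standard derivation. The two key observations — dropping $G\ge 0$ to get an inequality for $F$ alone, then dropping $F\ge 0$ and substituting the $F$-bound to control $G$ — are the right ones, and the exact identity $\int_0^t H(s)\exp(\int_0^s H)\dd s=\exp(\int_0^t H)-1$ (valid for $H\in L^1$ by the fundamental theorem for absolutely continuous functions) makes the constant come out clean as $K(T)\exp[\|H\|_{L^1(0,T)}]$ rather than something cruder. One could equivalently prove the $G$-bound without the integral identity by simply writing $G(t)\le K(T)+\int_0^t F(s)H(s)\dd s\le F_{\sup}$-bound plus $K(T)$ and getting a slightly worse constant, but your sharper computation matches the stated constant exactly. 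No gaps.
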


The following special case of a chain rule from \cite{pardoux} is useful, since it applies to $L^2(0,T;V^*)+L^1(0,T;H)$-valued integrands.

\begin{lemma}{\cite[Lem.\ 2.2 p.\ 30]{pardoux}}
Let $(V,H,V^*)$ be a Gelfand triple of Hilbert spaces. Let $x\in H$, $u\in C([0,T];H)\cap L^2(0,T;V)$ and $v\in L^2(0,T;V^*)+L^1(0,T;H)$ be such that
\begin{equation}\label{eq: cond Ito}
u(t)=x+\int_0^t v(s)\dd s \text{ in }  V^*,\quad \text{for all }t\in[0,T].
\end{equation}
Then for all $t\in[0,T]$:
\begin{equation}\label{eq: Ito pardoux deterministic}
\|u(t)\|_H^2=\|x\|_H^2+2\int_0^t \<v(s),u(s)\?\dd s.
\end{equation}
\end{lemma}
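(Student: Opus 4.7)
\emph{Proof plan.} The approach is the classical time-mollification argument: one regularises in time to reduce to the smooth case, where the chain rule is an elementary identity, and then passes to the limit.

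First, decompose $v = v_1 + v_2$ with $v_1 \in L^2(0,T;V^*)$ and $v_2 \in L^1(0,T;H)$, and define the pointwise pairing
\[
\langle v(s), u(s)\rangle \coloneqq \langle v_1(s), u(s)\rangle_{V^*,V} + (v_2(s), u(s))_H.
\]
This is independent of the decomposition because under the Gelfand-triple identification $(w,u)_H = \langle w, u\rangle_{V^*,V}$ for $w \in H$ and $u \in V$. It lies in $L^1(0,T)$ by Cauchy--Schwarz for the first summand (via $u \in L^2(0,T;V)$) and by $u \in C([0,T];H)$ for the second. Evaluating \eqref{eq: cond Ito} at $t=0$ also gives $u(0)=x$ in $V^*$, hence in $H$.

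Next, extend $u$ to $\R$ by $u(t)=x$ for $t \leq 0$ and $u(t)=u(T)$ for $t \geq T$, and extend $v_1,v_2$ by zero off $[0,T]$; \eqref{eq: cond Ito} then holds on all of $\R$. Let $\rho_\eps$ be a symmetric smooth mollifier supported in $[-\eps,\eps]$, and set $u_\eps \coloneqq \rho_\eps * u$, $v_{i,\eps} \coloneqq \rho_\eps * v_i$. For $t \in (\eps, T-\eps)$, the convolution integral defining $u_\eps(t)$ only uses values $u(s)$ with $s \in [0,T]$, which lie in $V$ a.e.; hence $u_\eps \in C^\infty((\eps, T-\eps); V)$. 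Differentiating through the convolution and using $u'=v$ in the sense of $V^*$-valued distributions give $u_\eps'(t) = v_{1,\eps}(t) + v_{2,\eps}(t) =: v_\eps(t)$, with $v_\eps(t) \in V$ by the same interior argument.

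With $u_\eps \in C^\infty((\eps, T-\eps); V)$, the elementary chain rule applied to $t \mapsto (u_\eps(t), u_\eps(t))_H$ yields, for $\eps < s_1 < s_2 < T-\eps$,
\[
\|u_\eps(s_2)\|_H^2 - \|u_\eps(s_1)\|_H^2 = 2\int_{s_1}^{s_2} \langle v_\eps(r), u_\eps(r)\rangle \, dr,
\]
since $v_\eps(r) \in V \subset H$ identifies the $H$-inner product with the $V^*\times V$ pairing. I would then pass $\eps\downarrow 0$ using the standard convergences $u_\eps \to u$ in $C_{\mathrm{loc}}(\R;H)$ (from $u$ continuous in $H$ after extension), $u_\eps\to u$ in $L^2_{\mathrm{loc}}(V)$ on any interior interval, $v_{1,\eps}\to v_1$ in $L^2_{\mathrm{loc}}(V^*)$, and $v_{2,\eps}\to v_2$ in $L^1_{\mathrm{loc}}(H)$. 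The $v_1$-contribution on the right converges by $L^2(V^*)\times L^2(V)$ duality; the $v_2$-contribution by $L^1(H)\times L^\infty(H)$ duality. This yields the chain rule for all $0<s_1<s_2<T$. Finally, letting $s_1\downarrow 0$ with $s_2=t$: continuity of $u$ at $0$ in $H$ with $u(0)=x$ gives $\|u(s_1)\|_H^2 \to \|x\|_H^2$, and dominated convergence handles the right-hand integral via $\langle v,u\rangle \in L^1(0,T)$; the case $t=0$ is trivial.

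The principal technical point is verifying that the mollified function $u_\eps$ actually takes values in $V$ (not merely $V^*$ or $H$) on interior intervals and that its time derivative --- defined a priori only as the distributional derivative of a $V^*$-valued function --- is also $V$-valued there. This is what allows the pointwise chain rule on $\|u_\eps\|_H^2$ to apply cleanly and causes the $V^*\times V$ pairing $\langle v_\eps, u_\eps\rangle$ to reduce to the $H$-inner product before the passage to the limit. Once this is in place, the limit procedure is routine but requires invoking two distinct duality arguments for the two summands of $v$.
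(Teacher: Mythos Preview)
Your mollification argument is sound and constitutes a genuine self-contained proof. The paper, by contrast, does not argue from scratch: it simply verifies that $u$ is weakly differentiable with $u'=v\in L^2(0,T;V^*)+L^1(0,T;H)$ and that $u\colon[0,T]\to V^*$ is absolutely continuous, and then invokes \cite[Lem.~2.2, p.~30]{pardoux} directly to obtain $\frac{d}{dt}\|u(t)\|_H^2=2\langle v(t),u(t)\rangle$ a.e. In effect the paper treats the lemma as a black box from Pardoux's thesis, whereas you are essentially reproducing the proof of that black box. Your route is longer but has the advantage of being self-contained; the paper's route is a two-line reduction.

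Two small points in your write-up deserve tightening. First, the phrase ``$v_\eps(t)\in V$ by the same interior argument'' is not accurate as stated: mollifying $v=v_1+v_2$ directly only yields $v_{1,\eps}(t)\in V^*$ and $v_{2,\eps}(t)\in H$, not $V$. The correct reason $v_\eps(t)\in V$ on $(\eps,T-\eps)$ is that $v_\eps=u_\eps'=\int\rho_\eps'(\cdot-s)u(s)\,ds$, and this integral is $V$-valued because $u|_{[0,T]}\in L^2(0,T;V)$; you already have all the ingredients for this, just the pointer is off. Second, you handle $s_1\downarrow 0$ but not $s_2\uparrow T$, so the case $t=T$ is not covered; the same continuity-plus-dominated-convergence argument disposes of it.
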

\begin{proof}
Note that $u\in L^2(0,T;V)\cap L^\infty(0,T;H)$ and $v\in L^1(0,T;V^*)$. Thus by \eqref{eq: cond Ito}, $u$ is weakly differentiable with $u'=v$ a.e.\ on $[0,T]$, see \cite[Lem.\ 2.5.8]{HNVWvolume1}. Hence, $u'=v\in L^2(0,T;V^*)+L^1(0,T;H)$. Also, \eqref{eq: cond Ito} implies absolute continuity of $u:[0,T]\to V^*$.   Now \cite[Lem.\ 2.2  p.\ 30, $p=2$]{pardoux} gives $\frac{\dd}{\dd t}\|u(t)\|_H^2=2\<u'(t),u(t)\?=2\<v(t),u(t)\?$ a.e., proving  \eqref{eq: Ito pardoux deterministic}.
\end{proof}

Stochastic versions of the chain rule, or It\^o formula, are  also given in \cite{pardoux}. The following special case is suited for random, $L^2(0,T;V^*)+L^1(0,T;H)$-valued integrands.  We recall that the class of integrable processes for a $U$-cylindrical Brownian motion (Definition \ref{def: cylindrical BM}) is given by
\begin{align}\label{eq: def class integrable processes}
\mathcal{N}(0,T)\coloneqq& \Bigl\{\Phi\colon [0,T]\times \Om\to \UH: \Phi \text{ strongly progressively measurable, }\\[-0.15cm]
&\hspace{5.7cm}\P(\|{\Phi}\|_{L^2(0,T;\UH)}<\infty)=1\Bigr\}.\notag
\end{align}

\begin{lemma}{\cite[Th.\ 3.1 p.\ 57, Th.\ 3.3 p.\ 59]{pardoux}}\label{lem: Ito pardoux}
  Let $(V,H,V^*)$ be a Gelfand triple of Hilbert spaces and let $(\Om,\F,\P,(\F_t)_{t\in\R_+})$ be a filtered probability space. Suppose that
  \begin{enumerate}[label=\emph{(\roman*)},ref=\textup{(\roman*)}]
    \item $u\in L^0(\Om;L^2(0,T;V))$, $u_0\in L^0(\Om,\F_0,\P;H)$,
    \item $v\in L^0(\Om;L^1(0,T;H))+L^0(\Om;L^{2}(0,T;V^*))$, $v$ is adapted,
    \item $\Phi\in\mathcal{N}(0,T)$, $W$ is a $U$-cylindrical Brownian motion,
    \item a.s. for all $t\in[0,T]$: $u(t)=u_0+\int_0^t v(s)\dd s+\int_0^t \Phi(s)\dd W(s)$.
  \end{enumerate}
  Then, $u\in L^0(\Om;C([0,T];H))$ and a.s.\ for all  $t\in[0,T]$:
  \begin{equation}\label{eq: Ito pardoux}
  \|u(t)\|_H^2 =\|u_0\|_H^2+2\int_0^t\<v(s),u(s)\?\dd s+2\int_0^t\<u(s),\Phi(s) \dd W(s)\?+\int_0^t \nn \Phi(s)\nn_H^2 \dd s.
  \end{equation}
\end{lemma}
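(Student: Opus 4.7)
The plan is to deduce the result from Pardoux's two stochastic chain rules by a localization–approximation argument that treats the $L^1(0,T;H)$ and $L^2(0,T;V^*)$ components of the drift separately.

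First I would set up the framework. Decompose $v = v_1 + v_2$ with $v_1$ adapted and in $L^0(\Om;L^2(0,T;V^*))$, and $v_2$ adapted and in $L^0(\Om;L^1(0,T;H))$; adaptedness of the decomposition can be arranged by truncation on adapted level sets. Introduce a localizing sequence of stopping times
\begin{equation*}
\tau_n := \inf\Bigl\{t \in [0,T] : \|u_0\|_H + \|u\|_{L^2(0,t;V)} + \|v_1\|_{L^2(0,t;V^*)} + \|v_2\|_{L^1(0,t;H)} + \|\Phi\|_{L^2(0,t;\UH)} \geq n\Bigr\} \wedge T,
\end{equation*}
which reduces the proof to the case in which all processes have deterministic bounds. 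The stochastic integral with respect to the $U$-cylindrical Brownian motion $W$ coincides with Pardoux's stochastic integral against the associated $\R^\infty$-Brownian motion (Proposition \ref{prop: equiv cyl BM}), which allows us to apply Pardoux's results in our setting.

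Second, approximate $v_2$ by adapted processes $v_2^k$ bounded in $H$ (e.g.\ by truncation). Since $H \into V^*$ continuously, $v_2^k \in L^\infty(0,T;H) \subset L^2(0,T;V^*)$, so $v_1 + v_2^k \in L^2(0,T;V^*)$. Define
\begin{equation*}
u^k(t) := u_0 + \int_0^t \bigl(v_1(s) + v_2^k(s)\bigr)\dd s + \int_0^t \Phi(s)\dd W(s).
\end{equation*}
Pardoux's Theorem 3.1 applies to $u^k$ and yields $u^k \in L^0(\Om;C([0,T];H))$ together with the Itô identity
\begin{equation*}
\|u^k(t)\|_H^2 = \|u_0\|_H^2 + 2\int_0^t \langle v_1 + v_2^k, u^k\rangle \dd s + 2\int_0^t \langle u^k, \Phi\dd W\rangle + \int_0^t \nn\Phi\nn_H^2\dd s.
\end{equation*}
Applying the same formula to $u^k - u^j$, the stochastic integral cancels and the drift pairing reduces to $2\int_0^t (v_2^k - v_2^j, u^k - u^j)_H\dd s$, which is bounded by $2\|v_2^k - v_2^j\|_{L^1(0,t;H)}\cdot\|u^k - u^j\|_{C([0,t];H)}$. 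A quadratic absorption then yields $\|u^k - u^j\|_{C([0,T];H)} \leq C\|v_2^k - v_2^j\|_{L^1(0,T;H)} \to 0$.

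Third, pass to the limit $k \to \infty$. Cauchyness in $C([0,T];H)$ gives a limit which must agree with $u$ in $V^*$, so $u \in L^0(\Om;C([0,T];H))$. Then $\int_0^t \langle v_2^k, u^k\rangle \dd s = \int_0^t (v_2^k, u^k)_H\dd s$ converges to $\int_0^t (v_2, u)_H\dd s = \int_0^t \langle v_2, u\rangle\dd s$ uniformly in $t$ by uniform $H$-convergence of $u^k$ combined with $L^1(0,T;H)$-convergence of $v_2^k$; the $v_1$-pairing and $\Phi$-terms are independent of $k$. Undoing the localization as $\tau_n \uparrow T$ gives the claimed identity a.s. on $[0,T]$.

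The main obstacle is the passage to the limit in $C([0,T];H)$, which requires the Itô formula for $u^k - u^j$ with drift controlled only in the $L^1(H)$-norm. This is precisely the content of Pardoux's Theorem 3.3, so an equivalent route is to invoke that theorem directly rather than re-derive it via the $L^2(V^*)$ version and the above approximation scheme.
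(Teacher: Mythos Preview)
The paper does not prove this lemma; it is stated as a direct citation of Pardoux's Th.~3.1 and Th.~3.3, with no further argument. Your closing remark---that one may simply invoke Th.~3.3---is therefore exactly what the paper does.

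Your main reconstruction, however, has a genuine gap. Applying Pardoux's Th.~3.1 to $u^k$ requires $u^k\in L^0(\Om;L^2(0,T;V))$ as a \emph{hypothesis}, not a conclusion. But $u^k-u=\int_0^\cdot(v_2^k-v_2)\dd s$, and since $v_2$ is only $L^1(0,T;H)$-valued, this primitive lies in $C([0,T];H)$ with no reason to belong to $L^2(0,T;V)$. Hence the $V$-regularity of $u$ does not transfer to $u^k$, and the invocation of Th.~3.1 is unjustified. The same defect obstructs the limit passage in your third step: the pairing $\int_0^t\langle v_1,u^k\rangle\dd s$ is \emph{not} independent of $k$ (contrary to what you write), and its convergence would require $u^k\to u$ in $L^2(0,T;V)$, since $v_1\in L^2(0,T;V^*)$; convergence in $C([0,T];H)$ does not suffice. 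This is precisely why Pardoux proves Th.~3.3 as a separate result rather than deducing it from Th.~3.1 by approximation.
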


Finally, we relate the $U$-cylindrical Brownian motion $W$ of Definition \ref{def: cylindrical BM} to the $\R^\infty$-Brownian motion $\tilde{W}$ of Definition \ref{def: sequence independent BM}, as well as their stochastic integrals constructed in \cite{NVW15} and \cite{liurockner15}, respectively.

An $\R^\infty$-Brownian motion $\tilde{W}=((\beta_k)_{k\in\N},(e_k)_{k\in\N})$ in $U$ corresponds to a Wiener process $\tilde{W}_1$ in a larger space $U_1$, with trace class covariance. That is, for any Hilbert-Schmidt embedding $J\colon U\into U_1$, the $U_1$-valued process given by
\begin{equation}\label{eq: Q-Wiener appendix}
\tilde{W}_1(t)\coloneqq \sum_{k=1}^{\infty}\beta_k(t)J e_k, \qquad t\in[0,T],
\end{equation}
defines a $Q_1$-Wiener process on $U_1$, with $Q_1\coloneqq JJ^*\in\mathcal{L}(U_1,U_1)$ nonnegative definite, symmetric and of trace class \cite[Prop.\ 2.5.2]{liurockner15}.

It is well-known that $\mathcal{N}(0,T)$ from  \eqref{eq: def class integrable processes} is the class of integrable processes for both $\tilde{W}$ and $W$,   see \cite[p.\ 52, p.\ 53]{liurockner15}, \cite[Prop.\ 2.13]{AV22nonlinear1} and the  proof in \cite[p.\ 306, \S 5.4 $(p=0)$]{NVW15}.
The next proposition relates the stochastic integrals corresponding to $\tilde{W}$ and $W$.

\begin{proposition}\label{prop: equiv cyl BM}
For any $U$-cylindrical Brownian motion $W\in\mathcal{L}(L^2(\R_+;U),L^2(\Omega))$ and any orthonormal basis $(e_k)_{k\in\N}$ of $U$, there exists an $\R^\infty$-Brownian motion $\tilde{W}=((\beta_k)_{k\in\N},(e_k)_{k\in\N})$ with
\begin{equation}\label{eq: assump beta_k}
W(\one_{(0,t]}\otimes e_k)=\beta_k(t) \text{\quad in }L^2(\Om),\quad\text{ for all } k\in\N \text{ and } t\in \R_+.
\end{equation}
The sequence $(\beta_k)_{k\in\N}$ in $\tilde{W}$ is unique up to indistinguishability.

Reversely, given an $\R^\infty$-Brownian motion $\tilde{W}=((\beta_k)_{k\in\N},(e_k)_{k\in\N})$, there exists a unique $U$-cylindrical Brownian motion $W\in\mathcal{L}(L^2(\R_+;U),L^2(\Omega))$ that satisfies \eqref{eq: assump beta_k}.

If \eqref{eq: assump beta_k} holds, then for  any $\Phi\in \mathcal{N}(0,T)$ and $t\in[0,T]$, we have $\P$-a.s.\ in $C([0,T];H)$:
\begin{equation}\label{eq: identity stoch integrals defs}
\int_0^t \Phi(s)\dd W(s)=\int_0^t \Phi(s)\circ J^{-1}\dd \tilde{W}_1(s)\eqqcolon \int_0^t \Phi(s)\dd \tilde{W}(s),\qquad  t\in[0,T],
\end{equation}
with $\tilde{W}_1$ as in \eqref{eq: Q-Wiener appendix}. Here, the integral on the left-hand side is the one constructed in \cite{NVW15} and the middle and right integral are those constructed in \cite[\S 2.3, \S 2.5]{liurockner15}.
\end{proposition}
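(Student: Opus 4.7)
The plan is to address each of the three assertions in turn. Parts (a) and (b) (existence and uniqueness of $\tilde{W}$ from $W$ and vice versa) are Gaussian-process computations on a dense subspace of $L^2(\R_+;U)$, while part (c) (the integral identity) is handled by a density-and-localization argument that reduces to elementary integrands.

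For the forward direction, I would set $\beta_k(t) := W(\one_{(0,t]} \otimes e_k)$. By \ref{it:bm1} this is a centred Gaussian variable with $\E[\beta_k(t)\beta_j(s)] = \langle \one_{(0,t]} \otimes e_k, \one_{(0,s]} \otimes e_j\rangle = (s\wedge t)\delta_{kj}$; in particular each $\beta_k$ has the covariance of a standard Brownian motion. Item \ref{it:bm2} gives adaptedness of $\beta_k$ and \ref{it:bm3} gives independence of $\beta_k(t)-\beta_k(s)$ from $\F_s$. Because $\{W(f):f\in L^2(\R_+;U)\}$ is a jointly Gaussian family, so is $\{\beta_k(t):k\in\N,t\in\R_+\}$, and the vanishing cross-covariances for $k\neq j$ force joint independence of the processes $(\beta_k)_{k\in\N}$. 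Kolmogorov's continuity criterion, applied to each Gaussian $\beta_k$ (using $\E|\beta_k(t)-\beta_k(s)|^{4}=3|t-s|^{2}$), yields continuous versions; a countable union of null sets handles all $k$ simultaneously, giving the required $\tilde{W}$. Uniqueness up to indistinguishability follows since any two continuous versions agree for each fixed $t$ a.s., hence on a common countable dense set, hence on $\R_+$ by continuity.

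For the reverse direction, I would define $W$ first on the dense subspace $D \subset L^2(\R_+;U)$ spanned by $\{\one_{(a,b]}\otimes e_k\}$ via $W(\one_{(a,b]}\otimes e_k):=\beta_k(b)-\beta_k(a)$, extended linearly. Independence of $(\beta_k)_{k\in\N}$ across $k$, together with orthogonality of Brownian increments on disjoint intervals, gives $\|Wf\|_{L^2(\Om)}=\|f\|_{L^2(\R_+;U)}$ on $D$, so $W$ extends uniquely to an element of $\mathcal{L}(L^2(\R_+;U),L^2(\Om))$. Properties \ref{it:bm1}--\ref{it:bm3} of Definition \ref{def: cylindrical BM} hold on $D$ by direct computation (Gaussianity and independence are preserved by $L^2$-limits of Gaussian families) and extend to $L^2(\R_+;U)$ by approximation. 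Uniqueness of $W$ is immediate from density of $D$.

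For the integral identity \eqref{eq: identity stoch integrals defs}, I would exploit that both integrals are constructed by isometric extension from elementary processes followed by localization to $\mathcal{N}(0,T)$. It suffices to verify the equality on elementary integrands of the form $\Phi=\one_A\one_{(s,t]}(e_k\otimes h)$ with $A\in\F_s$ and $h\in H$, where the NVW integral $\int_0^T\Phi\,\dd W$ evaluates to $\one_A(\beta_k(t)-\beta_k(s))h$ by \eqref{eq: assump beta_k} and the construction in \cite[\S 5.4]{NVW15}, and the Liu--R\"ockner integral $\int_0^T \Phi\circ J^{-1}\,\dd\tilde{W}_1$ evaluates to the same expression because $(\Phi\circ J^{-1})(Je_k)=\Phi e_k=h$. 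Linearity and density of such elementary processes in $L^2((0,T)\times\Om;\UH)$, combined with the matching It\^o isometries on both sides, propagate the identity to all $L^2$-integrands, and a standard localizing sequence of stopping times extends it to $\mathcal{N}(0,T)$. The main technical point lies here: one must check that the identification $\Phi\mapsto\Phi\circ J^{-1}$ from $\mathcal{L}_2(U,H)$ into $\mathcal{L}_2(Q_1^{1/2}(U_1),H)$ is isometric, which relies on $Q_1^{1/2}(U_1)=J(U)$ carrying the inner product pulled back from $U$ via $J$ (see \cite[Prop. 2.5.2]{liurockner15}); once this is in place, the two It\^o isometries match term by term and the density/localization argument closes.
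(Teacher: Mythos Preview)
The paper states this proposition in the appendix without proof, treating it as a standard fact. Your argument is correct and is exactly the sort of verification one would expect: defining $\beta_k(t):=W(\one_{(0,t]}\otimes e_k)$ and checking the Brownian-motion properties via the Gaussian structure in the forward direction, building $W$ by isometric extension from the span of $\{\one_{(a,b]}\otimes e_k\}$ in the reverse direction, and establishing \eqref{eq: identity stoch integrals defs} first on elementary integrands and then by density plus localization. The only point worth flagging is that, since the paper works with Hilbert $H$, the NVW integral of \cite{NVW15} reduces to the ordinary It\^o isometry, so your matching of isometries goes through without needing the $\gamma$-radonifying machinery; your remark that $\Phi\mapsto\Phi\circ J^{-1}$ is an isometry from $\mathcal{L}_2(U,H)$ onto $\mathcal{L}_2(Q_1^{1/2}(U_1),H)$ (via \cite[Prop.~2.5.2]{liurockner15}) is precisely what makes the two constructions agree.
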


\printbibliography%[heading=bibintoc]

\end{document}